\newcolumntype{C}[1]{>{\centering\arraybackslash}m{#1}}
\newcommand{\RN}[1]{%
	\textup{\uppercase\expandafter{\romannumeral#1}}%
}
\def\bfs{\boldsymbol}
\def\wh{\widehat}
\def\Im{ \mathrm{Im}}
\DeclareMathOperator{\Res}{Res}
\def\C{\mathbb{C}}
\def\R{\mathbb{R}}
\newcommand{\supp}{\operatorname{supp}}
\newcommand{\Li}{\operatorname{Li}}
\newcommand{\pv}{\operatorname{p.v.}}
\newcommand{\qbinom}[2]{{\begin{bmatrix}#1\\#2\end{bmatrix}}_q}
\newcommand{\floor}[1]{\left\lfloor#1\right\rfloor}
\newcommand{\Cr}{\mathrm{cr}}
\theoremstyle{plain}
\newtheorem{thm}{Theorem}[section]
\newtheorem{lem}[thm]{Lemma}
\newtheorem{prop}[thm]{Proposition}
\newtheorem{rhp}{RHP}[section]
\theoremstyle{remark}
\newtheorem{rem}{Remark}
\newtcolorbox{defn}[1]{breakable,%enhanced jigsaw,
colbacktitle=gray!50!white, fonttitle=\bfseries, coltitle=black, title=Definition: {#1}}
\newcommand{\abs}[1]{\lvert#1\rvert}
\numberwithin{equation}{section}
\begin{document}

\title[$q$-deformation of the Marchenko-Pastur law]{$q$-deformation of the Marchenko-Pastur law}
%%%%%%%%%%%%%%%%%%%%%%%%%%%%% author %%%%%%%%%%%%%%%%%%%%%%%%%%%%
\author{Sung-Soo Byun}
\address{Department of Mathematical Sciences and Research Institute of Mathematics, Seoul National University, Seoul 151-747, Republic of Korea}
\email{sungsoobyun@snu.ac.kr}

\author{Yeong-Gwang Jung}
\address{Department of Mathematical Sciences, Seoul National University, Seoul 151-747, Republic of Korea}
\email{wollow21@snu.ac.kr} 

\author{Guido Mazzuca}
\address{Department of mathematics, Tulane University, 6823 St Charles Ave, New Orleans, LA 70118, USA} 
\email{gmazzuca@tulane.edu}
%%%%%%%%%%%%%%%%%%%%%%%%%%%%% author %%%%%%%%%%%%%%%%%%%%%%%%%%%%

\begin{abstract}
We study a $q$-deformed random unitary ensemble associated with the little-$q$ Laguerre weight, which provides a discrete analogue of the classical Laguerre unitary ensemble. In the double scaling regime $q=e^{-\lambda/N}$, where $N$ is the system size and $\lambda \ge 0$, we derive the limiting spectral distribution as $N\to \infty$, which yields a $q$-deformation of the Marchenko-Pastur law. The limiting density undergoes a phase transition at an explicitly determined critical value $\lambda_c$: for $\lambda<\lambda_c$, the support consists of a single band region, whereas for $\lambda>\lambda_c$ an additional saturated region emerges adjacent to the band region. 
Our derivation of the limiting distribution is based on three complementary approaches: the method of moments, the analysis of a constrained equilibrium problem, and the asymptotic zero distribution of orthogonal polynomials. As a consequence, we establish the convergence of the empirical measure as well as a large deviation principle. In addition, we derive closed-form expressions for the spectral moments using the combinatorial structure of orthogonal polynomials, and obtain large-$N$ expansions for these moments.
\end{abstract}

\maketitle

%\tableofcontents

\section{Introduction and main results}

The Marchenko-Pastur law is one of the fundamental distributions of random matrix theory, governing the universal limiting behaviour of eigenvalues of Wishart (or sample covariance) matrices \cite{MP67}. It forms a one-parameter family indexed by $c \ge 0$, which encodes the aspect ratio of the matrix, with density given by
\begin{equation} \label{def of MP law}
\rho _{\mathrm {MP}}^{(c)}(x) := \frac{1}{2\pi} \frac{ \sqrt{ (x_+-x)(x-x_-) } }{ x } \cdot \mathbbm{1}_{ (x_-,x_+) }(x), \qquad x_\pm = ( \sqrt{c+1} \pm 1 )^2. 
\end{equation}
Beyond its classical formulation, a growing body of work across integrable probability, representation theory, and mathematical physics has highlighted the remarkable role of \emph{$q$-deformations} as a unifying mechanism interpolating between discrete and continuous structures; see e.g. \cite{Jo01,BGG17,Ol21}. Here, the parameter $q \in (0,1)$ is referred to as the quantisation parameter, with the classical regime recovered in the limit $q \to 1$. Such $q$-deformations often reveal hidden algebraic symmetries, connect probabilistic models with exactly solvable frameworks, and generate families of limiting objects that retain rich integrability while deviating substantially from their classical counterparts. From this perspective, understanding how fundamental spectral laws in random matrix theory behave under $q$-deformation is a natural and compelling problem. Nevertheless, despite significant recent developments on $q$-analogues of classical ensembles 
\cite{BF25a,FLSY23,Fo21,MPS20,BFO24,BJO25,HMO25,BO24,LSYF25,CM25,LWYZ25,FL20}, a $q$-deformation of the Marchenko-Pastur law has remained largely unexplored.

In this work, we aim to contribute this direction by providing a unified and comprehensive understanding of the $q$-deformed Marchenko-Pastur law. In our main result Theorem~\ref{Thm:limiting density in growth regime} below, we analyse a natural $q$-deformed Laguerre unitary ensemble (LUE) and identify its limiting spectral distribution through three complementary approaches: 
\begin{itemize}
    \item[(A)] \textbf{asymptotic behaviour of the spectral moments};
    \smallskip 
    \item[(B)] \textbf{equilibrium measure of logarithmic energy with upper  constraint};  
    \smallskip 
    \item[(C)] \textbf{limiting empirical zeros of orthogonal polynomials}. 
\end{itemize}  
Each perspective captures a different structural aspect of the ensemble: moment methods emphasise combinatorial interpretations of spectral moments, potential theoretic techniques reveal the underlying variational principles together with the framework of the large deviation theory, and the asymptotics of orthogonal polynomial zeros offer an analytic route to the limiting spectral distribution. 

Before introducing our model and methods in detail, we first present the explicit form of the $q$-deformed Marchenko-Pastur law, which serves as the central object of this work. In addition to the parameter $c \ge 0$, we introduce a parameter $\lambda \ge 0$ that encodes the quantisation via $q = e^{-\lambda/N}$, where $N$ denotes the system size. 

\begin{defn}{$q$-deformed Marchenko-Pastur law ($q=e^{-\lambda/N}$)} 
Let $c$ and $\lambda$ be given non-negative real numbers. Let $\lambda_c$ be a unique positive number satisfying
    \begin{equation} \label{eqn:def of lambdac}
        e^{-\lambda}( 1+e^{-c\lambda} )=1.
    \end{equation} 
Write $\mathsf{s}=e^{-\lambda}$ and define 
\begin{equation} \label{def of endpts a and b}
a=\Big(\sqrt{ \mathsf{s}(1-\mathsf{s}^{c+1}) } -\sqrt{ \mathsf{s}^{c+1}(1-\mathsf{s}) } \Big)^2, \qquad b=\Big(\sqrt{ \mathsf{s}(1-\mathsf{s}^{c+1}) } +\sqrt{ \mathsf{s}^{c+1}(1-\mathsf{s}) } \Big)^2. 
\end{equation}  
Then the \textit{$q$-deformed Marchenko-Pastur law} is defined by
\begin{equation}\label{eqn:def of limiting density v0}
            \rho^{(c)}(x) :=\frac{2}{\pi\lambda x}\arctan\sqrt{  \frac{ x- \mathsf{s} -\mathsf{s}^{c+1} +2\mathsf{s}  \sqrt{\mathsf{s}^{c}(1-x)}  }{ \mathsf{s} +\mathsf{s}^{c+1} +2 \mathsf{s}  \sqrt{\mathsf{s}^{c}(1-x)} -x  }    }\mathbbm{1}_{(a,b)}(x) +\begin{cases}
                0& \textup{if } \lambda \leq \lambda_c , \smallskip \\
                \displaystyle\frac{1}{\lambda x}\mathbbm{1}_{(b,1)}(x) & \textup{if } \lambda > \lambda_c. 
            \end{cases} 
    \end{equation}
Equivalently,  
\begin{equation} \label{eqn:def of limiting density}
   \rho^{(c)}(x) = 
  \begin{cases}
 \displaystyle  \frac{2}{\pi\lambda x}\arctan \bigg(  \frac{ \sqrt{(x-a)(b-x)}  }{ ( \sqrt{1-x}+\sqrt{1-a} )(\sqrt{1-x}+\sqrt{1-b}) } \bigg)  \mathbbm{1}_{(a,b)}(x)&\textup{if } \lambda\le \lambda_c, 
 \medskip 
 \\
 \displaystyle \frac{2}{\pi\lambda x}\arctan \bigg(  \sqrt{ \frac{x-a }{b-x} } \frac{\sqrt{1-x}+\sqrt{1-b} }{ \sqrt{1-x}+\sqrt{1-a} }  \bigg)  \mathbbm{1}_{(a,b)}(x) + \frac{1}{\lambda x}\mathbbm{1}_{(b,1)}(x)   &\textup{if } \lambda>\lambda_c. 
  \end{cases}  
\end{equation}   
\end{defn}

It is straightforward to check that, after the rescaling  $   \widehat{\rho}^{(c)}(x):= \lambda \rho^{(c)}(\lambda x)$, the classical Marchenko-Pastur law \eqref{def of MP law} is recovered in the 
limit \(q \to 1\), equivalently \(\lambda \to 0\):
 \begin{equation} \label{qMP to MP}
        \lim_{\lambda\rightarrow0}\widehat{\rho}^{(c)}(x)=\rho _{\mathrm {MP}}^{(c)}(x).
\end{equation}
In addition, one observes a non-trivial phase transition at a critical value $\lambda_c$; see Figure~\ref{Fig_limiting density} for the graphs of $\rho^{(c)}$.  

\subsection{LUE and Marchenko-Pastur law revisited}

We begin by revisiting the classical developments that lead to the 
Marchenko-Pastur law. By definition, the LUE or the Wishart matrix is given by $W = X^{*}X$ \cite[Chapter 3]{Fo10}, where $X$ is an $M \times N$ ($M \geq N$) random matrix with independent standard complex Gaussian entries \cite{BF25}. 
The joint probability distribution function of eigenvalues $\mathbf{x}=\{x_j\}_{j=1}^N$ of $W$ is given by
\begin{equation}\label{def of joint eigenvalue of unitary ensemble}
    d\mathbf{P}(\mathbf{x})= \frac{1}{Z_{N}} \prod_{1\leq j<k\leq N}(x_{j}-x_{k})^{2}\prod_{j=1}^{N}w(x_j) \,dx_{j}. 
\end{equation}
Here, $Z_N$ is a normalisation constant and the weight function $w \equiv w^{ \rm (L) }$ is given by 
\begin{equation}\label{def of Laguerre weight}
    w^{(\mathrm L)}(x):=x^{\alpha}e^{-x}\mathbbm{1}_{[0,\infty)}(x), \qquad \alpha=M-N. 
\end{equation} 

When analysing the large-$N$ asymptotic behaviour of the LUE, it is necessary to specify the scaling of the parameter $\alpha$. 
A standard and sufficiently general choice is to assume a linear scaling of the form
\begin{equation}\label{def of alpha scaling}
    \alpha = cN + d, \qquad c \geq 0,\quad d \in \mathbb{N}.
\end{equation} 
Then as $N \to \infty$, the empirical measure of $\{x_j/N\}_{j=1}^N$ converges to the Marchenko-Pastur law \eqref{def of MP law}. 
We outline how the three approaches mentioned above can be adapted to derive the Marchenko-Pastur law~\eqref{def of MP law}.

In the first approach, we consider the spectral moments, a method that goes back to Wigner’s seminal work \cite{Wi55} deriving the semicircle law. (See also \cite{CMOS19,MS13,GGR21,WF14,CDO21,ABGS21a,CCO20,FR21} for more recent development.) With $p$ a non-negative integer, the $p$-th spectral moment of a random matrix is defined by
\begin{equation} \label{def of spectral moments mNp}
    m_{N,p}:= \mathbb{E}\Big[\sum_{j=1}^{N}x_{j}^{p}\Big], 
\end{equation}
where the expectation is taken with respect to the distribution \eqref{def of joint eigenvalue of unitary ensemble}. This definition applies to general random matrix ensembles; for the LUE we add the superscript $(\mathrm L)$ to indicate the corresponding moments. 
In this case, an exact evaluation of $ m_{N,p}^{ \rm (L) } $ is available in the literature; see e.g. \cite[Theorem 2.5]{HSS92} and \cite[Proposition 9.1]{HT03}. (See also \eqref{evaluation of LUE moments} and \eqref{eqn:LUE moment 2} below.) Furthermore, when $\alpha$ is scaled according to \eqref{def of alpha scaling}, it is well known that  
\begin{equation} \label{leading order conv for LUE moment}
    \lim_{ N \to \infty }\frac{1}{N^{p+1}} m_{N,p}^{ \rm (L) } =  \sum_{j=0}^{p}\mathrm{N}(p,j)(1+c)^{j}, \qquad \mathrm{N}(p,j):=\frac{1}{p}\binom{p}{j}\binom{p}{j-1}. 
\end{equation}
Here, $\mathrm{N}(p,j)$ is the Narayana number.
The Marchenko-Pastur distribution is uniquely determined by its moments, 
and we have 
\begin{equation}
    \int_{x_-}^{x_+} x^p\,  \rho _{\mathrm {MP}}^{(c)}(x)\,dx =  \sum_{j=0}^{p}\mathrm{N}(p,j)(1+c)^{j}
\end{equation}
from which the Marchenko-Pastur law \eqref{def of MP law} follows.

The second approach originates from the viewpoint of statistical physics.  
For the normalised eigenvalues $\{x_j/N\}_{j=1}^N$, the Gibbs measure \eqref{def of joint eigenvalue of unitary ensemble} is proportional to $\exp(-\frac{\beta}{2}H_N)$, where $\beta=2$ and 
\begin{equation}
H_N := \sum_{ 1\le j \not = k \le N } \log \frac{1}{|x_j-x_k|} +N \sum_{j=1}^N V(x_j),  \qquad  V(x):= x - c \log x. 
\end{equation}
In the large-$N$ limit, the continuum version of this Hamiltonian corresponds to the energy functional
\begin{equation} \label{def of I_V LUE}
I_V[\mu]:= \iint_{ \R_+^2 } \log \frac{1}{|x-y|}\,\mu(dx)\,\mu(dy) + \int_{ \R_+ } V(x) \,\mu(dx). 
\end{equation}
From the perspective of statistical physics, one naturally expects that the particles arrange themselves so as to minimise their Hamiltonian. In fact, by the standard theory of log-gases and large deviation principles (see e.g. \cite{Jo98,Fo10,Se24,BG97,BF25b}), the limiting global distribution is characterised as a unique minimiser of $I_V$. Then by applying the classical methods in logarithmic potential theory \cite{ST97}, one finds that this minimiser is precisely the Marchenko-Pastur distribution.

The third approach can be regarded as the $\beta=\infty$ extremal counterpart of the second method, but formulated at finite $N$. In the log-gas formulation \eqref{def of joint eigenvalue of unitary ensemble}, it is well known that the global limiting distribution does not depend on the (fixed) inverse temperature $\beta$. The idea is to consider the opposite extreme, namely the freezing low-temperature regime, where the configuration of the minimiser of the discrete Hamiltonian coincides with the zeros of the associated orthogonal polynomial.  
For the LUE, the associated orthogonal polynomials are the generalised Laguerre polynomials $L_n^{(\alpha)}$, defined through the three-term recurrence relation
\begin{equation}\label{eqn:three term of laguerre}
    (n+1)L_{n+1}^{(\alpha)}(x)-(2n+\alpha +1-x)L_{n}^{(\alpha)}(x)+(n+\alpha)L_{n-1}^{(\alpha)}(x)=0,
\end{equation} 
with initial conditions $L_0^{(\alpha)}(x)=1$ and $L_1^{(\alpha)}(x)=1+\alpha - x$.  
From the theory of orthogonal polynomials, the Marchenko-Pastur law \eqref{def of MP law} can then be obtained as the limiting empirical distribution of the zeros of the rescaled Laguerre polynomial $L_N^{(\alpha)}(x/N)$.

\subsection{Main results}

We first introduce the $q$-deformation of the LUE. In the $q$-deformed framework, the basic idea is that the integer $n$ is replaced by the $q$-integer 
\begin{equation}
[n]_q=\frac{1-q^n}{1-q}, \qquad q \in (0,1). 
\end{equation}
Building on this, the other notions such as $q$-binomials or $q$-Pochhammer symbols naturally arise. For instance, the standard notations include 
\begin{equation}
 \qbinom{n}{m}= \frac{[n]_q!}{ [m]_q![n-m]_q! } , \qquad   (x;q)_{n}=\prod_{j=0}^{n-1}(1-xq^{j}),\qquad (x_{1},\cdots,x_{k};q)_{\infty}=\prod_{j=0}^{\infty}  \prod_{l=1}^k (1-x_l q^j).  
\end{equation} 
Using this, we define the little $q$-Laguerre weight (see e.g. \cite[Eq.~(2.52)]{FLSY23}) by
\begin{equation} \label{def of qLUE weight}
    w^{(\mathrm{qL})}(x;q)=x^{\alpha}(qx;q)_{\infty},\qquad \alpha >-1.
\end{equation}
Notice here that by \cite[Eq.~(2.16)]{BF25a}, we have 
\begin{equation} \label{q1 limit of weight}
\lim_{q \to 1} (1-q)^{-\alpha} w^{(\mathrm{qL})}((1-q)x;q) =  w^{(\mathrm{L})}(x). 
\end{equation}
Next, we consider the system $\bfs{x}=\{x_j\}_{j=1}^N$ supported on the one-sided $q$-lattice $\{1,q,q^2,\dots \}$ having the joint distribution of the form (see e.g. \cite{Ol21,BFO24})
\begin{equation} \label{def of jpdf for qL}
 d\mathcal{P}(\bfs{x}) = \frac{1}{\mathcal{Z}_N}\prod_{1\le j<k \le N}(x_j-x_k)^2 \prod_{j=1}^N \omega^{\rm (qL)}(x_j), 
\end{equation}
where $\mathcal{Z}_N$ is the normalisation constant. We call the ensemble \eqref{def of jpdf for qL} as the \emph{(little) $q$-LUE}. By \eqref{q1 limit of weight}, the renormalised system $x_j \mapsto (1-q)x_j$ recovers the classical LUE as $ q \to 1$.

Following the general notion \eqref{def of spectral moments mNp}, we define  
\begin{equation} \label{def of qLUE moments}
m_{N,p}^{ \rm (qL) } := \mathbb{E} \Big[ \sum_{j=1}^N x_j^p \Big]
\end{equation}
for the spectral moments of the $q$-LUE. Here, the expectation is taken with respect to \eqref{def of jpdf for qL}.
Unlike the continuum case $q=1$, where the spectral moments can be derived using well-known properties of Laguerre polynomials together with complex-analytic techniques, the $q$-deformed setting lacks comparable analytic tools for obtaining closed-form expressions (although the discrete loop equation approach of \cite{BGG17} may be adaptable to this setting; cf.~\cite{WF14} for the continuum case).
In recent years, significant progress has been made for other $q$-deformed 
ensembles, with algebraic combinatorial methods -- such as symmetric function theory \cite{BF25a,FLSY23,Fo21,MPS20} and the combinatorics of orthogonal polynomials \cite{BFO24,BJO25} -- yielding explicit summation formulas. In particular, the Flajolet--Viennot theory based on techniques in \cite{BFO24,BJO25} produces positive formulas well suited for large-$N$ asymptotics.  
In Theorem~\ref{Thm:Spectralmoment} below, we establish analogous results for the $q$-LUE, where the underlying combinatorial structure turns out to be richer than in \cite{BFO24,BJO25}.

Our first results establish a closed formula for the spectral moments and their large-$N$ asymptotic behaviour. For the latter, we recall that for $a,b>0$, the regularised incomplete beta function is defined by
\begin{equation} \label{def of beta ftn}
I_x(a,b)=  \frac{ \Gamma(a+b) }{ \Gamma(a)\Gamma(b) } \int_0^x t^{a-1} (1-t)^{b-1}\,dt, \qquad (0<x<1);
\end{equation}
see \cite[Section~8.17]{NIST}.

\begin{thm}[\textbf{Spectral moments of the $q$-Laguerre unitary ensemble}] \label{Thm:Spectralmoment} Let $ {m}_{N,p}^{\rm (qL)}$ be the spectral moment of the $q$-LUE given by \eqref{def of qLUE moments}. 
\begin{itemize}
    \item[\textup{(A)}] \textup{\textbf{(Closed formula)}} For any nonnegative integers $p,\alpha$ and $N \in \mathbb{N}$, we have  
    \begin{equation}  \label{eqn:m}
  {m}_{N,p}^{\rm (qL)} = (1-q)^{p} [p]_q! \sum_{j=0}^{N-1}\sum_{i=0}^{p} q^{ (p-i)(\alpha-i)+pj }  \qbinom{p}{i} \qbinom{\alpha+j}{i} \qbinom{p-i+j}{j}. 
\end{equation}  
    \item[\textup{(B)}] \textup{\textbf{(Large-$N$ expansion)}}  Let $\alpha$ be scaled according to \eqref{def of alpha scaling}. Let 
    \begin{equation}\label{eqn:q scale}
    q=e^{-\frac{\lambda}{N}},\qquad \lambda\geq0.
\end{equation} 
Then we have 
\begin{equation} \label{def of leading order moment}
\lim_{N \to \infty} \frac{ 1 }{N}m_{N,p}^{(\mathrm{qL})} = \mathcal{M}_{p}^{ \rm (qL) } :=  \frac{1}{\lambda p}
\sum_{m=0}^p  \binom{p}{m} \mathsf{s}^{c m}  (1-\mathsf{s}^c)^{p-m}\,
I_{1-\mathsf{s}}(m+1,p),
\end{equation} 
where we recall that $\mathsf{s}=e^{-\lambda}$. Furthermore, for $c=0$, we have 
\begin{equation} \label{qLUE moment expansion c0}
m_{N,p}^{(\mathrm{qL})}|_{c=0}= N \mathcal{M}_{p}^{ \rm (qL) }|_{c=0} + \mathcal{M}_{p,1/2}^{ \rm (qL) }(d) +O\Big(\frac{1}{N}\Big), 
\end{equation}
where  
\begin{equation} \label{def of subleading term qLUE moment}
  \mathcal{M}_{p,1/2}^{ \rm (qL) }(d) := \frac12 I_{1- \mathsf{s} }(p+1,p)+\frac{d}{2}  \binom{2p}{p} \Big(  \mathsf{s}^p (1-\mathsf{s})^{p} - \delta_{p,0} \Big).  
\end{equation}
Here, $\delta$ is the Kronecker delta function.  
\end{itemize}
\end{thm}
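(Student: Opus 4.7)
The plan is to handle parts (A) and (B) separately. For (A), I intend to deploy the Flajolet-Viennot continued-fraction formalism for moments of orthogonal polynomial ensembles, following the approach developed in \cite{BFO24,BJO25} but refining it to accommodate the richer combinatorial structure arising from the little $q$-Laguerre recurrence. For (B), I will take the closed formula \eqref{eqn:m} as the starting point, perform the large-$N$ asymptotics of each factor under $q = e^{-\lambda/N}$ and $\alpha = cN+d$, recognise the resulting Riemann integral in terms of the regularised incomplete beta function, and then extract the subleading correction when $c=0$ via an Euler-Maclaurin analysis.

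For part (A), I first record the monic three-term recurrence
\begin{equation*}
x\,\pi_n(x) = \pi_{n+1}(x) + b_n\,\pi_n(x) + a_n\,\pi_{n-1}(x)
\end{equation*}
for the little $q$-Laguerre polynomials $\pi_n$ with weight $w^{(\mathrm{qL})}$, in which $a_n, b_n$ are explicit rational functions of $q^n$ and $q^\alpha$ obtained from the $q$-Askey scheme. Since the joint law \eqref{def of jpdf for qL} is determinantal with reproducing kernel $K_N(x,y) = \sum_{n<N}\pi_n(x)\pi_n(y)/h_n$, we obtain the Jacobi-matrix identity $m_{N,p}^{(\mathrm{qL})} = \operatorname{Tr}(J_N^p)$, where $J_N$ is the top-left $N\times N$ block of the associated tridiagonal matrix. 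By the Flajolet-Viennot theorem, $(J_N^p)_{nn}$ equals the weighted count of closed Motzkin paths of length $p$ based at height $n$, with the weight of each path being the product of the $a$'s and $b$'s along its steps. To recover the triple sum in \eqref{eqn:m}, I decompose every such path by: (i) the number $i$ of up-steps, encoded by $\qbinom{p}{i}$; (ii) the $q$-enumeration of the heights visited when based at $n = j$, which produces $\qbinom{\alpha+j}{i}$ via a $q$-Vandermonde argument applied to the $q^{\alpha}$-factors in the $a_n$'s; and (iii) the interlacing of the level steps among the peak pattern, which yields $\qbinom{p-i+j}{j}$. The prefactor $q^{(p-i)(\alpha-i)+pj}$ collects the accumulated residual $q$-powers. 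As a secondary check, \eqref{eqn:m} can be verified by induction on $p$ using $J^{p+1} = J\cdot J^p$ together with $q$-Pascal-type identities relating the three $q$-binomials.

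For the leading order in part (B), substitution into $m_{N,p}^{(\mathrm{qL})}/N$ combined with the uniform (for $t := j/N$ in compact subsets of $[0,1]$) asymptotics
\begin{align*}
(1-q)^p[p]_q! &= p!\,(\lambda/N)^p\bigl(1+O(1/N)\bigr),\\
\qbinom{\alpha+j}{i} &= \frac{1}{i!}\Bigl(\frac{N}{\lambda}\bigl(1-\mathsf{s}^c e^{-\lambda t}\bigr)\Bigr)^{i}\bigl(1+O(1/N)\bigr),\\
\qbinom{p-i+j}{j} &= \frac{1}{(p-i)!}\Bigl(\frac{N}{\lambda}\bigl(1-e^{-\lambda t}\bigr)\Bigr)^{p-i}\bigl(1+O(1/N)\bigr),\\
q^{(p-i)(\alpha-i)+pj} &= \mathsf{s}^{(p-i)c}\,e^{-\lambda p t}\bigl(1+O(1/N)\bigr),
\end{align*}
causes all powers of $N$ and $\lambda$ to cancel and converts $\sum_{j=0}^{N-1}$ into $N\int_0^1 dt$, yielding
\begin{equation*}
\lim_{N\to\infty}\frac{m_{N,p}^{(\mathrm{qL})}}{N} = \int_0^1 \sum_{i=0}^p\binom{p}{i}^{2}\mathsf{s}^{(p-i)c}\,e^{-\lambda pt}\bigl(1-\mathsf{s}^c e^{-\lambda t}\bigr)^{i}\bigl(1-e^{-\lambda t}\bigr)^{p-i}dt.
\end{equation*}
The change of variable $u = e^{-\lambda t}$ reduces this to $\lambda^{-1}\int_\mathsf{s}^1 u^{p-1}P(u)\,du$ for an explicit polynomial $P$. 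Binomially expanding $(1-\mathsf{s}^c u)^i$ and regrouping the double sum over $(i,k)$ by the total power $m$ of $\mathsf{s}^c$, a Chu-Vandermonde identity collapses the inner sum to $\binom{p}{m}\binom{m+p}{p}\mathsf{s}^{cm}(1-\mathsf{s}^c)^{p-m}(1-u)^m$, and the identity $\int_\mathsf{s}^1 u^{p-1}(1-u)^m\,du = \frac{m!(p-1)!}{(m+p)!}I_{1-\mathsf{s}}(m+1,p)$ produces exactly \eqref{def of leading order moment}.

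For the subleading term \eqref{def of subleading term qLUE moment} at $c=0$, I combine the Euler-Maclaurin expansion
\begin{equation*}
\sum_{j=0}^{N-1}F(j/N) = N\int_0^1 F(t)\,dt - \tfrac12\bigl(F(1)-F(0)\bigr) + O(1/N)
\end{equation*}
with the $O(1/N)$ corrections to each of the four asymptotic displays above. At $c = 0$ the identity $\sum_i\binom{p}{i}^2 = \binom{2p}{p}$ collapses the leading integrand to $\binom{2p}{p}e^{-\lambda pt}(1-e^{-\lambda t})^p$, the Euler-Maclaurin boundary term contributes $\binom{2p}{p}\mathsf{s}^p(1-\mathsf{s})^p$ for $p\ge 1$ and vanishes for $p=0$ (explaining the $\delta_{p,0}$), while the $d$-dependent $1/N$-corrections hidden inside $q^{(p-i)(d-i)+pj}$ and $\qbinom{d+j}{i}$ produce, after integration, a term proportional to $d\binom{2p}{p}\mathsf{s}^p(1-\mathsf{s})^p$; together with the next-order term from $(1-q)^p[p]_q!$, these assemble into $\tfrac12 I_{1-\mathsf{s}}(p+1,p) + \tfrac{d}{2}\binom{2p}{p}(\mathsf{s}^p(1-\mathsf{s})^p - \delta_{p,0})$. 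The main obstacle throughout will be the combinatorial identification in (A) -- matching the Motzkin-path weight data with the specific triple sum -- and, for the subleading term in (B), the careful bookkeeping of $O(1/N)$ contributions from four distinct sources so that they align into the closed-form expression in \eqref{def of subleading term qLUE moment}.
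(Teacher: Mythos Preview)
Your plan for part~(B) is essentially the paper's own argument: expand each factor of \eqref{eqn:m} under $q=e^{-\lambda/N}$, replace the $j$-sum by a Riemann integral, change variables to $u=e^{-\lambda t}$, and rearrange via Chu--Vandermonde into the incomplete beta function form; the subleading term at $c=0$ then comes from Euler--Maclaurin plus the $O(1/N)$ corrections. The paper's Lemmas~\ref{lem:large N of qfactorials}--\ref{lem:asymptotic of sum f_{l,i}} carry this out with the same intermediate integral \eqref{def of leading order moment by integral}, so for (B) you are on the right track and nothing substantive is missing.

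For part~(A), however, your proposed decomposition has a concrete problem. You interpret $i$ as ``the number of up-steps, encoded by $\qbinom{p}{i}$'', but in a closed Motzkin path of length $p$ the number of up-steps equals the number of down-steps and hence is at most $\lfloor p/2\rfloor$, whereas in \eqref{eqn:m} the index $i$ runs all the way to $p$. So $i$ cannot be the up-step count, and neither $\qbinom{p}{i}$ nor $\qbinom{\alpha+j}{i}$ arises from the mechanism you describe. More fundamentally, the little $q$-Laguerre recurrence has $b_n=q^n[n+\alpha+1]_q+q^{n+\alpha}[n]_q$ and $\lambda_n=q^{2n+\alpha-1}[n]_q[n+\alpha]_q$: each step weight is itself a polynomial in $q$ with several terms, so the total Motzkin sum is not naturally organised by the triple $(i,\text{heights},\text{interlacing})$ in a way that would separate into three independent $q$-binomials.

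The paper's resolution is to introduce a genuinely new combinatorial model: a \emph{two-row} bipartite matching on a vertex set $\mathcal{S}^\alpha(p,j)$ with a forbidden-edge condition, equipped with a crossing statistic $\Cr(M)$. The key step is a bijection (Proposition~\ref{Prop_m har q cr stat}) showing that the weighted Motzkin sum equals $\sum_{M}q^{\Cr(M)}$; the index $i$ then emerges as the number of edges incident to the ``negative top'' block $T_-$, which can indeed range from $0$ to $p$. The three $q$-binomials in \eqref{eqn:m} come from inversion generating functions for the placement of matched vertices in the various blocks, combined with a $q$-Vandermonde identity (Lemma~\ref{Lem_q cr evaluation}). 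This two-row matching with crossings is the missing idea in your sketch---it is more intricate than the single-row matchings used for the $q$-GUE and Al-Salam--Carlitz cases in \cite{BFO24,BJO25}, and without it the identification of the triple sum does not go through.
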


\begin{figure}[t]
	\begin{subfigure}{0.45\textwidth}
	\begin{center}	
		\includegraphics[width=\textwidth]{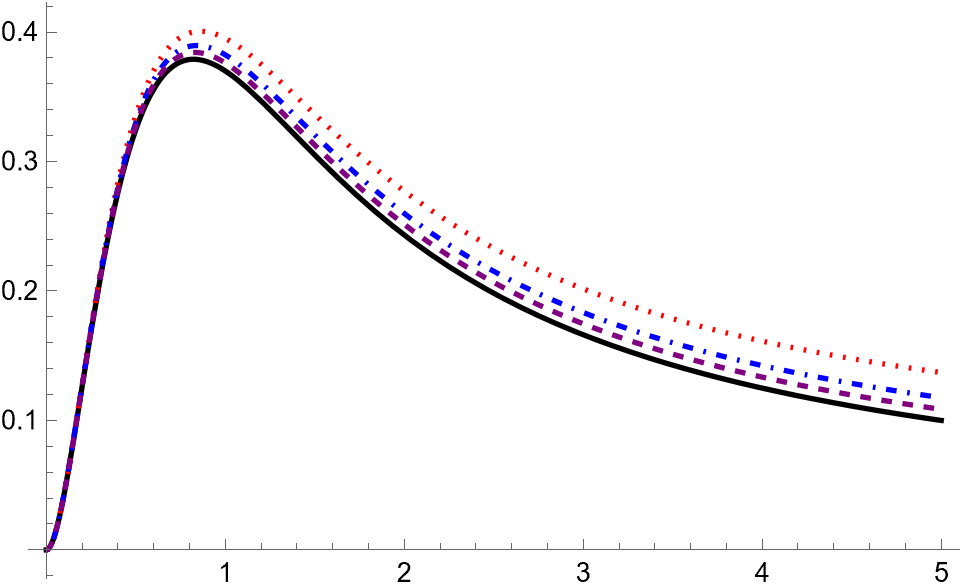}
	\end{center}
	\subcaption{ $\alpha=N$ $(c=1, d=0)$}
\end{subfigure}	 \qquad 
	\begin{subfigure}{0.45\textwidth}
	\begin{center}	
		\includegraphics[width=\textwidth]{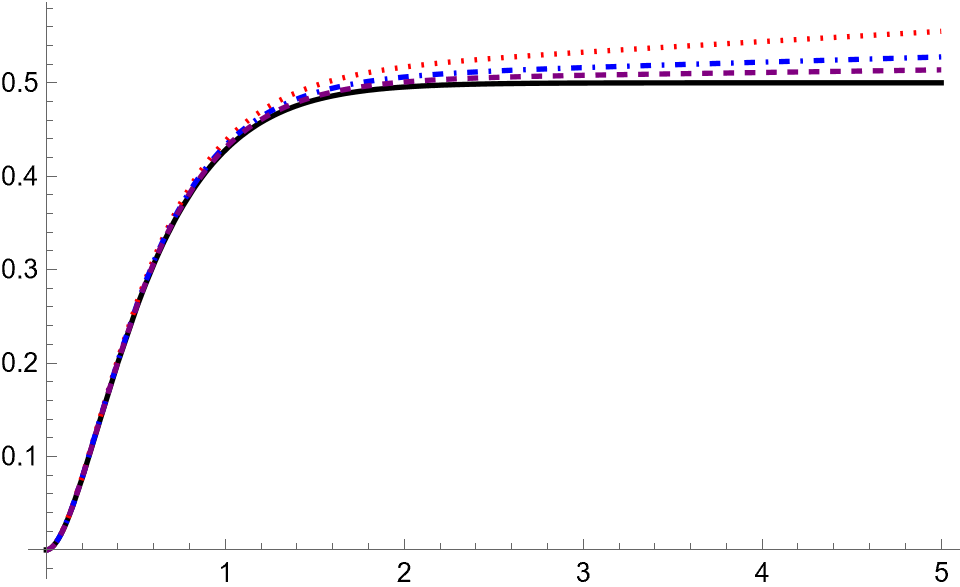}
	\end{center}
	\subcaption{ $\alpha=1$ $(c=0, d=1)$}
\end{subfigure}		
	\caption{(A): The graph of $\lambda \mapsto \mathcal{M}_{p}^{\rm (qL)}$ (black solid) together with $\lambda \mapsto \frac{1}{N} m_{N,p}^{\rm (qL)}$ for $N=15$ (red, dotted), $N=30$ (blue, dot--dashed), and $N=60$ (purple, dashed). (B): The corresponding plot of $\lambda \mapsto m_{N,p}^{\rm (qL)} - N \mathcal{M}_{p}^{\rm (qL)}$ and its comparison with $\lambda \mapsto \mathcal{M}_{p,1/2}^{\rm (qL)}(d)$.
Here $p=2$. } \label{Fig_moments asymptotic}
\end{figure}

See Figure~\ref{Fig_moments asymptotic} for the numerics of Theorem~\ref{Thm:Spectralmoment} (B).
Theorem~\ref{Thm:Spectralmoment} will be proved in Section~\ref{Section_moments combinatorics} using the combinatorics of orthogonal polynomials together with a bijection to a certain matching problem; see Figure~\ref{fig:comparison_models}. We note that \eqref{eqn:m} provides one of the very few instances in which the spectral moments of $q$-deformed models can be evaluated explicitly in closed form.

Note that when $n \ge m \ge 0$ are integers, we have 
\begin{equation} \label{def of beta ftn for integers}
I_x(m,n-m+1)=\sum_{j=m}^n \binom{n}{j} x^j (1-x)^{n-j}. 
\end{equation}
By \eqref{def of beta ftn for integers}, the first few values of $\mathcal{M}_{p}^{ \rm (qL) }$ is given as follows: 
\begin{align} \label{eq:first_moment}
\mathcal{M}^{ \rm (qL) }_{0}=1, \qquad \mathcal{M}^{ \rm (qL) }_{1}=\frac{1}{\lambda}  (1-\mathsf{s})(1-\mathsf{s}^{c+1}), \qquad \mathcal{M}^{ \rm (qL) }_{2}= \frac{1}{2\lambda} (1-\mathsf{s})(1-\mathsf{s}^{c+1})(1+\mathsf{s}+\mathsf{s}^{c+1}-3\mathsf{s}^{c+2}).
\end{align}
In particular, the first moment $\mathcal{M}^{\rm (qL)}_{1}$ plays an important role in the  analysis of the equilibrium measure problem discussed later.

\bigskip

Next, we discuss the macroscopic limit of the density. For this, we recall that for a parameter $\mathsf{a}\in (0,1/q)$, the \emph{little $q$-Laguerre} (also known as \emph{Wall}) polynomial $p_{n}(x;\mathsf{a}|q)\equiv p_{n}(x)$ \cite[Section 14.20]{KLS10} is defined by the three-term recurrence relation 
\begin{equation}\label{eqn:three term of qlaguerre}
    q^{n}(1-\mathsf{a}q^{n+1})p_{n+1}(x)+(x-q^{n}(1-\mathsf{a}q^{n+1})-\mathsf{a}q^{n}(1-q^{n}))p_{n}(x)+\mathsf{a}q^{n}(1-q^{n})p_{n-1}(x)=0
\end{equation}
with initial conditions $p_0(x)=1$ and $p_1(x)=-x/(1-\mathsf{a} q)+1$. 
Throughout this paper, we put 
\begin{equation}
   \mathsf{a}=q^{\alpha},\qquad \alpha>-1, \qquad \alpha = cN + d. 
\end{equation} 
By taking the continuum limit, one can recover the classical Laguerre polynomial. More precisely, we have 
\begin{equation}\label{eqn: continuum limit to Laguerre polynomial}
    \lim_{q\rightarrow1}p_{n}((1-q)x;q^{\alpha}\vert q)=\frac{n! \,\Gamma(\alpha+1)}{ \Gamma(n+\alpha+1) }\,L_{n}^{(\alpha)}(x). 
\end{equation} 
The three-term recurrence relation \eqref{eqn:three term of laguerre} is also  recovered from \eqref{eqn:three term of qlaguerre} via this scaling limit. 

To present our main results, we introduce the associated finite-$N$ observables. 
The first is the averaged density (or one-point function) of the $q$-LUE, which, by the general theory of determinantal point processes, is expressed in terms of the $q$-Laguerre polynomials as
\begin{equation}
    \rho_{N}(x):= \sum_{j=0}^{N-1} \frac{ p_{j}(x;q^{\alpha}\,|\,q)^2 }{ h_j }  w^{(\mathrm{qL})}(x;q), \qquad h_j:=(1-q)q^{(\alpha+1)n}\frac{(q;q)_{n}}{(q^{\alpha+1};q)_{n}}\frac{(q;q)_{\infty}}{(q^{\alpha+1};q)_{\infty}}. 
\end{equation}  
The second observable is the empirical measure of the $q$-LUE  \eqref{def of jpdf for qL}: 
 \begin{equation}
  \mu_N(dx) := \frac{1}{N} \sum_{j=1}^N \delta_{x_j} .
 \end{equation}
The third is the empirical zero distribution of the $q$-Laguerre polynomials, defined by
\begin{equation} \label{def of ESD of zeros}
    \nu_{n,N}(dx):=\frac{1}{n}\sum_{j=1}^{n} \delta_{x_{n,N}^{(j)}},
\end{equation}
where $\delta_{x_{n,N}^{(1)}},\cdots,\delta_{x_{n,N}^{(n)}}$ are the zeros of $p_{n}(x;q^\alpha|q)$.  

In addition, in order to formulate the large deviation principle, we introduce the set 
\begin{equation}
\label{eq:prob_space}
    \mathcal{P}_{\lambda}(0,1):=\bigg\{ \mu(dx)\;\vert\; \supp(\mu(dx))\subseteq[0,1],\; \frac{\delta\mu(dx)}{\delta x}\leq \frac{1}{\lambda x}\bigg\}
\end{equation}
 of probability measures, where $\supp(\mu(dx))$ is the support of the measure and $\frac{\delta\mu(dx)}{\delta x}$ is the Radon-Nikodym derivative of the measure. Here, the upper constraint condition is the essential structural element inherited from the underlying discrete model; cf. Remark~\ref{Rem_upper constraint}. 
We also introduce the energy functional
 \begin{equation}
    \label{eq:functional_rhp}
        I[\mu] : = - \int_{0}^{1}\int_{0}^{1} \log(|x-y|)\, \mu(dx) \, \mu(dy)  + \int_0^1 \Big( \frac{1}{\lambda}\Li_2(x)-c \log x \Big)\, \mu(dx)\,,
    \end{equation}
where $\Li_2(x)$ is the dilogarithm \cite[Chapter~25.12]{NIST}, defined for 
$x<1$ by
    \begin{equation}
    \label{eq:Li2_def}
        \Li_2(x) = - \int_0^x \frac{\log(1-t)}{t}\, dt.
    \end{equation}
Here, the appearance of the dilogarithm arises from the asymptotic behaviour of the weight function $w^{(\mathrm{qL})}$; see Lemma~\ref{Lem_asymp of weight}. We are now ready to state the main result of the paper. 

\begin{thm}[\textbf{$q$-deformed Marchenko-Pastur law via three approaches}]\label{Thm:limiting density in growth regime}
Let $c \ge 0$ and $d \in \mathbb{N}$ be fixed, and let $q$ be scaled according to \eqref{eqn:q scale}. Let $\rho^{(c)}$ be the $q$-deformed Marchenko-Pastur law \eqref{eqn:def of limiting density}. 
\begin{enumerate}[label = \textup{(\Alph*)}]
    \item \textup{\textbf{(Probability measure determined by moments)}} 
    As $N \to \infty$, in the sense of integration against continuous test functions $f \in C([0,1])$, we have 
\begin{equation}
    \frac{1}{N}\rho_{N}( x ) \to  \rho^{ (c) }( x ).  
\end{equation} 
    \item \textup{\textbf{(Equilibrium measure of the $q$-logarithmic energy)}} \label{thm:point_b}
    The sequence of measures $ \{ \mu_N(dx) \}_{N\in\mathbb{N}}$ satisfies a large deviation principle in the space of probability measures $\mathcal{P}_{\lambda}(0,1)$ with scale $N^2$ and a good convex rate function $J[\mu] := I[\mu] - \inf_{\mu\in\mathcal{P}_{\lambda}(0,1)} I[\mu]$, where $I[\mu]$ is given by \eqref{eq:functional_rhp}. 
    Furthermore, 
    \begin{equation}
    I[\rho^{(c)}(x)\,dx] = \inf_{\mu\in\mathcal{P}_{\lambda}(0,1)} I[\mu]
    \end{equation}
    and consequently, as $N \to \infty$, we have 
    \begin{equation} 
    \mu_N(dx)   \to \rho^{(c)}(x)\,dx \qquad \text{almost surely}\,.
    \end{equation} 
\item \textup{\textbf{(Limiting zero distribution of little $q$-Laguerre polynomial)}} As $N \to \infty$ with $n/N\rightarrow1$, in the sense of integration against continuous test functions $f \in C([0,1])$, we have  
\begin{equation}
     \nu_{n,N}(dx) \to  \rho^{(c)}(x)\,dx. 
\end{equation} 
\end{enumerate} 
\end{thm}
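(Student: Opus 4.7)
The plan is to prove the three characterizations in sequence, exploiting Theorem~\ref{Thm:Spectralmoment} for part (A), the standard framework of large deviations for discrete log-gases for part (B), and the general theory of orthogonal polynomials with respect to varying discrete weights (in the spirit of \cite{ST97}) for part (C). A unifying observation is that the target density $\rho^{(c)}$ is compactly supported in $[0,1]$, which is essential both for the moment-based uniqueness argument in part (A) and for the compactness of the constraint set $\mathcal{P}_\lambda(0,1)$ in part (B).

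For part (A), since $\int_0^1 x^p \rho_N(x)\,dx = m_{N,p}^{(\mathrm{qL})}$ by the determinantal structure of the $q$-LUE, Theorem~\ref{Thm:Spectralmoment}(B) immediately yields convergence of the $p$-th moment of $N^{-1}\rho_N(x)\,dx$ to $\mathcal{M}_p^{(\mathrm{qL})}$. Because all relevant measures live in the compact interval $[0,1]$, the moment problem is determinate, and convergence against continuous test functions follows from Weierstrass approximation once we verify the identity
\[
\int_0^1 x^p\, \rho^{(c)}(x)\,dx = \frac{1}{\lambda p}\sum_{m=0}^p \binom{p}{m}\mathsf{s}^{cm}(1-\mathsf{s}^c)^{p-m}\, I_{1-\mathsf{s}}(m+1,p).
\]
For the band part on $(a,b)$, the natural route is a substitution based on $\sqrt{1-x}$ (or a trigonometric substitution on the arctangent argument in \eqref{eqn:def of limiting density}) that unfolds the integral into a finite sum of incomplete Beta integrals. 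The saturated region contributes the elementary $\frac{1}{\lambda}\int_b^1 x^{p-1}\,dx$ when $\lambda>\lambda_c$, and combining the two pieces matches \eqref{def of leading order moment} through the identity \eqref{def of beta ftn for integers}.

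For part (B), the potential $V(x) = \frac{1}{\lambda}\Li_2(x) - c\log x$ arises from the asymptotic identity $-\tfrac{1}{N}\log w^{(\mathrm{qL})}(x;q) \to V(x)$ supplied by Lemma~\ref{Lem_asymp of weight}; the Euler--Maclaurin calculation $\sum_{j\ge 1}\log(1-q^j x) \approx \tfrac{N}{\lambda}\int_0^1 v^{-1}\log(1-vx)\,dv = -\tfrac{N}{\lambda}\Li_2(x)$ produces the dilogarithm. The upper constraint $\tfrac{d\mu}{dx} \le \tfrac{1}{\lambda x}$ reflects the lattice spacing $q^k - q^{k+1} \approx \tfrac{\lambda x}{N}$ near $x=q^k$: at most one particle can occupy each site, so the empirical measure puts mass at most $\tfrac{1}{\lambda x}\,dx$ per unit interval. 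With these two ingredients, the LDP with speed $N^2$ and rate function $J[\mu]=I[\mu]-\inf I$ follows from the now-standard discrete $\beta$-ensemble machinery of \cite{Jo98,BGG17,Se24}. Strict convexity of $I$ on $\mathcal{P}_\lambda(0,1)$ yields uniqueness of the minimizer $\mu^\star$ and, via Borel--Cantelli, almost sure convergence $\mu_N\to\mu^\star$. It remains to verify $\mu^\star = \rho^{(c)}(x)\,dx$ through the Euler--Lagrange conditions for the constrained problem: on the band $(a,b)$ where the constraint is inactive,
\[
2\int_0^1 \log|x-y|\,\rho^{(c)}(y)\,dy = V(x) + \ell,
\]
on the saturated region $(b,1)$ one has $\rho^{(c)}(x)=\tfrac{1}{\lambda x}$, while on $(0,a)$ the complementary inequality holds. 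Differentiating and computing the Stieltjes transform of $\rho^{(c)}$ reduces this to a contour-deformation check; the endpoints $a,b$ and the critical value $\lambda_c$ are pinned down by the requirement that the band density vanish as $\sqrt{\cdot}$ at the left endpoint and match (respectively saturate) the constraint at the right endpoint.

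Part (C) then follows along one of two routes: one may invoke the Rakhmanov--Dragnev--Saff theory of constrained equilibrium measures for zeros of orthogonal polynomials on a lattice \cite{ST97}, applied to the little $q$-Laguerre polynomials with the external field $V$ and constraint $\tfrac{1}{\lambda x}$ identified above, which yields $\nu_{n,N}(dx)\to\rho^{(c)}(x)\,dx$; alternatively, one may extract the limiting behaviour of the Jacobi coefficients in \eqref{eqn:three term of qlaguerre} and apply the classical correspondence between the density of states of a tridiagonal matrix and its spectral measure. I expect the main obstacle to lie in part (B), specifically in verifying the Euler--Lagrange conditions for the explicit two-piece density \eqref{eqn:def of limiting density} \emph{uniformly across the phase transition}: the matching of the band density and the saturated density at $x=b$, together with the identification of $\lambda_c$ through \eqref{eqn:def of lambdac}, requires careful branch choices for the dilogarithm and for the arctangent, and is the step where the combinatorial simplicity of the moment approach and the analytic tidiness of the orthogonal-polynomial approach both break down.
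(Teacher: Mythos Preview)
Your overall strategy matches the paper's, but the execution differs in two places worth noting. For part~(A), you propose to verify $\int x^p \rho^{(c)}(x)\,dx = \mathcal{M}_p^{(\mathrm{qL})}$ by direct substitution into \eqref{eqn:def of limiting density}; the paper instead runs the argument constructively, forming the Cauchy transform $g^{(c)}(y)=\sum_p \mathcal{M}_p^{(\mathrm{qL})} y^{-p-1}$ from \eqref{def of leading order moment by integral}, collapsing the double sum via the Legendre-polynomial generating function \eqref{3.15}, and recovering $\rho^{(c)}$ through Sokhotski--Plemelj---thus \emph{deriving} rather than merely checking the density formula. For part~(B), the LDP itself is imported from \cite{HMO25,DD22} (the discrete-$\beta$-ensemble references tailored to exponential lattices, rather than the continuous-case \cite{Jo98}), and the Euler--Lagrange verification is framed as a scalar Riemann--Hilbert problem for the Cauchy transform: after the transformation $\varphi(z)=g(z)/R(z)$, matching the large-$z$ expansion yields the nonlinear system \eqref{eqn for system 1}--\eqref{eqn for system 3} for the endpoints, and the paper stresses that one of these equations requires the first moment $\mathcal{M}_1^{(\mathrm{qL})}$ from part~(A) to close---this is the concrete point where the moment method feeds the potential-theoretic one, which your sketch (``contour-deformation check'', endpoints fixed by edge behaviour) leaves implicit. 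For part~(C), your second route (limiting Jacobi coefficients $\to$ density of states) is exactly the paper's, via Kuijlaars--Van Assche \cite{KV99}: the limiting zero distribution is written as $\int_0^1 \omega_{[B(s)-2A(s),B(s)+2A(s)]}\,ds$ and identified, under $t=e^{-\lambda s}$, with the integral representation \eqref{eqn:integral representation of rho} already obtained in~(A).
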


See Figure~\ref{Fig_limiting density} for the graphs of $q$-deformed Marchenko-Pastur law.  We stress that the three approaches in Theorem~\ref{Thm:limiting density in growth regime} are fundamentally different, yet complementary. Most notably, in order to establish the large deviation principle in Theorem~\ref{Thm:limiting density in growth regime} (B), the characterisation of the minimising measure requires precise information on the first moment of $\rho^{(c)}$, which is obtained from Theorem~\ref{Thm:limiting density in growth regime} (A).

\begin{figure}[t]   \centering
    \begin{tabular}{ | c | c | c  | } 
        \hline
        \cellcolor{gray!25}  & \cellcolor{gray!10} \parbox[c]{5.0cm}{\centering \vspace{.5\baselineskip} $ \lambda < \lambda_c $ \\ \emph{band region}   \vspace{.5\baselineskip} }  & \cellcolor{gray!10} \parbox[c]{5.0cm}{\centering \vspace{.5\baselineskip} $ \lambda > \lambda_c $ \\ \emph{band \& saturated region} \vspace{.5\baselineskip}}     
        \\
        \hline
        \cellcolor{gray!10} \parbox[c]{1.5cm}{\centering  $c=0$ \\ \emph{hard edge} \vspace{.5\baselineskip} }  
        &   \rule{0pt}{3cm} \raisebox{-0.4\height}{\includegraphics[width=0.4\linewidth]{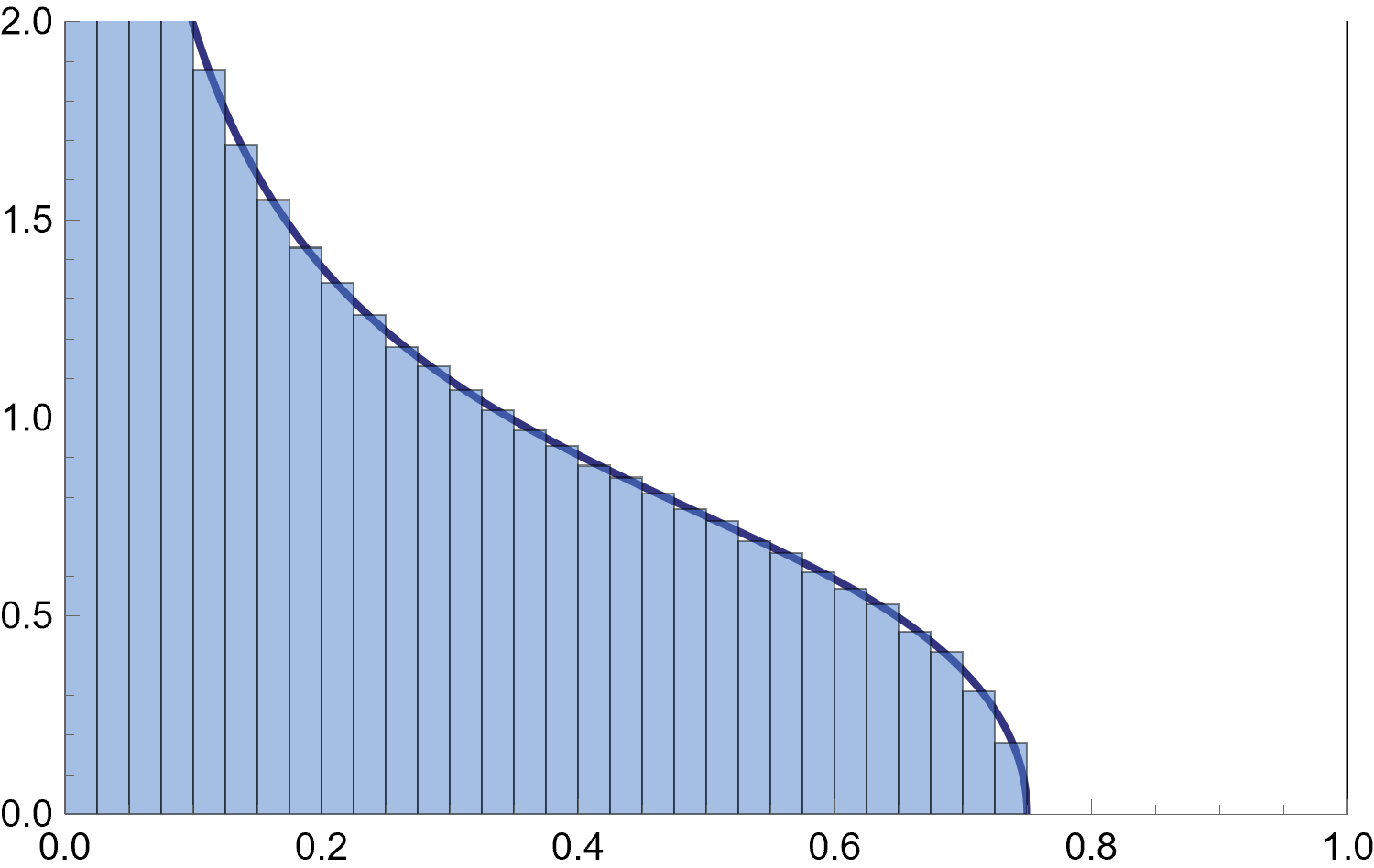}  }   
        &     \rule{0pt}{3cm} \raisebox{-0.4\height}{\includegraphics[width=0.4\linewidth]{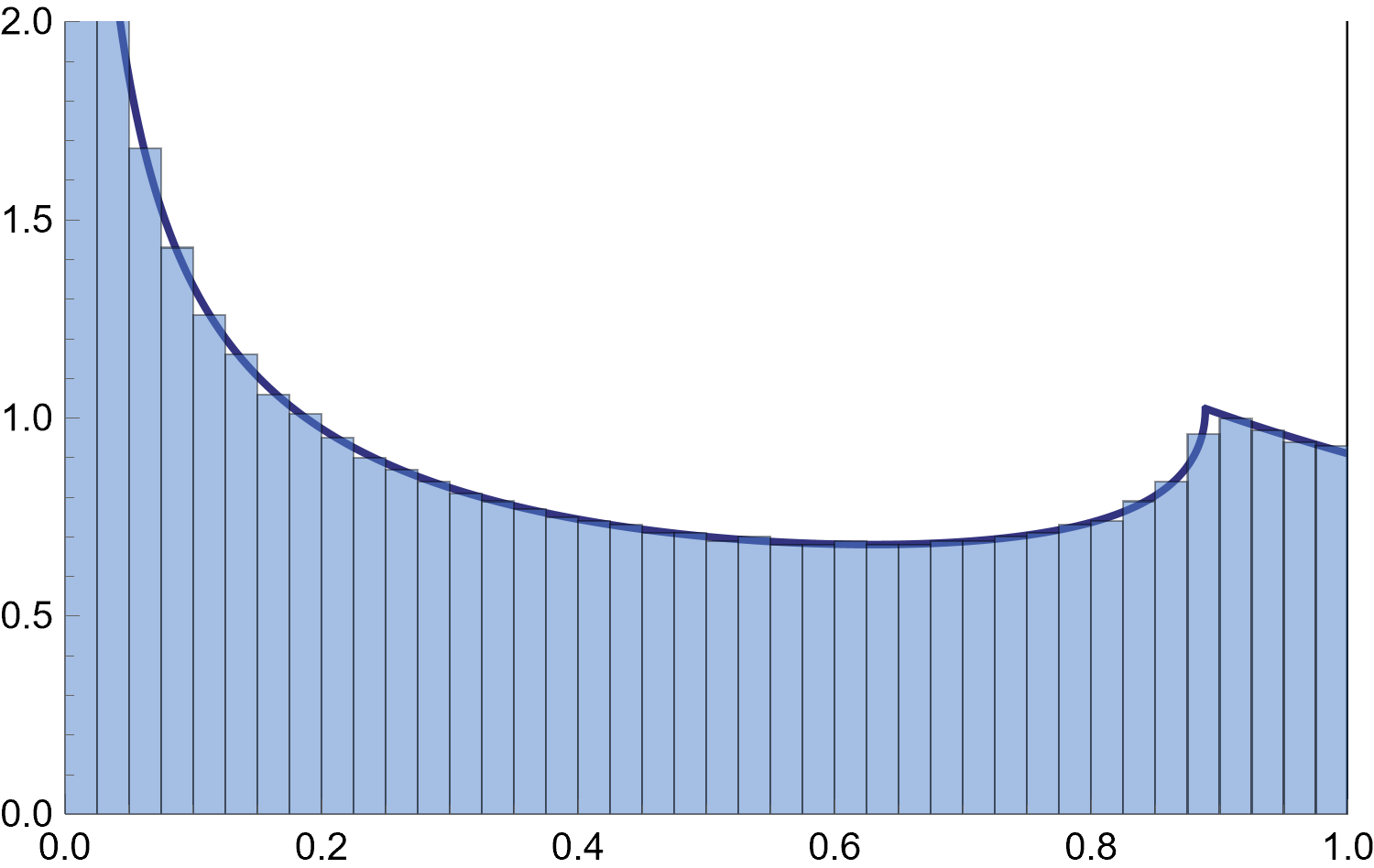} }  
         \\
        \hline
        \cellcolor{gray!10}  \parbox[c]{1.5cm}{\centering \vspace{-3em} $c>0$ \\ \emph{soft edge}  }    &    \rule{0pt}{3cm} \raisebox{-0.4\height}{\includegraphics[width=0.4\linewidth]{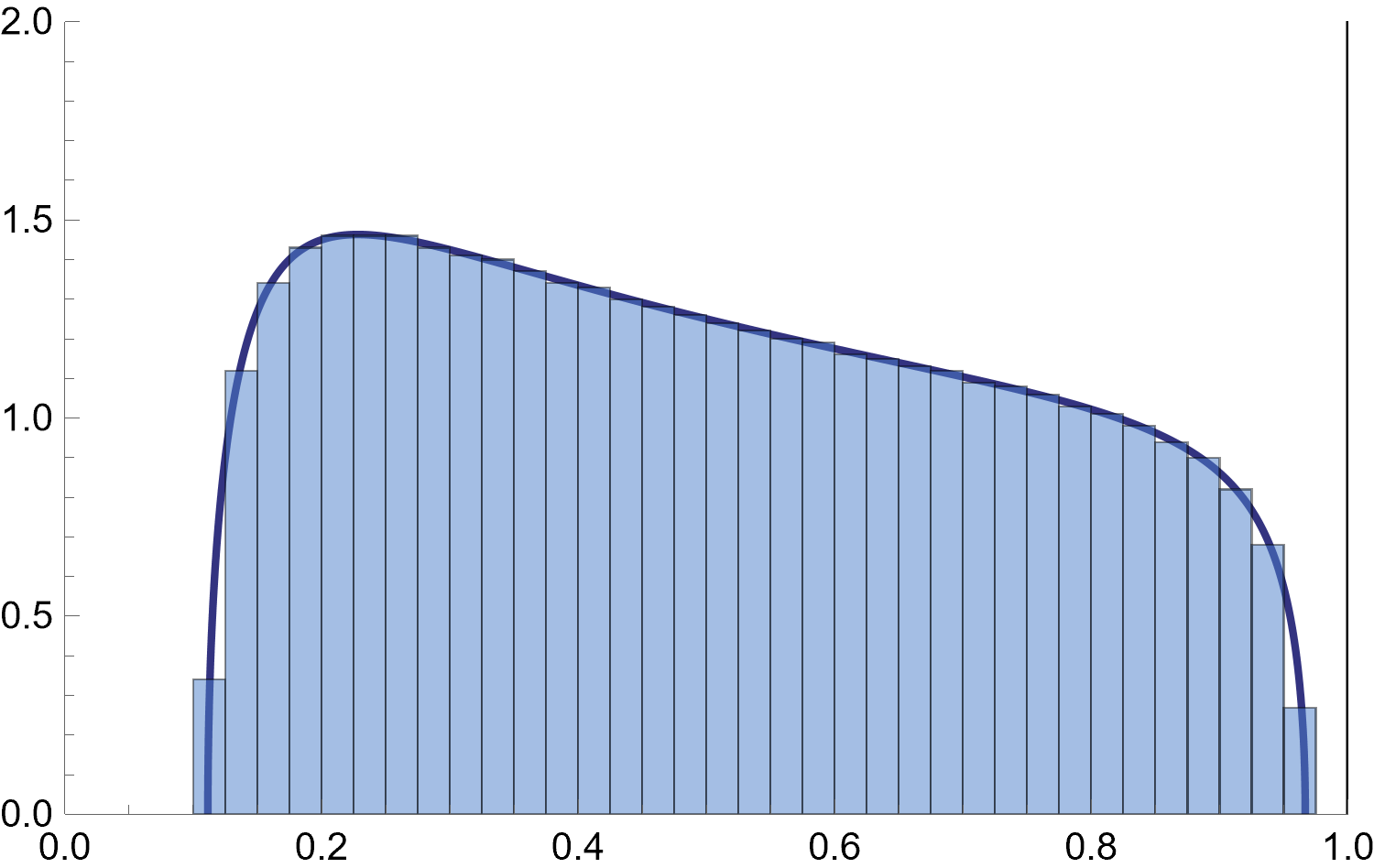}   }     &    \rule{0pt}{3cm} \raisebox{-0.4\height}{\includegraphics[width=0.4\linewidth]{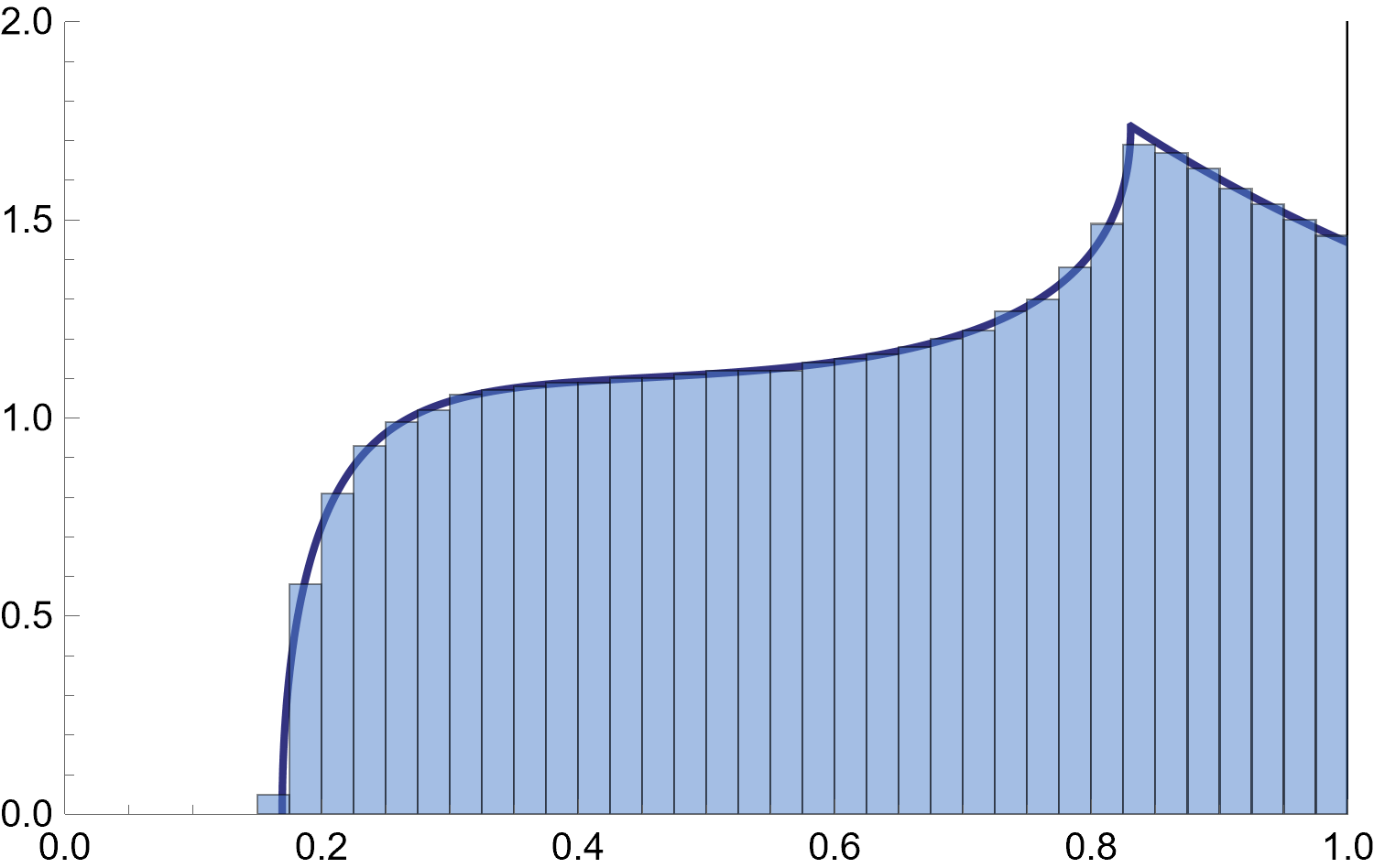} }    
        \\
        \hline
    \end{tabular}
     \caption{The plots illustrate the density \eqref{eqn:def of limiting density} for $c=0$ (first row) and $c=2$ (second row). The corresponding critical values are $\lambda_c\vert_{c=0}=\log 2$ and $\lambda_c\vert_{c=2}\approx 0.6823$. For $c=0$, the parameters are $\lambda=\log(4/3)<\lambda_c$ (left) and $\lambda=\log 3>\lambda_c$ (right). For $c=2$, the parameters are $\lambda=\log(4/3)<\lambda_c$ (left) and $\lambda=\log 2>\lambda_c$ (right). In addition, histograms of the zeros of the $q$-Laguerre polynomials for $N=4000$ are superimposed.}
    \label{Fig_limiting density}
\end{figure}

\subsection{Discussions and related works}

We end this section by providing several remarks on our main results.

\begin{rem}[Saturated and band region]
In the definition of the $q$-deformed Marchenko-Pastur law \eqref{eqn:def of limiting density}, one can easily check that the critical value $\lambda_c$ defined in \eqref{eqn:def of lambdac} is decreasing function of $c$. For instance, we have 
\begin{equation}
\lambda_c|_{c=0} = \log 2, \qquad  \lambda_c|_{c=1} = \log \frac{ \sqrt{5}+1 }{2}.  
\end{equation}
In addition, we note that the critical value $\lambda_c$ is characterised by the condition $b<1$.

When $\lambda > \lambda_c$, the limiting density \eqref{eqn:def of limiting density} develops a portion of its support where it coincides with the upper constraint in \eqref{eq:prob_space}. This part of the support is referred to as the \emph{saturated region}. The other part, where the density is strictly below the upper constraint is called the \emph{band region}; see e.g. \cite{BKMM}. 

Such a phase separation is closely aligned with the well-known decomposition into \textit{frozen (solid)} and \textit{liquid (rough)} regions in random growth models; see e.g. \cite{BCG16,DK21,CC25} and the references therein. 
In that setting, frozen regions correspond to portions of the limit shape where the local slope is forced to saturate boundary constraints, whereas liquid regions arise where the height function varies freely and exhibits nontrivial fluctuations.
The appearance of saturated and band regions in our model follows an analogous mechanism: the equilibrium measure becomes pinned to its upper constraint in the saturated phase, while in the band phase it remains strictly within the admissible range, leading to genuinely fluctuating behaviour. This structural parallel highlights a connection between discrete log-gas ensembles and integrable probabilistic models; see Remark~\ref{Rem_LPP} for more on this direction.
\end{rem}

\begin{rem}[Jackson $q$-integral]
In the $q$-deformed setting, the natural analogue of the integral, denoted by $d_q$, is the Jackson $q$-integral defined by
\begin{equation} \label{def of Jackson integral} 
    \int_{0}^{A}f(x)\,d_{q}x=(1-q)\sum_{j=0}^{\infty}Aq^{j}f(Aq^{j}). 
\end{equation} 
Due to the discrete nature of the Jackson integral, the usual tools of complex analysis are no longer applicable, which is one of the main sources of difficulty in the $q$-deformed setting. Nonetheless, as an intermediate step in the proof of Theorem~\ref{Thm:Spectralmoment} (A), we obtain the following identity for the $q$-deformed Laguerre polynomials:
\begin{align}
\begin{split}
&\quad \int_{0}^{1}x^{p}p_{j}(x;q^{\alpha}|q)^{2}w^{(\mathrm{qL})}(x;q)\,d_{q}x 
\\
&= (1-q)^{p+1}q^{(p+\alpha+1)j}\frac{(q;q)_{j}}{(q^{\alpha+1};q)_{j}}\frac{(q;q)_{\infty}}{(q^{\alpha+1};q)_{\infty}}  [p]_q!  \sum_{i=0}^{p} q^{ (p-i)(\alpha-i) }  \qbinom{p}{i} \qbinom{\alpha+j}{i} \qbinom{p-i+j}{j}.
\end{split}
\end{align}
This expression extends \eqref{eqn:orthogonality of little q Laguerre}, which is recovered by taking $p=0$. Such explicit evaluations of Jackson $q$-integrals are of independent interest. In addition, such exact evaluations are naturally connected to the superintegrable structure of the model and to symmetric function techniques; see \cite[Section~3.3]{LWYZ25}. 
\end{rem}

\begin{rem}[The special case $c=0$: hard edge at the origin and $q$-deformed semi-circle law] 
When $c=0$, the RHS of \eqref{leading order conv for LUE moment} reduces to the Catalan number $C_p := \frac{1}{p+1} \binom{2p}{p}$.  
This is consistent with the well--known fact that the moments of the semicircle law are given by the Catalan numbers, and that the Marchenko-Pastur law~\eqref{def of MP law} with $c=0$ reduces to the semicircle law under the quadratic change of variables $x \mapsto x^{2}$.  

This phenomenon can also be observed in the $q$-deformed model. 
Namely, if $c=0$, we have
    \begin{equation}
        \mathcal{M}^{ \rm (qL) }_{p}\Big|_{c=0}=\frac{1}{\lambda p}I_{1- \mathsf{s} }(p+1,p).
    \end{equation}
This expression coincides with the leading term of the $2p$-th spectral moment of the $q$-deformed GUE recently obtained in \cite[Theorem~2.3]{BFO24}.  
Moreover, for the special case $c=0$, the density \eqref{eqn:def of limiting density} simplifies to  
\begin{align}
\begin{split}
  \rho^{(0)}(x)
            &=\frac{2}{\pi\lambda x}\arctan\sqrt{\frac{1-\sqrt{1-x}}{1+\sqrt{1-x}}\frac{\sqrt{1-x}+1-2e^{-\lambda}}{\sqrt{1-x}-1+2e^{-\lambda}}}\mathbbm{1}_{(0,4e^{-\lambda}(1-e^{-\lambda}) )}(x)
            \\
            &\quad +
            \begin{cases}
                0&\textup{if }\lambda \leq \log 2,
                \smallskip 
                \\
                \displaystyle\frac{1}{\lambda x}\mathbbm{1}_{( 4e^{-\lambda}(1-e^{-\lambda}) ,1)}(x)&\textup{if }\lambda>\log2. 
            \end{cases}
\end{split}
\end{align} 
    In this case the left edge coincides with the origin, producing a singularity that forms a hard edge. Moreover, its quadratic transform $|x| \rho^{(0)}(x^2)$ corresponds to the $q$-deformed semicircle law described in \cite[Theorem~2.6]{BFO24} (see also \cite[Remark~4]{BJO25}).
\end{rem}

\begin{rem}[Continuum limit $q\to1$] \label{Rem_continuum limit q1}
By construction, the leading-order moment $\mathcal{M}_p^{\rm (qL)}$ can be written as 
\begin{equation}
\mathcal{M}_p^{ \rm (qL) } = \int_{a}^b x^p \rho^{(c)}(x)\,dx. 
\end{equation}
As noted earlier in \eqref{qMP to MP}, the appropriately rescaled density 
$\widehat{\rho}^{(c)}$ converges to $\rho_{\rm MP}^{(c)}$ as $\lambda \to 0$. 
The same limiting phenomenon can also be observed directly in the corresponding moments. 
For this, notice that 
   \begin{align*}
\lim_{\lambda\rightarrow0}  \frac{1}{\lambda^{p}}\mathcal{M}_{p}^{ \rm (qL) } = \frac{1}{  p} \sum_{j=0}^p \binom{p}{j} \binom{2p-j}{p-1} c^{j}  
= \frac{1}{p} \sum_{j=0}^p \bigg( \sum_{k=j}^p \binom{p}{k}\binom{p}{k-1} \binom{k}{j} \bigg) c^j  =   \sum_{j=0}^p N(p,j)\sum_{k=0}^j \binom{j}{k} c^k. 
\end{align*} 
Therefore, as $\lambda \to 0$, we obtain
  \begin{equation} 
        \lim_{\lambda\rightarrow0}\frac{1}{\lambda^{p}}\mathcal{M}_{p}^{ \rm (qL) }  =\sum_{j=0}^{p}\mathrm{N}(p,j)(1+c)^{j} ,
    \end{equation} 
which recovers \eqref{evaluation of LUE moments}.

In addition, by \cite[Eq.~(3.46)]{FRW17}, for $c=0$, the spectral moment of the LUE satisfies the expansion  
\begin{align} \label{LUE moment expansion}
 \frac{1}{N^p}m_{N,p}^{ \rm (L) }|_{c=0} = N C_p + \frac{d}{2}\bigg( \binom{2p}{p}-\delta_{p,0} \bigg) +O\Big(\frac{1}{N}\Big).  
\end{align}
Note that by \eqref{def of subleading term qLUE moment}, we also have 
\begin{align*}
\lim_{\lambda \to 0} \frac{1}{\lambda^p} \mathcal{M}_{p,1/2}^{ \rm (qL) }(d) = \frac{d}{2}\bigg( \binom{2p}{p}-\delta_{p,0} \bigg). 
\end{align*}
Therefore, the expansion \eqref{LUE moment expansion} follows by taking $\lambda \to 0$ limit of \eqref{qLUE moment expansion c0}.

Such a limiting behaviour is also reflected at the level of the rate function in the large deviation principle. Notice first that as $\lambda \to 0$, the upper constraint in \eqref{eq:prob_space} disappears. Moreover, by the asymptotic behaviour  
\begin{equation}
\frac{1}{\lambda} \Li_2(\lambda x)= x +O(\lambda), \qquad \textup{as } \lambda \to 0, 
\end{equation}
the energy functional \eqref{eq:functional_rhp} converges to the one \eqref{def of I_V LUE} associated with the LUE.
\end{rem}

\begin{rem}[Large-$N$ expansion of spectral moments] While Theorem~\ref{Thm:Spectralmoment}~(B) concerns primarily the leading-order asymptotic behaviour of ${m}_{N,p}^{(\mathrm{qL})}$, with some additional effort one can obtain the large-$N$ expansion
    \begin{equation}\label{eqn:qpMNp expansion}
        q^{p}{m}_{N,p}^{(\mathrm{qL})}
        =\mathcal{M}_{p}^{ \rm (qL) }N+\mathcal{M}_{p,1/2}^{ \rm (qL) }+\mathcal{M}_{p,1}^{ \rm (qL) } \frac{1}{N} + \cdots. 
    \end{equation}
Although the coefficients in this expansion can in principle be computed explicitly, the resulting formulas are rather involved, and we refrain from presenting them here. For analogous large-$N$ expansions in the classical setting, we refer the reader to \cite{FRW17,WF14} and the references therein. 
\end{rem}

\begin{rem}[Upper constraint condition] \label{Rem_upper constraint}
   Compared with the classical case, the most notable feature of the large deviation principle for the $q$-deformed model is the presence of the upper constraint in \eqref{eq:prob_space}. We briefly explain how this constraint naturally emerges from the discrete nature of the model; see also the discussion at the end of \cite[Subsection~1.2.2]{DD22}.

    Since the model \eqref{def of jpdf for qL} lies on the exponential lattice $q^j$, there exists a sequence of $\ell_j\in \mathbb{N}$ such that $x_j = q^{\ell_j}$ and $|\ell_j - \ell_{k}|>1$ for all $j,k$. Suppose that the upper constraint is violated, so that there exists an interval $I=(a,b)$ and some $\varepsilon>0$ for which
    \begin{equation*}
        \mu(I)>\int_{a}^b \frac{1}{\lambda x} \, dx + \varepsilon = \frac{\log(b) -\log(a)}{\lambda} +\varepsilon\,.
    \end{equation*}
    Note that one can compute the measure of the set $I$ according to the empirical measure $\mu_N(dx)$: 
    \begin{align*}
          \mu_N(I) & = \frac{1}{N} \#\{j\in\mathbb{N} \,|\, a<x_j<b\}
          \\
          & = \frac{1}{N} \#\Big\{j\in\mathbb{N} \,|\, -N \frac{\log(b)}{\lambda}<\ell_j<-N \frac{\log(a)}{\lambda}\Big\} \leq \frac{1}{N}\Big(N\frac{\log(b) - \log(a)}{\lambda} + 1 \Big).
    \end{align*} 
    Therefore it follows that $  \mu(I) - \mu_N(I) > \varepsilon - 1/N.$ On the other hand, if $N> 2/\varepsilon$ the previous inequality reads $\mu(I) - \mu_N(I) > \varepsilon/2$, which in turn implies that $ d(\mu,\mu_N)>\varepsilon/2$ for any admissible distance $d$. This yields a contradiction. 
\end{rem}

\begin{rem}[Zero distributions of orthogonal polynomials] 
The zero distribution of orthogonal polynomials plays a central role in many aspects of the theory, most notably in describing their asymptotic behaviour. A well-established framework for determining the limiting zero distribution is available, and in our setting we follow the general approach developed by Kuijlaars and Van Assche \cite{KV99}; see also \cite{Ha18} and references therein. 
Although Theorem~\ref{Thm:limiting density in growth regime} (C) follows relatively directly from this classical theory, it is still useful to state it explicitly, as it confirms the limiting object obtained in parts (A) and (B) and clarifies how these results fit together with the additional viewpoints discussed above.
In addition, we mention that recent developments have introduced a complementary perspective 
based on finite free probability, offering further structural insights into such zero distributions; see \cite{JKM25,JKM25a}. 
\end{rem}

\begin{rem}[$\beta$-ensemble generalisation]
In analogy with the classical $\beta$-ensemble, one may consider
finite-temperature extensions of the joint probability density function
\eqref{def of jpdf for qL} of the $q$-Laguerre ensemble, in which the
interaction is deformed by an additional temperature parameter.
A natural example is the $(q,t)$-extension (see e.g. \cite{Ol21,BF25a}), which arises naturally from Macdonald polynomial theory and its underlying integrable structure. We also refer to \cite{HMO25} for a different type of deformation of the $q$-Laguerre ensemble.
In the regime $q^{-\lambda/N}$, the natural associated functional is given by 
    \begin{equation}
    \label{eq:functional_rhp_beta}
        I_\beta[\mu] : = - \frac{\beta}{2}\int_{0}^{1}\int_{0}^{1} \log(|x-y|)\, \mu(dx) \, \mu(dy)  + \int_0^1 \Big( \frac{1}{\lambda}\Li_2(x)-c \log x \Big)\, \mu(dx)\,,
    \end{equation}
where $\mu(dx)\in\mathcal{P}_{\lambda}(0,1)$.
Although the upper constraint on the support precludes obtaining the
minimiser by a trivial rescaling of our main result, the arguments developed in Subsection~\ref{Section_Thm B} extend in a straightforward manner to this setting. 

In addition, in the high-temperature regime where $\beta = O(1/N)$, the resulting Hamiltonian involves not only an energy functional of the form \eqref{eq:functional_rhp_beta} but also an entropy term; see e.g.  \cite{ABG12,ABMV12}. A recent study \cite{CD25} investigates such a high-temperature regime in discrete ensembles, and extending this analysis to the present setting appears to be an interesting direction for future research. 
\end{rem}

\begin{rem}[Local statistics and scaling limits]
One of the most natural directions for further investigation is the study of local statistics in $q$-deformed ensembles. Owing to the underlying determinantal structure, these local statistics can be analysed through the asymptotic behaviour of the associated orthogonal polynomials. In particular, in the regime $q^{-\lambda/N}$, the exponential $q$-lattice $\{1,q,q^2,\dots\}$ effectively behaves like a linear lattice. Consequently, one expects that the universal local statistics identified in \cite{BKMM} emerge at the bulk and soft edges of the band regions, as well as at the transition points where the band meets the saturated regions. (We also refer to \cite{BO24,GS25} for related developments in integrable probability, which---guided by the universality philosophy---are expected to be relevant to the present model, both in the hard-edge scaling limit and in the regime where the soft edge meets the hard edge.)
This constitutes an interesting direction that requires an asymptotic analysis of orthogonal polynomials in a double-scaling regime, for instance via the Riemann--Hilbert method, which we plan to pursue in future work.
\end{rem}

\begin{rem}[Connection to the last passage percolation] \label{Rem_LPP}
It is by now well understood that random matrix theory (or log gases) provides an effective framework for deriving fundamental properties of integrable probability models. One prominent example is the seminal work of Johansson~\cite{Joh00}, who used a bijection between last passage percolation (LPP) with i.i.d. geometric weights and the Meixner ensemble to obtain the law of large numbers, fluctuations, and large deviations of the last passage time. These ideas have since been developed in several variants~\cite{BR01,FW04}; see also the recent works~\cite{BS25,BCMS25,GS25,CMR26} and references therein. More recently, connections to classical random matrix ensembles have also emerged. In particular, the above LPP model is linked to the Jacobi unitary ensemble~\cite{BCMS25}. As a special case, replacing geometric weights by exponential ones yields a model related to the classical LUE. From this perspective, it is perhaps surprising that the law of large numbers for the last passage time can be derived directly from the Marchenko-Pastur law.

Beyond the classical ensembles, $q$-deformations play a significant role in integrable probability. In~\cite{BO24}, LPP with \emph{non-identically distributed} weights was related to a $q$-deformed model known as the discrete Muttalib--Borodin ensemble. Our model arises as a special case of this general framework, and, following the viewpoint of~\cite{BCMS25}, we expect that the law of large numbers for the last passage time in this broader setting should be governed by the $q$-deformed Marchenko-Pastur law. 
\end{rem}

\subsection*{Organisation of the paper} In Section~\ref{Section_moments combinatorics}, we introduce the combinatorial framework for orthogonal polynomials and prove Theorem~\ref{Thm:Spectralmoment}.
Section~\ref{Section_Thm main proof} is devoted to the proof of Theorem~\ref{Thm:limiting density in growth regime}. In particular, statements (A), (B), and (C) are established in Subsections~\ref{Subsec_Thm main (A)}, \ref{Section_Thm B}, and \ref{Subsec_Thm main (C)}, respectively.
In Appendix~\ref{app:integrals}, we collect several integral evaluations that play an important role in the analysis of Section~\ref{Section_Thm B}.

\subsection*{Acknowledgements}
Sung-Soo Byun was supported by the National Research Foundation of Korea grants (RS-2023-00301976, RS-2025-00516909).
We thank the organisers of the MATRIX program “Log-gases in Caeli Australi”, held in Creswick, Victoria, Australia in August 2025, for creating a stimulating environment that inspired the present collaboration. We also thank Peter J. Forrester and Jaeseong Oh for many inspiring discussions during the preparation of this work.

\section{Proof of Theorem~\ref{Thm:Spectralmoment}} \label{Section_moments combinatorics}

In this section, we prove Theorem~\ref{Thm:Spectralmoment}. 
Subsection~\ref{Subsection_FV theory} recalls the basic integrable structure  of random unitary ensembles together with the Flajolet--Viennot theory on the combinatorics of orthogonal polynomials. 
In Subsection~\ref{Subsection_LUE moment revisit}, we revisit the spectral
moments of the LUE and re-derive them within the Flajolet--Viennot framework.
This serves not only as a warm-up but also as an important intermediate step towards the general $q$-deformed setting. 
Subsections~\ref{Subsection_Them moment A} and~\ref{Subsection_Them moment B} are devoted to the proofs of Theorem~\ref{Thm:Spectralmoment}~(A) and~(B), respectively.

\subsection{Integrable structure of $q$-LUE and Flajolet-Viennot theory} \label{Subsection_FV theory}

For a given weight function $w : \R \to \R_{\ge 0}$, the $1$-point function of \eqref{def of joint eigenvalue of unitary ensemble} is defined by
\begin{equation}
    \rho_{N}(x)=\rho_{N,w}(x)=N\int_{\mathbb{R}^{N-1}}\mathbf{P}(x,x_2,\cdots,x_N)\,dx_2\cdots\,dx_{N}.
\end{equation}
The $1$-point function can be effectively analysed in terms of the associated orthogonal polynomials. Consider the sequence of monic orthogonal polynomials $\{P_{n}\}_{n\geq0}$ satisfying the orthogonality condition
\begin{equation}
    \int_{\mathbb{R}}P_n (x) P_m (x)w(x)\,dx =h_{n}\delta_{nm}.
\end{equation}
Then it is well known that 
\begin{equation}\label{def of 1point function with orthogonal polynomial}
    \rho_{N}(x)=\sum_{j=0}^{N-1}\frac{1}{h_{j}}P_{j}(x)^{2}w(x). %=\frac{P'_{N}(x)P_{N-1}(x)-P'_{N-1}(x)P_{N}(x)}{h_{N-1}}w(x),
\end{equation} 
Furthermore, the spectral moment $m_{N,p}$ in \eqref{def of spectral moments mNp} can be expressed using one-point function as 
\begin{equation}\label{eqn:spectral moment and 1point function}
    m_{N,p}=\int_{\mathbb{R}}x^{p}\rho_{N}(x)\,dx.
\end{equation}
Then, using the notation
\begin{equation}\label{def of mathfrak m}
    \mathfrak{m}_{p,j}= \frac{1}{h_{j}}\int_{\mathbb{R}}x^{p} P_{j}(x)^{2}w(x)\,dx,
\end{equation}
we have the orthogonal polynomial representation of the spectral moments
\begin{equation}\label{eqn: relation between m and mathfrak m}
    m_{N,p}= \sum_{j=0}^{N-1}\mathfrak{m}_{p,j}.
\end{equation}
These representations remain valid in the framework of the $q$-unitary ensemble and the $q$-orthogonal polynomial, where the usual integral is replaced with the Jackson $q$-integral \eqref{def of Jackson integral}.

The theory of Flajolet and Viennot \cite{Fl80,Vi00} establishes a correspondence between the moments of orthogonal polynomials and certain weighted lattice paths; see also \cite{CKS16} for a review. We briefly recall the aspects of this framework that are relevant to our discussion.
Within this theory, the fundamental structural feature of orthogonal polynomials is their three-term recurrence relation
\begin{equation}\label{eqn:three term recurrence of orthogonal polynomial}
    P_{n+1}(x)=(x-b_{n})P_{n}(x)-\lambda_{n}P_{n-1}(x),  
\end{equation}
where $\{b_n\}_{n\ge 0}$ and $\{\lambda_n\}_{n\ge 0}$ are associated coefficient sequences. These sequences determine the weights assigned to the corresponding lattice paths.

A lattice path is a finite sequence $w= (s_{0}, s_{1},\cdots ,s_{n})$, where $s_{j}=(x_j,y_j)\in\mathbb{Z}\times\mathbb{Z}$. We call an adjacent pair $(s_{j},s_{j+1})$ a \textit{step}. In particular, we refer to each step $(s_j ,s_{j+1})$ as follows:
\begin{itemize}
    \item \emph{East step} if $x_{j+1}=x_{j}+1$ and $y_{j+1}=y_{j}$;
    \smallskip 
    \item \emph{North-East step} if $x_{j+1}=x_{j}+1$ and $y_{j+1}=y_{j}+1$;
     \smallskip 
    \item \emph{South-East step} if $x_{j+1}=x_{j}+1$ and $y_{j+1}=y_{j}-1$. 
\end{itemize}
Furthermore, we say a step $(s_j,s_{j+1})$ is of height $k$ if $y_j=k$. 
A \emph{Motzkin path} is a lattice path in the first quadrant that consists 
only of East, North-East, and South-East steps. 
Given the coefficient sequences $\{b_n\}_{n\ge 0}$ and $\{\lambda_n\}_{n\ge 0}$, we assign weights to the steps of a Motzkin path as follows:
\begin{itemize}
    \item An East step of height $k$ has weight $b_k$;
     \smallskip 
    \item A North-East step has weight $1$;
     \smallskip 
    \item A South-East step of height $k$ has weight $\lambda_{k}$.
\end{itemize}
Then a weight of a given Motzkin path $\omega$ is defined as the product of the weights of every step in $\omega$, and denoted by $\mathrm{wt}_{b,\lambda}(\omega)$. See Figure~\ref{Fig_Motzkin}. 

\begin{figure}[h]
\centering
\begin{tikzpicture}[scale=0.8]
    \draw[step=1cm,gray!30,very thin] (0,0) grid (12,4);
    \draw[very thick, blue] (0,3) -- (1,3) ;
    \draw[very thick, blue] (1,3) -- (3,1) ;
    \draw[very thick, blue] (3,1) -- (4,1) ;
    \draw[very thick, blue] (4,1) -- (5,2) ;
    \draw[very thick, blue] (5,2) -- (7,2) ;
    \draw[very thick, blue] (7,2) -- (9,4) ;
    \draw[very thick, blue] (9,4) -- (12,1) ;
    \fill (0, 3) circle (2pt);
    \fill (1, 3) circle (2pt);
    \fill (2, 2) circle (2pt);
    \fill (3, 1) circle (2pt);
    \fill (4, 1) circle (2pt);
    \fill (5, 2) circle (2pt);
    \fill (6, 2) circle (2pt);
    \fill (7, 2) circle (2pt);
    \fill (8, 3) circle (2pt);
    \fill (9, 4) circle (2pt);
    \fill (10, 3) circle (2pt);
    \fill (11, 2) circle (2pt);
    \fill (12, 1) circle (2pt);
    \node at (0.5,3.2) {$b_{3}$};
    \node at (3.5,1.2) {$b_{1}$};
    \node at (5.5,2.2) {$b_{2}$};
    \node at (6.5,2.2) {$b_{2}$};
    \node at (1.7,2.7) {$\lambda_{3}$};
    \node at (2.7,1.7) {$\lambda_{2}$};
    \node at (9.7,3.7) {$\lambda_{4}$};
    \node at (10.7,2.7) {$\lambda_{3}$};
    \node at (11.7,1.7) {$\lambda_{2}$}; 
\end{tikzpicture}
\caption{A Motzkin path from $(0,3)$ to $(12,1)$ with weight $b_{1}b_{2}^{2}b_{3}\lambda_{2}^{2}\lambda_{3}^{2}\lambda_{4}$} \label{Fig_Motzkin}
\end{figure}
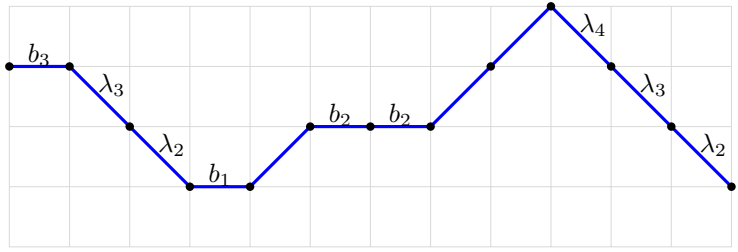

For nonnegative integers $n$, $k$ and $l$, let $\mathrm{Mot}_{n,k,l}$ denote the set of all Motzkin paths from $(0,k)$ to $(n,l)$. 
Then  $\mathfrak{m}_{p,j}$ in \eqref{def of mathfrak m} is given by  
\begin{equation} \label{mathfrak m in terms of Motzkin}
   \mathfrak{m}_{p,j}=\sum_{\omega\in \mathrm{Mot}_{p,j,j}}\mathrm{wt}_{b,\lambda}(\omega);
\end{equation}
see e.g. \cite[Proposition 3.2]{BFO24}. 
Then together with \eqref{eqn: relation between m and mathfrak m}, this yields the  the combinatorial enumeration of the spectral moments
\begin{equation}\label{Flajolet Viennot}
    m_{N,p}=\sum_{j=0}^{N-1}\sum_{\omega\in \mathrm{Mot}_{p,j,j}}\mathrm{wt}_{b,\lambda}(\omega). 
\end{equation} 
We aim to obtain a closed formula for the RHS of \eqref{mathfrak m in terms of Motzkin} by using a combinatorial approach.
We also mention the work of Kerov~\cite{Ke99}, who applies such a discrete path-counting method to compute the moments of the GUE.

\subsection{Spectral moments of the LUE revisited} \label{Subsection_LUE moment revisit}

In preparation for enumerating the spectral moments of the $q$-deformed LUE, it is instructive to first revisit how the spectral moments of the classical LUE can be derived within our framework. 
In the next section, we will see that the combinatorial interpretation for the 
$q$-LUE arises naturally by imposing additional statistics on the 
underlying combinatorial structure of the original LUE.

We now state the specialisation of Theorem~\ref{Thm:Spectralmoment} (A) 
to the classical LUE case.
 
\begin{prop}[\textbf{Spectral moments of the LUE}]  \label{prop:spetral moment of LUE}
Let $m_{N,p}^{(\mathrm L)}$ be the spectral moment of the LUE. Then for any nonnegative integers $p,\alpha$ and $N \in \mathbb{N}$, we have 
\begin{equation}  \label{evaluation of LUE moments}
 m_{N,p}^{(\mathrm L)} = p! \sum_{j=0}^{N-1}\sum_{i=0}^{p} \binom{p}{i}  \binom{\alpha+j}{i}   \binom{p-i+j}{j}. 
\end{equation} 
\end{prop}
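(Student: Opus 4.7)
The plan is to prove \eqref{evaluation of LUE moments} via the Flajolet--Viennot machinery \eqref{Flajolet Viennot}. First I would rewrite the three-term recurrence \eqref{eqn:three term of laguerre} in monic form by setting $\widetilde{L}_n(x) := (-1)^n n!\, L_n^{(\alpha)}(x)$, which yields
\begin{equation*}
\widetilde{L}_{n+1}(x) = (x-b_n)\widetilde{L}_n(x) - \lambda_n \widetilde{L}_{n-1}(x), \qquad b_n = 2n+\alpha+1, \quad \lambda_n = n(n+\alpha).
\end{equation*}
Combining this with \eqref{mathfrak m in terms of Motzkin} and \eqref{eqn: relation between m and mathfrak m} gives
\begin{equation*}
m_{N,p}^{(\mathrm L)} = \sum_{j=0}^{N-1}\sum_{\omega\in\mathrm{Mot}_{p,j,j}} \mathrm{wt}_{b,\lambda}(\omega),
\end{equation*}
where each East step of height $k$ carries weight $2k+\alpha+1$, each South-East step of height $k$ carries weight $k(k+\alpha)$, and each North-East step carries weight $1$. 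It therefore suffices to show
\begin{equation*}
\sum_{\omega\in\mathrm{Mot}_{p,j,j}} \mathrm{wt}_{b,\lambda}(\omega) = p!\sum_{i=0}^{p}\binom{p}{i}\binom{\alpha+j}{i}\binom{p-i+j}{j}
\end{equation*}
for every $0\le j\le N-1$.

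Next, I would decompose the weights combinatorially: splitting $b_k = k + (k+\alpha+1)$ and $\lambda_k = k\cdot(k+\alpha)$, the weighted count is identified with the cardinality of the set of \emph{decorated} Motzkin paths in which each East step of height $k$ receives either a ``small'' label in $[k]$ or a ``large'' label in $[k+\alpha+1]$, and each South-East step of height $k$ receives an ordered pair consisting of a small label in $[k]$ and a large label in $[k+\alpha]$. Such decorated Motzkin paths form a variant of \emph{Laguerre histories}; via classical bijective techniques (see e.g.\ the Foata--Strehl and Fran\c{c}on--Viennot correspondences reviewed in \cite{CKS16}), the small-label data on the $p$ steps can be transported into a permutation of $[p]$ (accounting for the factor $p!$), while the large-label data together with the shape of the underlying unlabeled path can be repackaged into subset data on fixed ambient sets.

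The main obstacle is to carry out this bijection explicitly and to identify the right statistic: the index $i$ on the right-hand side should count the total number of large labels, with $\binom{p}{i}$ recording the positions of the steps carrying them, $\binom{\alpha+j}{i}$ recording the large labels themselves after re-indexing relative to the base height $j$, and $\binom{p-i+j}{j}$ encoding how the remaining undecorated skeleton interleaves with the $j$ levels below the starting height. Once this identification is verified for a single $j$, summing over $j=0,\dots,N-1$ yields \eqref{evaluation of LUE moments}. As the excerpt emphasises, this argument is designed as a classical template for Theorem~\ref{Thm:Spectralmoment}~(A): refining the same bijection by an inversion statistic on the permutation and a height statistic on the decorated steps replaces $p!$ by $[p]_q!$, binomials by $q$-binomials, and produces the extra weight $q^{(p-i)(\alpha-i)+pj}$ appearing in \eqref{eqn:m}.
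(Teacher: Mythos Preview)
Your strategy coincides with the paper's: both start from \eqref{Flajolet Viennot} with $b_n=2n+\alpha+1$, $\lambda_n=n(n+\alpha)$, split the step weights into two kinds of labels, and aim to reinterpret the resulting decorated Motzkin paths bijectively. The substantive difference is in how the bijection is executed. Where you invoke ``classical bijective techniques'' (Foata--Strehl, Fran\c{c}on--Viennot) and leave the identification of the right statistic as ``the main obstacle'', the paper introduces a concrete target model: \emph{bipartite matchings} on a two-row vertex set $T_-\cup T_+\cup B_-\cup B_+$ with $|T_-|=\alpha+j$, $|T_+|=|B_+|=p$, $|B_-|=j$, and no $T_-$--$B_-$ edge. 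An explicit step-by-step correspondence (Lemma~\ref{Lem_Laguerre history bijection}) sends Laguerre histories in $\mathrm{Mot}_{p,j,j}$ to matchings with exactly $p$ edges, after which the count is done directly (Lemma~\ref{Lem_mathcing counting LUE}) and simplified by Vandermonde.

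There is a genuine gap in your sketch at the bijective step. Your proposed reading of $i$ as ``the total number of large labels'' with $\binom{p}{i}$ recording the \emph{positions} of steps carrying them does not hold up: every South--East step necessarily carries a large label, so those positions are not free to choose. In the paper's bijection $i$ is instead the number of edges whose top endpoint lies in $T_-$, a quantity that depends on the dynamics of the construction and is not read off from the label type alone. Also, the Fran\c{c}on--Viennot correspondence is formulated for paths based at height~$0$; handling the base height~$j$ is exactly what forces the extra negative-vertex blocks $T_-,B_-$ in the paper's model, and is not a cosmetic adjustment. Your final remark---that refining by an inversion-type statistic yields the $q$-version---is indeed the paper's plan (Proposition~\ref{Prop_m har q cr stat} with the crossing statistic on bipartite matchings), but making that precise again rests on having the explicit bijection in hand.
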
 
 
We note that in the literature, some equivalent formulae for the spectral moment of LUE are investigated. For instance, we have 
\begin{align}
\begin{split}
\label{eqn:LUE moment 2}
    m_{N,p}^{(\mathrm{L})}&=\frac{1}{p}\sum_{i=1}^{p}(-1)^{i-1}\frac{(N+\alpha+p-i)_{p}(N+p-i)_{p}}{(p-i)!(i-1)!}
    \\
    &= N(N+\alpha)(p-1)!\sum_{j=0}^{\floor{\frac{p-1}{2}}}\frac{1}{j+1}\binom{N-1}{j}\binom{N+\alpha-1}{j}\binom{2N+\alpha+p-2j-2}{p-2j-1};
\end{split}
\end{align} 
see \cite[Theorem 2.5]{HSS92}, \cite[Proposition 9.1]{HT03}, \cite[Appendix A]{MRW15} and \cite[Proposition 3.11]{FRW17}. We also refer to a recent work \cite{FN26} for the corresponding results in the fixed-trace version of the LUE and for their applications to quantum information theory.

\medskip 

As noted earlier, the orthogonal polynomials corresponding to the Laguerre 
weight $w^{(\mathrm{L})}$ are the generalised Laguerre polynomials. 
They satisfy the orthogonality relation 
\begin{equation}
    \int_{0}^{\infty}L_{n}^{(\alpha)}(x)L_{m}^{(\alpha)}(x)w^{(\mathrm{L})}(x)\,dx=h^{(\mathrm{L})}_{n}\delta_{nm},\qquad h_{n}^{(\mathrm{L})}=\frac{\Gamma(n+\alpha+1)}{n!}.
\end{equation} 
Then it follows from \eqref{mathfrak m in terms of Motzkin} that
\begin{equation}\label{eqn:mathfrakml=wtl}    \mathfrak{m}_{p,j}^{(\mathrm{L})}:=\int_{0}^{\infty}\frac{x^{p}L^{(\alpha)}_{j}(x)^{2}}{h_{j}^{(\mathrm{L})}}w^{(\mathrm{L})}(x)\,dx =\sum_{\omega\in\mathrm{Mot}_{p,j,j}}\mathrm{wt}_{b,\lambda}(\omega),
\end{equation}
where, by \eqref{eqn:three term of laguerre}, two sequences are given by
\begin{equation} \label{bn lambdan for Laguerre}
    b_{n}=2n+\alpha+1,\qquad \lambda_{n}=n(n+\alpha). 
\end{equation} 
However, there is no general method to directly count the total weight over all weighted Motzkin paths. To overcome this difficulty, we introduce a different combinatorial model that allows for explicit enumeration. Specifically, we present an alternative way using a new combinatorial notion called \emph{bipartite matching}. We remark that the bijection introduced below is somewhat more intricate---but also more novel---than those previously used in \cite{BFO24,BJO25} for the $q$-deformed unitary ensembles with discrete Gaussian or Al Salam--Carlitz weights. Whereas the earlier constructions relied on single-row matchings, the combinatorial structure underlying the $q$-LUE naturally leads us to a two-row matching framework; see Figure~\ref{fig:comparison_models} and \cite[Figure~4]{BFO24}.

For nonnegative integers $n$, $j$ and $\alpha$, define a set
\begin{equation}
    \mathcal{S}^{\alpha}(n,j)=T_{-}\cup T_{+}\cup B_{-}\cup B_{+},
\end{equation}
where
\begin{equation}
    T_{-}=\{-\alpha-j,\cdots,-1\},\qquad T_{+}=\{1,\cdots,n\},\qquad B_{-}=\{-\widetilde{j},\cdots,-\widetilde{1}\},\qquad B_{+}=\{\widetilde{1},\cdots,\widetilde{n}\}.
\end{equation}
We refer to each element of $\mathcal{S}^{\alpha}(n,j)$ as a \emph{vertex}. 
The vertices are classified as follows:
\begin{itemize}
    \item Vertices in $T_{-}\cup B_{-}$ are called \emph{negative}, and all others are called \emph{positive}; 
    \smallskip
    \item Vertices in $T_{-} \cup T_{+}$ lie in the \emph{top row}, and those in $B_{-}\cup B_{+}$ in the \emph{bottom row}.
\end{itemize}
A bipartite matching is a set partition on $\mathcal{S}^{\alpha}(n,j)$ consisting of
\begin{itemize}
    \item singleton sets, called \emph{isolated vertices};
    \smallskip 
    \item set of two elements $a\in\ T_{-}\cup T_{+}$ and $\tilde{b}\in B_{-}\cup B_{+}$, called \emph{edges}.
\end{itemize}

Let $M_{n,j}^{(\alpha)}$ denote the set of all bipartite matchings on $\mathcal{S}^{\alpha}(n,j)$ satisfying the following condition: there is no edge $(a,\tilde{b})$ with $a\in T_{-}$ and $\widetilde{b}\in B_{-}$. Pictorially, a bipartite matching $M \in M_{n,j}^{(\alpha)}$ can be represented by two horizontal rows of vertices---($j+n+\alpha$) in the top row and $(j+n)$ in the bottom row---together with a collection of edges between them. The above condition requires that no edge connects $\alpha+j$ negative vertices in the top row to $j$ negative vertices in the bottom row; see Figure~\ref{fig:bipartite matching}.

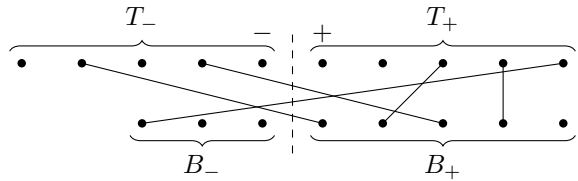
\begin{figure}[h]
    \centering
    \begin{tikzpicture}[scale=0.8]
        \foreach \i in {1,2,...,10}{\fill[black] (\i, 1) circle (2pt);}
        \foreach \i in {3,4,...,10}{\fill[black] (\i, 0) circle (2pt);}
        \draw[decorate,decoration={brace,amplitude=4pt}] (0.8,1.2) -- (5.2,1.2) node[midway,above=4pt] {$T_{-}$};
        \draw[decorate,decoration={brace,amplitude=4pt}] (5.8,1.2) -- (10.2,1.2) node[midway,above=4pt] {$T_{+}$};
        \draw[decorate,decoration={brace,mirror,amplitude=4pt}] (2.8,-0.2) -- (5.2,-0.2) node[midway,below=4pt] {$B_{-}$};
        \draw[decorate,decoration={brace,mirror,amplitude=4pt}] (5.8,-0.2) -- (10.2,-0.2) node[midway,below=4pt] {$B_{+}$};
        \draw[black] (6,0) -- (2,1);
        \draw[black] (8,1) -- (7,0);
        \draw[black] (4,1) -- (8,0);
        \draw[black] (9,1) -- (9,0);
        \draw[black] (10,1) -- (3,0);
        \draw[dashed] (5.5,-0.5) -- (5.5,1.5);
        \node at (5,1.5) {$-$};
        \node at (6,1.5) {$+$};
    \end{tikzpicture}
    \caption{A bipartite matching in $M_{5,3}^{(2)}$. The dashed vertical line separates negative and positive vertices. No edge connects a vertex in $T_{-}$ to a vertex in $B_{-}$.}
    \label{fig:bipartite matching}
\end{figure}

To construct a bijection between the frameworks of a Motzkin path and 
a bipartite matching, we introduce the notion of a \emph{Laguerre history}. 
This is obtained by assigning labels to the East and South--East steps 
of a Motzkin path as follows. 
For integers $0 \le m \le k+\alpha$ and $1 \le l \le k$,
\begin{itemize}
    \item an East step at height $k$ is labeled by either $m$ or $\widetilde{l}$;
    \smallskip 
    \item a South--East step at height $k$ is labeled by a pair 
    $[m,\widetilde{l}]$.
\end{itemize} 
Thus, there are $(2k+\alpha+1)$ possible labels for an East step and 
$k(k+\alpha)$ possible labels for a South--East step.

With this labeling rule, we obtain the following enumeration of 
Motzkin-path weights in terms of bipartite matchings.

\begin{lem} \label{Lem_Laguerre history bijection}
Let $M_{n,j}^{(\alpha)}(l)$ be the subset of matchings in $M_{n,j}^{(\alpha)}$ 
that contain exactly $l$ edges. Let $b_n$ and $\lambda_n$ be given by \eqref{bn lambdan for Laguerre}. Then we have
    \begin{equation} \label{eqn in Lem Laguerre history}
       \sum_{\omega\in\mathrm{Mot}_{p,j,j}}\mathrm{wt}_{b,\lambda}(\omega) =  \Big|M_{p,j}^{(\alpha)}(p)\Big| .
    \end{equation}
\end{lem}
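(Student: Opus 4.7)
My plan is to prove \eqref{eqn in Lem Laguerre history} by constructing an explicit bijection $\Phi$ between Laguerre histories of length $p$ from height $j$ to height $j$ and matchings in $M_{p,j}^{(\alpha)}(p)$. First, I will observe that the left-hand side of \eqref{eqn in Lem Laguerre history} equals the total number of Laguerre histories: the labelling rules assign exactly $b_k=2k+\alpha+1$ options to each East step at height $k$ and exactly $\lambda_k=k(k+\alpha)$ options to each South--East step at height $k$, so the number of labellings of a fixed $\omega\in\mathrm{Mot}_{p,j,j}$ coincides with $\mathrm{wt}_{b,\lambda}(\omega)$.

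To define $\Phi$, I will process the $p$ steps of the path from left to right, maintaining two ordered queues $Q_T\subseteq T_-\cup T_+$ and $Q_B\subseteq B_-\cup B_+$, initialised as $Q_T=T_-$ and $Q_B=B_-$ in their natural orders, with the running invariant $|Q_T|=k+\alpha$ and $|Q_B|=k$ whenever the path is at height $k$. At column $i$ the rules are: an NE step appends $i$ to $Q_T$ and $\widetilde{i}$ to $Q_B$ and places no edge; an E step with label $m=0$ places the edge $(i,\widetilde{i})$; an E step with label $m\in\{1,\ldots,k+\alpha\}$ places the edge $(q_m,\widetilde{i})$, where $q_m$ is the $m$-th entry of $Q_T$, and replaces $q_m$ by $i$ in $Q_T$ (and symmetrically for a label $\widetilde{l}\in\{\widetilde{1},\ldots,\widetilde{k}\}$ on $Q_B$); an SE step with label $[m,\widetilde{l}]$ places the two edges $(q_m,\widetilde{i})$ and $(i,\widetilde{q}_l)$, removing $q_m$ from $Q_T$ and $\widetilde{q}_l$ from $Q_B$. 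Any vertex left in the queues at the end is declared isolated in the output matching.

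I will then verify $\Phi(\omega,\ell)\in M_{p,j}^{(\alpha)}(p)$: the number of edges placed equals $0\cdot n_{NE}+1\cdot n_E+2\cdot n_{SE}=p$ since $n_{NE}=n_{SE}$ for a closed Motzkin path, and no edge ever connects a $T_-$-vertex to a $B_-$-vertex because every edge placed by $\Phi$ contains at least one endpoint in $\{i,\widetilde{i}\}\subseteq T_+\cup B_+$. Bijectivity will follow by constructing the inverse: given $M\in M_{p,j}^{(\alpha)}(p)$, at each column $i$ the pair of statuses describing how $i$ and $\widetilde{i}$ appear in $M$ (isolated, matched to the same column, matched to an earlier column or negative vertex, or matched to a later column) uniquely determines the Motzkin step type -- both matched to the same column gives E with $m=0$, both ``earlier'' gives SE, mixed ``earlier/later/isolated'' configurations give one of the two single-sided E labels, and the remaining ``later/isolated'' cases give NE. The labels are then recovered from the position of the partner vertex in the queue at that moment.

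The main obstacle is the case-by-case verification that this decoding is compatible with the queue-size invariant, in particular that the reconstructed height sequence stays non-negative for every $M\in M_{p,j}^{(\alpha)}(p)$; this reduces to showing that whenever SE is decoded at column $i$, the two earlier partners of $i$ and $\widetilde{i}$ are still present in $Q_T$ and $Q_B$ at that moment, which follows from the exclusion of $T_-\times B_-$-edges together with the natural ordering on $T_\pm,B_\pm$. Once this is checked, each Laguerre history maps to a unique matching in $M_{p,j}^{(\alpha)}(p)$ and each such matching has a unique preimage, yielding \eqref{eqn in Lem Laguerre history}.
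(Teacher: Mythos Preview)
The proposal is correct and follows essentially the same approach as the paper: both construct a step-by-step bijection between Laguerre histories on $\mathrm{Mot}_{p,j,j}$ and matchings in $M_{p,j}^{(\alpha)}(p)$, using the step labels to pick partners for the newly introduced column-$i$ vertices from a running list of available top/bottom vertices. Your queue update rule for an East step with label $m$ (replacing $q_m$ by $i$ in place) differs slightly from the paper's (which keeps the isolated vertices in their natural left-to-right order, i.e.\ removes $q_m$ and appends $i$ at the right), so you obtain a different but equally valid bijection; note that the paper's choice is the one that later makes the crossing statistic in Proposition~\ref{Prop_m har q cr stat} come out cleanly, whereas your variant would require a different statistic in the $q$-deformed setting.
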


\begin{proof}
The lemma follows from a one-to-one correspondence between Laguerre 
histories on Motzkin paths in $\mathrm{Mot}_{p,j,j}$ and bipartite 
matchings in $M_{p,j}^{(\alpha)}(p)$. 
We construct this correspondence as a dynamical bijection in the 
following manner: as we traverse the steps of a Motzkin path, we 
sequentially add vertices and edges to the bipartite matching according 
to the step type and its assigned label. Each rule is illustrated in  Figure~\ref{fig:comparison_models}. 
\begin{itemize}
    \item[(S1)] Initially, place $\alpha$ vertices horizontally along the top row.  
        \smallskip 
    \item[(S2)] Add $j$ vertices to each of the top and bottom rows, 
        corresponding to the steps from $(0,0)$ to $(0,j)$. 
        \smallskip 
    \item[(S3)] For an East step of height $k$ labeled $m$ with 
        $1 \le m \le k+\alpha$, introduce an edge connecting the newly 
        added vertex in the bottom row to the $m$-th isolated vertex from 
        the left in the top row.  
    \smallskip
    \item[(S4)] For a North–East step, simply add the two new vertices to the top and bottom rows, but do not introduce any edge.  
    \smallskip 
    \item[(S5)] For a South--East step of height $k$ labeled 
        $[m,\widetilde{l}]$, introduce two edges: one connecting the newly 
        added bottom vertex to the $m$-th isolated top vertex, and another 
        connecting the newly added top vertex to the $l$-th isolated bottom vertex. 
        \smallskip 
        \item[(S6)] For an East step labeled $0$, add a vertical edge between the two new vertices. 
        \smallskip 
        \item[(S7)] For an East step of height $k$ labeled $\widetilde{l}$ with $1 \le l \le k$, introduce an edge connecting the newly added vertex in the top row to the $l$-th isolated vertex from the left in the bottom row. 
\end{itemize}

Since the Motzkin path runs from $(0,j)$ to $(p,j)$, the numbers of 
South--East and North--East steps coincide, so the resulting bipartite 
matching contains exactly $p$ edges. Moreover, the height of each step 
records the number of isolated vertices in the top and bottom rows. 
Thus the construction is invertible, yielding a bijection between all 
Laguerre histories on $\mathrm{Mot}_{p,j,j}$ and the matchings in 
$M_{p,j}^{(\alpha)}(p)$.
\end{proof}

\begin{figure}[t]   \centering  
    \begin{tabular}{|C{1cm}|C{6cm}|C{6cm}|}
        \hline
       \rule{0pt}{0.75cm} 
       \cellcolor{gray!25}  & \cellcolor{gray!10} Motzkin paths  & \cellcolor{gray!10} Bipartite matchings
        \\
        \hline
        \cellcolor{gray!10} (S1)
        &   
        \medskip 
        \begin{tikzpicture}[scale=0.6]
            \draw[step=1cm,gray!30,very thin] (0,0) grid (5,4);
        \end{tikzpicture}
        &      
        \begin{tikzpicture}[scale=0.6]
            \foreach \i in {1,2,...,10}{\fill[black] (\i, 0) circle (0pt);}
            \foreach \i in {1,2,...,10}{\fill[black] (\i, 1) circle (0pt);}
            \foreach \i in {1,2}{\fill[red] (\i, 1) circle (2pt);}
        \end{tikzpicture}
         \\
        \hline 
               \cellcolor{gray!10} (S2)
        &    
        \medskip 
        \begin{tikzpicture}[scale=0.6]
            \draw[step=1cm,gray!30,very thin] (0,0) grid (5,4);
            \draw[very thick, blue] (0,0) -- (0,3) ;
        \end{tikzpicture}
        &     
\begin{tikzpicture}[scale=0.6]
    \foreach \i in {1,2,...,10}{\fill[black] (\i, 0) circle (0pt);}
    \foreach \i in {1,2,...,10}{\fill[black] (\i, 1) circle (0pt);}
    \foreach \i in {1,2}{\fill[red] (\i, 1) circle (2pt);}
    \foreach \i in {3,4,5}{\fill[blue] (\i, 0) circle (2pt);}
    \foreach \i in {3,4,5}{\fill[blue] (\i, 1) circle (2pt);}
\end{tikzpicture}
         \\
        \hline 
                       \cellcolor{gray!10} (S3)
        &    
        \medskip 
\begin{tikzpicture}[scale=0.6]
    \draw[step=1cm,gray!30,very thin] (0,0) grid (5,4);
    \draw[very thick] (0,0) -- (0,3) ;
    \draw[very thick, blue] (0,3) -- (1,3) ;
    \node[blue] at (0.5,2.6) {${2}$};
\end{tikzpicture}
        &    
\begin{tikzpicture}[scale=0.6]
    \foreach \i in {1,2,...,10}{\fill[black] (\i, 0) circle (0pt);}
    \foreach \i in {1,2,...,10}{\fill[black] (\i, 1) circle (0pt);}
    \foreach \i in {1,2}{\fill[red] (\i, 1) circle (2pt);}
    \foreach \i in {3,4,5}{\fill[black] (\i, 0) circle (2pt);}
    \foreach \i in {3,4,5}{\fill[black] (\i, 1) circle (2pt);}
    \fill[blue] (6,0) circle (2pt);
    \fill[blue] (6,1) circle (2pt);
    \draw[blue] (6,0) -- (2,1);
\end{tikzpicture}
         \\
        \hline 
                              \cellcolor{gray!10} (S4)
        &    
        \medskip 
 \begin{tikzpicture}[scale=0.6]
    \draw[step=1cm,gray!30,very thin] (0,0) grid (5,4);
    \draw[very thick] (0,0) -- (0,3) ;
    \draw[very thick] (0,3) -- (1,3) ;
    \node at (0.5,2.6) {${2}$};
    \draw[very thick, blue] (1,3) -- (2,4);
\end{tikzpicture}
        &     
\begin{tikzpicture}[scale=0.6]
    \foreach \i in {1,2,...,10}{\fill[black] (\i, 0) circle (0pt);}
    \foreach \i in {1,2,...,10}{\fill[black] (\i, 1) circle (0pt);}
    \foreach \i in {1,2}{\fill[red] (\i, 1) circle (2pt);}
    \foreach \i in {3,4,5}{\fill[black] (\i, 0) circle (2pt);}
    \foreach \i in {3,4,5}{\fill[black] (\i, 1) circle (2pt);}
    \fill[black] (6,0) circle (2pt);
    \fill[black] (6,1) circle (2pt);
    \draw[black] (6,0) -- (2,1);
    \fill[blue] (7,0) circle (2pt);
    \fill[blue] (7,1) circle (2pt);
\end{tikzpicture}
         \\
        \hline 
                                    \cellcolor{gray!10} (S5)
        &   
        \medskip 
 \begin{tikzpicture}[scale=0.6]
    \draw[step=1cm,gray!30,very thin] (0,0) grid (5,4);
    \draw[very thick] (0,0) -- (0,3) ;
    \draw[very thick] (0,3) -- (1,3) ;
    \node at (0.5,2.6) {${2}$};
    \draw[very thick] (1,3) -- (2,4);
    \draw[very thick, blue] (2,4) -- (3,3);
    \node[blue] at (2.15,3) {[3,$\widetilde{4}$]};
\end{tikzpicture} 
        &    
\begin{tikzpicture}[scale=0.6]
    \foreach \i in {1,2,...,10}{\fill[black] (\i, 0) circle (0pt);}
    \foreach \i in {1,2,...,10}{\fill[black] (\i, 1) circle (0pt);}
    \foreach \i in {1,2}{\fill[red] (\i, 1) circle (2pt);}
    \foreach \i in {3,4,5}{\fill[black] (\i, 0) circle (2pt);}
    \foreach \i in {3,4,5}{\fill[black] (\i, 1) circle (2pt);}
    \fill[black] (6,0) circle (2pt);
    \fill[black] (6,1) circle (2pt);
    \draw[black] (6,0) -- (2,1);
    \fill[black] (7,0) circle (2pt);
    \fill[black] (7,1) circle (2pt);
    \fill[blue] (8,0) circle (2pt);
    \fill[blue] (8,1) circle (2pt);
    \draw[blue] (8,1) -- (7,0);
    \draw[blue] (4,1) -- (8,0);
\end{tikzpicture}
         \\
        \hline 
                                            \cellcolor{gray!10} (S6)
        &   
        \medskip 
\begin{tikzpicture}[scale=0.6]
    \draw[step=1cm,gray!30,very thin] (0,0) grid (5,4);
    \draw[very thick] (0,0) -- (0,3) ;
    \draw[very thick] (0,3) -- (1,3) ;
    \node at (0.5,2.6) {${2}$};
    \draw[very thick] (1,3) -- (2,4);
    \draw[very thick] (2,4) -- (3,3);
    \node at (2.15,3) {[3,$\widetilde{4}$]};
    \draw[very thick, blue] (3,3) -- (4,3);
    \node[blue] at (3.5,2.6) {${0}$};
\end{tikzpicture} 
        &     
\begin{tikzpicture}[scale=0.6]
    \foreach \i in {1,2,...,10}{\fill[black] (\i, 0) circle (0pt);}
    \foreach \i in {1,2,...,10}{\fill[black] (\i, 1) circle (0pt);}
    \foreach \i in {1,2}{\fill[red] (\i, 1) circle (2pt);}
    \foreach \i in {3,4,5}{\fill[black] (\i, 0) circle (2pt);}
    \foreach \i in {3,4,5}{\fill[black] (\i, 1) circle (2pt);}
    \fill[black] (6,0) circle (2pt);
    \fill[black] (6,1) circle (2pt);
    \draw[black] (6,0) -- (2,1);
    \fill[black] (7,0) circle (2pt);
    \fill[black] (7,1) circle (2pt);
    \fill[black] (8,0) circle (2pt);
    \fill[black] (8,1) circle (2pt);
    \draw[black] (8,1) -- (7,0);
    \draw[black] (4,1) -- (8,0);
    \fill[blue] (9,0) circle (2pt);
    \fill[blue] (9,1) circle (2pt);
    \draw[blue] (9,1) -- (9,0);
\end{tikzpicture}
         \\
        \hline 
                                          \cellcolor{gray!10} (S7)
        &    
        \medskip 
    \begin{tikzpicture}[scale=0.6]
    \draw[step=1cm,gray!30,very thin] (0,0) grid (5,4);
    \draw[very thick] (0,0) -- (0,3) ;
    \draw[very thick] (0,3) -- (1,3) ;
    \node at (0.5,2.6) {${2}$};
    \draw[very thick] (1,3) -- (2,4);
    \draw[very thick] (2,4) -- (3,3);
    \node at (2.15,3) {[3,$\widetilde{4}$]};
    \draw[very thick] (3,3) -- (4,3);
    \node at (3.5,2.6) {${0}$};
    \draw[very thick, blue] (4,3) -- (5,3);
    \node[blue] at (4.5,2.6) {$\widetilde{1}$};
\end{tikzpicture} 
        &     
\begin{tikzpicture}[scale=0.6]
    \foreach \i in {1,2,...,10}{\fill[black] (\i, 0) circle (0pt);}
    \foreach \i in {1,2,...,10}{\fill[black] (\i, 1) circle (0pt);}
    \foreach \i in {1,2}{\fill[red] (\i, 1) circle (2pt);}
    \foreach \i in {3,4,5}{\fill[black] (\i, 0) circle (2pt);}
    \foreach \i in {3,4,5}{\fill[black] (\i, 1) circle (2pt);}
    \fill[black] (6,0) circle (2pt);
    \fill[black] (6,1) circle (2pt);
    \draw[black] (6,0) -- (2,1);
    \fill[black] (7,0) circle (2pt);
    \fill[black] (7,1) circle (2pt);
    \fill[black] (8,0) circle (2pt);
    \fill[black] (8,1) circle (2pt);
    \draw[black] (8,1) -- (7,0);
    \draw[black] (4,1) -- (8,0);
    \fill[black] (9,0) circle (2pt);
    \fill[black] (9,1) circle (2pt);
    \draw[black] (9,1) -- (9,0);
    \fill[blue] (10,0) circle (2pt);
    \fill[blue] (10,1) circle (2pt);
    \draw[blue] (10,1) -- (3,0);
\end{tikzpicture}
         \\
        \hline 
    \end{tabular}
\caption{A Laguerre history from $(0,3)$ to $(5,3)$ and its corresponding matching in $M_{5,3}^{(2)}$}
\label{fig:comparison_models}
\end{figure}

Next, we count the RHS of \eqref{eqn in Lem Laguerre history}. 

\begin{lem} \label{Lem_mathcing counting LUE}
For any nonnegative integers $\alpha,p $ and $j$, we have  
\begin{equation} \label{matching counting LUE}
\Big|M_{p,j}^{(\alpha)}(p)\Big|= p! \sum_{i=0}^{p} \binom{p}{i}  \binom{\alpha+j}{i}   \binom{p-i+j}{j}.
\end{equation}
\end{lem}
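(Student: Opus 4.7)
The plan is to partition the matchings in $M_{p,j}^{(\alpha)}(p)$ according to the statistic $i$ recording the number of edges incident to a vertex of $T_-$. Because the defining restriction on $M_{p,j}^{(\alpha)}$ forbids edges from $T_-$ to $B_-$, every one of these $i$ edges must terminate in $B_+$. This local constraint is precisely what causes the enumeration to factorise into two independent blocks: one involving the vertices of $T_-$ and the other involving those of $T_+$.

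Given $i$, I would first enumerate the $i$ ``exceptional'' edges from $T_-$ to $B_+$. Their top endpoints can be chosen in $\binom{\alpha+j}{i}$ ways, their bottom endpoints in $\binom{p}{i}$ ways, and the two sets can be paired up in $i!$ ways. The remaining $p-i$ edges must then have their top endpoint in $T_+$, since every matched vertex of $T_-$ has already been accounted for. On the bottom side, the $p-i$ unused vertices of $B_+$ together with all $j$ vertices of $B_-$ remain available, giving $j+p-i$ eligible targets. Selecting the $p-i$ top endpoints from $T_+$ in $\binom{p}{p-i}$ ways, the $p-i$ bottom endpoints in $\binom{j+p-i}{p-i}$ ways, and pairing them in $(p-i)!$ ways completes this block.

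Summing the product over $i$ and simplifying via the identity $\binom{p}{i}\binom{p}{p-i}\,i!\,(p-i)! = p!\,\binom{p}{i}$ together with $\binom{j+p-i}{p-i}=\binom{j+p-i}{j}$ then produces exactly the right-hand side of \eqref{matching counting LUE}. I do not anticipate any substantive obstacle in executing this plan; the only mild subtlety is to verify that, after removing the $i$ edges incident to $T_-$, the remaining sub-matching is unconstrained apart from having its top endpoints in $T_+$ and its bottom endpoints in $B_+\cup B_-$, which is immediate from the definition of $M_{p,j}^{(\alpha)}$. A short sanity check at $p=1$ (giving $1+\alpha+2j$ from both sides) and at $p=2,\,j=0$ (giving $(\alpha+1)(\alpha+2)$) confirms that the proposed bookkeeping is consistent.
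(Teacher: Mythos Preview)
Your argument is correct, and in fact it is slightly more direct than the paper's own proof. The paper decomposes the matchings by \emph{two} parameters: $i$ (the number of edges incident to $T_{-}$) and an additional index $l$ (the total number of edges incident to $T_{-}\cup B_{-}$). This produces the intermediate double sum
\[
\Big|M_{p,j}^{(\alpha)}(p)\Big|
= p!\,\sum_{l=0}^{p}\sum_{i=0}^{l}\binom{p}{i}\binom{p-i}{l-i}\binom{\alpha+j}{i}\binom{j}{l-i},
\]
which is then collapsed to the single sum via the classical Vandermonde identity. Your decomposition records only $i$ and treats the remaining $p-i$ edges from $T_{+}$ to $B_{+}\cup B_{-}$ as a single unconstrained block, which yields the final formula immediately without Vandermonde. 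The reason the paper takes the longer route is that the finer $(i,l)$-decomposition is exactly what is reused in the $q$-deformed Lemma~\ref{Lem_q cr evaluation}, where the crossing statistic makes it natural to track separately the edges landing in $B_{-}$; there the analogous double sum is collapsed by $q$-Vandermonde. For the present $q=1$ lemma taken in isolation, your approach is the more economical one.
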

\begin{proof}
Let $l$ be the number of edges incident to vertices in $T_{-}$ or $B_{-}$, and $i$ be the number of edges incident to vertices in $T_{-}$. We then choose $i$ vertices in $B_{+}$ and form $i$ edges. These choices contribute the factor of $ \binom{\alpha+j}{i}\binom{p}{i}i!.$ 
Next, there are $l-i$ edges incident to the vertices in $B_{-}$. Choosing $l-i$ vertices in $B_{-}$ and $l-i$ vertices in $T_{+}$, and pairing them, contributes $\binom{j}{l-i}\binom{p}{l-i}(l-i)!.$ 
The remaining $p-l$ edges must connect vertices in $T_{+}$ with vertices in $B_{+}$. Since there are $p-(l-i)$ isolated vertices in $T_{+}$ and $p-i$ isolated vertices in $B_{+}$, this contributes $ \binom{p-l+i}{p-l}\binom{p-i}{p-l}(p-l)!.$ 
Combining the above contributions and summing over all admissible $i$ and $l$ gives that 
\begin{align} \label{matching counting LUE v1}
\Big|M_{p,j}^{(\alpha)}(p)\Big|= p! \sum_{l=0}^{p}\sum_{i=0}^{l} \binom{p}{i} \binom{p-i}{l-i} \binom{\alpha+j}{i}\binom{j}{l-i}.
\end{align}
Notice here that the RHS of \eqref{matching counting LUE v1} can be written as 
\begin{align*}
p!  \sum_{i=0}^{p}\sum_{l=i}^{p} \binom{p}{i} \binom{p-i}{l-i} \binom{\alpha+j}{i}\binom{j}{l-i} = p!  \sum_{i=0}^{p} \binom{p}{i}  \binom{\alpha+j}{i}  \sum_{k=0}^{p-i} \binom{p-i}{k} \binom{j}{k}.   
\end{align*}
Then by the classical Vandermonde identity (see e.g. \cite[p. 190]{Stanley12}), we obtain \eqref{matching counting LUE}.  
\end{proof}

We are now ready to show Proposition~\ref{prop:spetral moment of LUE}. 

\begin{proof}[Proof of Proposition~\ref{prop:spetral moment of LUE}]
It now follows immediately from Lemmas~\ref{Lem_Laguerre history bijection} and ~\ref{Lem_mathcing counting LUE} together with \eqref{eqn:mathfrakml=wtl}. 
\end{proof}

\subsection{Proof of Theorem \ref{Thm:Spectralmoment} (A)} \label{Subsection_Them moment A}

In this subsection, we show Theorem \ref{Thm:Spectralmoment} (A). As in the previous subsection, our starting point is the orthogonality relation of the $q$-Laguerre polynomials on a discrete $q$-lattice: 
\begin{equation}\label{eqn:orthogonality of little q Laguerre}
    \int_{0}^{1}p_{m}(x;q^{\alpha}|q)p_{n}(x;q^{\alpha}|q)w^{(\mathrm{qL})}(x;q)\,d_{q}x
    =h_{n}^{(\mathrm{qL})}\delta_{nm}, \qquad h_n^{ \rm (qL) }:=(1-q)q^{(\alpha+1)n}\frac{(q;q)_{n}}{(q^{\alpha+1};q)_{n}}\frac{(q;q)_{\infty}}{(q^{\alpha+1};q)_{\infty}},
\end{equation}
where $d_q$ is the Jackson $q$-integral \eqref{def of Jackson integral}. 
The spectral moments \eqref{def of qLUE moments} can then be written as 
\begin{equation}
  m_{N,p}^{(\mathrm{qL})}=\sum_{j=0}^{N-1}\mathfrak{m}_{p,j}^{(\mathrm{qL})}, \qquad    \mathfrak{m}_{p,j}^{(\mathrm{qL})}:=\int_{0}^{1}\frac{x^{p}p_{j}(x;q^{\alpha}|q)^{2}}{h_{j}^{(\mathrm{qL})}}w^{(\mathrm{qL})}(x;q)\,d_{q}x. 
\end{equation} 
To prove Theorem~\ref{Thm:Spectralmoment} (i), we introduce a new statistic that extends the bipartite matching interpretation used in the continuum case $q=1$ in the previous subsection. For this purpose, and in view of \eqref{eqn: continuum limit to Laguerre polynomial}, it is convenient to work with the rescaled (monic) little $q$-Laguerre polynomial
\begin{equation}
    \wh{p}_{n}^{(\alpha)}(x;q)
    =\frac{(-1)^{n}q^{-\binom{n}{2}}}{[\alpha+n+1]_{q}\cdots [\alpha+1]_{q}}p_n((1-q)x;q^{\alpha} \vert q).
\end{equation}
Let $\widehat{\mathfrak{m}}_{p,j}^{(\mathrm{qL})}$ denote the associated moments with respect to the rescaled polynomials. Then by the change of variables for Jackson $q$-integrals (see e.g. \cite[Eq. (1.38)]{BFO24}), we have 
\begin{equation} \label{m hat m relation}
\mathfrak{m}_{p,j}^{(\mathrm{qL})}=(1-q)^{p} \wh{\mathfrak{m}}_{p,j}^{(\mathrm{qL})}.
\end{equation}
To apply the Flajolet--Viennot theory, we first observe that, by \eqref{eqn:three term of qlaguerre}, the rescaled little $q$-Laguerre polynomial $\widehat{p}_{n}^{(\alpha)}(x;q)$ satisfies the three-term recurrence relation
\begin{equation} \label{three term for p hat}
    \widehat{p}_{n+1}^{(\alpha)}(x;q)= (x-b_n)\widehat{p}_{n}^{(\alpha)}(x;q) -\lambda_n \widehat{p}_{n-1}^{(\alpha)}(x;q),  
\end{equation}
where 
\begin{equation} \label{bn lambdan for q Laguerre}
b_n=q^{n}[n+\alpha+1]_{q}+q^{n+\alpha}[n]_{q}, \qquad \lambda_n= q^{2n+\alpha-1}[n]_{q}[n+\alpha]_{q}. 
\end{equation}
 
As previously noted, a fundamental way to interpret a $q$-natural number is as a weighted sum:  
instead of writing $n = 1 + 1 + \cdots + 1$, one has $[n]_{q} = 1 + q + \cdots + q^{n-1}$.  
In the same spirit, unlike in the previous subsection—where the bijection to bipartite matchings allowed us to count all matchings uniformly—in the $q$-deformed setting we should introduce an additional statistic on bipartite matchings that captures the structure of the corresponding Motzkin paths. Identifying the correct statistic is typically the crucial step in such problems, and in the present case it turns out that the appropriate choice is to record how many edges cross one another.

To be more precise, we introduce a statistic, which we call the \textit{crossing}, associated with a bipartite matching $M$ introduced in the previous subsection. A crossing is one of the following configurations:
\begin{enumerate}
    \item[(C1)] a pair of edges $(a,\widetilde{b})$ and $(c,\widetilde{d})$ with $a<c$ and $d<b$,  
    \smallskip
    \item[(C2)] a pair consisting of an edge $(a,\widetilde{b})$ and an isolated vertex $c$ with $c<a$,  
    \smallskip
    \item[(C3)] a pair consisting of an edge $(a,\widetilde{b})$ and an isolated vertex $\widetilde{d}$ with $d<b$.
\end{enumerate}
The conditions \textup{(C2)} and \textup{(C3)} may appear less intuitive at first, but they also admit a direct pictorial interpretation.  
By introducing an imaginary \emph{infinity} vertex—placed to the right of $M$ and connected to all isolated vertices—we see that these conditions count precisely the effective crossings created by adjoining this additional vertex; see Figure~\ref{Fig_crossing def}.

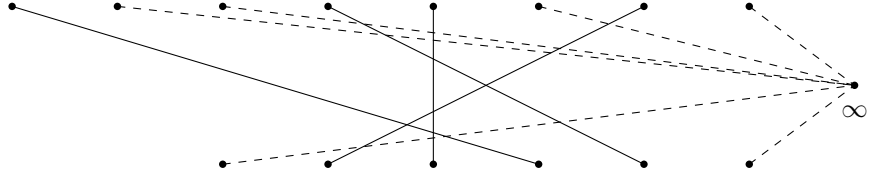
\begin{figure}[h]
    \centering
    \begin{tikzpicture}[scale=0.7]
        \foreach \i in {1,2,...,6}{\fill[black] (2*\i+2,0) circle (2pt);}
        \foreach \i in {1,2,...,6}{\fill[black] (2*\i+2,3) circle (2pt);}
        \fill[black] (0,3) circle (2pt);
        \fill[black] (2,3) circle (2pt);
        \fill[black] (16,1.5) circle (2pt);
        \draw (6,0) -- (12, 3);
        \draw (10,0) -- (0, 3);
        \draw (12,0) -- (6, 3);
        \draw (8,0) -- (8, 3);
        \draw[dashed] (2,3) -- (16,1.5);
        \draw[dashed] (4,3) -- (16,1.5);
        \draw[dashed] (10,3) -- (16,1.5);
        \draw[dashed] (14,3) -- (16,1.5);
        \draw[dashed] (4,0) -- (16,1.5);
        \draw[dashed] (14,0) -- (16,1.5);
        \node at (16,1) {$\infty$};
    \end{tikzpicture}
    \caption{A matching $M$ in $M_{2,4}^{(2)}$ with $\Cr(M)=17$}
    \label{Fig_crossing def}
\end{figure}

Recall that $M_{p,j}^{(\alpha)}(l)$ denotes the collection of matchings in $M_{p,j}^{(\alpha)}$ that contain exactly $l$ edges.  
A key ingredient in the proof of Theorem~\ref{Thm:Spectralmoment} (A) is the following bijection, which extends Lemma~\ref{Lem_Laguerre history bijection} by incorporating a weighted summation determined by the $q$-weights arising from the crossing statistic.

\begin{prop} \label{Prop_m har q cr stat}
Let $b_n$ and $\lambda_n$ be given by \eqref{bn lambdan for q Laguerre}. Then we have 
    \begin{equation}\label{eqn:mhat=qcr}
    \sum_{\omega\in\mathrm{Mot}_{p,j,j}}\mathrm{wt}_{b,\lambda}(\omega) = \sum_{M\in M_{p,j}^{(\alpha)}(p)}q^{\Cr(M)}.
    \end{equation}
Here, $\Cr(M)$ is the total number of crossings in the matching $M$.
\end{prop}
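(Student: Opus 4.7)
The plan is to refine the bijection of Lemma~\ref{Lem_Laguerre history bijection} into a statistic-preserving one, where the $q$-weight of each labeled Motzkin path equals $q^{\Cr(M)}$ for the matching $M$ it produces. For this, I would first expand the step weights label-by-label: with $[n]_q=1+q+\cdots+q^{n-1}$,
\begin{align*}
b_k &= q^{2k+\alpha} + \sum_{m=1}^{k+\alpha} q^{k+m-1} + \sum_{l=1}^{k} q^{k+\alpha+l-1}, \\
\lambda_k &= \sum_{m=1}^{k+\alpha}\sum_{l=1}^{k} q^{2k+\alpha+m+l-3},
\end{align*}
and at height $k$ assign: the vertical-edge label $m{=}0$ of \textup{(S6)} $\mapsto q^{2k+\alpha}$; the label $m\in\{1,\ldots,k+\alpha\}$ of \textup{(S3)} $\mapsto q^{k+m-1}$; the label $\widetilde l$ of \textup{(S7)} $\mapsto q^{k+\alpha+l-1}$; the label $[m,\widetilde l]$ of \textup{(S5)} $\mapsto q^{2k+\alpha+m+l-3}$; and the \textup{(S4)} NE step $\mapsto 1$. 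Each sum manifestly telescopes back to $b_k$ and $\lambda_k$, so this is a genuine decomposition.

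The heart of the proof is to verify, step by step, that each label's $q$-exponent equals the net number of crossings of the final $M$ introduced at that step. For this, I would maintain a running ``projected crossing count'' $\Cr_t$: after $t$ steps, count all \textup{(C1)}, \textup{(C2)}, \textup{(C3)} crossings of the current configuration under the convention that every currently-unused vertex is treated as isolated; when a later step converts an isolated vertex into an edge-endpoint, the \textup{(C2)}/\textup{(C3)} projections involving that vertex are removed. Take for instance an \textup{(S3)} step at height $k$ with label $m\ge 1$: the new edge $e=(i_m,\widetilde{B+1})$ contributes (i) $m-1$ \textup{(C2)} projections from currently-isolated top vertices to the left of $i_m$, (ii) $k$ \textup{(C3)} projections from the currently-isolated bottom vertices (all to the left of the new bottom), and (iii) $R_e$ new \textup{(C1)}-crossings with pre-existing edges whose top lies to the right of $i_m$. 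The key cancellation is that these $R_e$ crossings are exactly offset by the removal of previously-projected \textup{(C2)} entries involving $i_m$: each pre-existing edge $f$ with top to the right of $i_m$ was formed at a step when $i_m$ was already placed and still isolated, so $i_m$ was recorded in $f$'s \textup{(C2)} projection back then. This gives $\Cr_t-\Cr_{t-1}=k+m-1$, matching the assigned exponent; the remaining cases \textup{(S4)}--\textup{(S7)} are handled by analogous bookkeeping, with \textup{(S4)} trivially contributing $0$ and \textup{(S5)} contributing a net $2k+\alpha+m+l-3$ after accounting for the mutual \textup{(C1)}-crossing of its two new edges together with the two symmetric $R_{e_1}$ and $R_{e_2}$ cancellations.

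With this accounting established, the product of the label weights along a labeled Motzkin path telescopes to $q^{\Cr_T(M)}=q^{\Cr(M)}$, and summing over all labeled paths in $\mathrm{Mot}_{p,j,j}$ yields \eqref{eqn:mhat=qcr}. The main technical obstacle is precisely the cancellation above, which must be verified uniformly for each edge-creating step type; it rests on the elementary but crucial observation that a currently-isolated vertex has been isolated throughout its entire lifetime, so every earlier edge formed on the relevant side of it must have recorded a corresponding \textup{(C2)} or \textup{(C3)} projection against it---ensuring that the net $q$-exponent at each step depends only on the height $k$ and the chosen label, not on the specific matching being built.
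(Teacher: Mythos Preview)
Your proposal is correct and follows essentially the same approach as the paper's proof: both decompose $b_k$ and $\lambda_k$ label-by-label via the Laguerre-history bijection of Lemma~\ref{Lem_Laguerre history bijection}, and both verify that each label's $q$-exponent equals the incremental crossing contribution, with the identical cancellation mechanism (your explicit ``$R_e$ new \textup{(C1)}-crossings offset by removed \textup{(C2)}/\textup{(C3)} projections'' is exactly what the paper phrases as ``any potential crossing between the newly added edge and an already existing edge has, in fact, been accounted for at the moment the corresponding vertex ceased to be isolated''). Your ``projected crossing count'' $\Cr_t$ makes the telescoping a bit more explicit, but the content is the same.
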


Observe that when $q=1$, the RHS of \eqref{eqn:mhat=qcr} simply counts the total number of elements in $M_{p,j}^{(\alpha)}(p)$, and thus recovers the RHS of \eqref{eqn in Lem Laguerre history}. In addition, it is evident from the RHS of \eqref{eqn:mhat=qcr} that the expression is an increasing function of $q$.

\begin{proof}[Proof of Proposition~\ref{Prop_m har q cr stat}]
    By the bijection between Motzkin paths and bipartite matchings established in the proof of Lemma~\ref{Lem_Laguerre history bijection}, it suffices to show that $\mathrm{wt}_{b,\lambda}(\omega)=\sum^* q^{\Cr(M)}$ where the RHS runs over all matchings induced by every possible Laguerre history on the Motzkin path $\omega$. More precisely, the various admissible labellings arising in the Laguerre histories are all counted in the sum on the RHS.
    
   Recall that in a Laguerre history consisting of $p$ steps, each East step contributes one edge, each North–East step contributes no edge, and each South–East step contributes two edges.  Since the Motzkin path starts and ends at the same height, the total number of edges in the corresponding bipartite matching is exactly $p$. Moreover, in the construction of a bipartite matching from a Laguerre history, the height of a step encodes the number of isolated vertices. More precisely, when a step begins at height $k$, there are $\alpha + k$ isolated vertices on the top row and $k$ isolated vertices on the bottom row; see Figure~\ref{fig:comparison_models}.

To establish \eqref{eqn:mhat=qcr}, it remains to compute the total contribution of the crossings in the associated bipartite matching.  
We perform this counting in a dynamical manner.  
Recall that the weight of a Motzkin path is given by the product of the weights of its individual steps: an East step and a South–East step at height $k$ carry weights $b_{k}$ and $\lambda_{k}$, respectively, while a North–East step has weight $1$.  
Here $b_{k}$ and $\lambda_{k}$ are defined in \eqref{bn lambdan for q Laguerre}.  
Thus, in this dynamical viewpoint, it suffices to verify that the contribution of the crossing statistic is exactly $b_k$ for each East step at height $k$, equals $\lambda_{k}$ for each South–East step, and is $1$ for each North–East step.

We begin by analysing the contribution of an East step.
    \begin{itemize}
        \item (East step at height $k$ with label $0$) In this case, the newly added vertical edge creates
    \begin{itemize}
        \item $\alpha + k$ crossings with isolated vertices in the top row, and
        \item $k$ crossings with isolated vertices in the bottom row.
    \end{itemize}
    Note that in this case the new edge cannot create any crossings with previously existing edges.
        \smallskip 
        \item (East step at height $k$ with label $m$ and $1 \le m \le k+\alpha$) In this case, note that any potential crossing between the newly added edge and an already existing edge has, in fact, been accounted for at the moment the corresponding vertex ceased to be isolated.  
    More precisely, when a vertex was isolated at some earlier step but later became matched, the crossing between that vertex and the infinity vertex was already counted; this corresponds exactly to the crossing that would otherwise appear here.  
    Thus, only the crossings involving isolated vertices need to be considered.  
    Consequently, the newly added edge creates
    \begin{itemize}
        \item $m - 1$ crossings with isolated vertices in the top row, and
        \item $k$ crossings with isolated vertices in the bottom row.
    \end{itemize}
    \smallskip
        \item (East step at height $k$ with label $\widetilde{l}$ and $1 \le l \le k$). For the same reason as in the preceding case, the newly added edge creates
    \begin{itemize}
        \item $\alpha + k$ crossings with isolated vertices in the top row, and
        \item $l - 1$ crossings with isolated vertices in the bottom row.
    \end{itemize}
    \end{itemize}
   Combining the above cases, one sees that an East step of height $k$ contributes
    \begin{equation*}
        q^{\alpha+2k}+\sum_{m=1}^{\alpha+k}q^{m-1+k}+\sum_{l=1}^{k}q^{\alpha+k+l -1}
        =q^{\alpha+k}[k]_{q}+q^{k}[\alpha+k+1]_{q}=b_k
    \end{equation*}
    to our statistic.
    
Next, we consider the contribution of a South–East step.  If we advance with a South–East step of height $k$ labelled $(m,\widetilde{l})$, then the two newly added edges create  $\alpha + k + m - 2$ and $k + l - 2$ crossings with isolated vertices, respectively, and an additional single crossing with each other.  Hence a South–East step of height $k$ contributes
    \begin{equation*}
        \sum_{m=1}^{\alpha+k}\sum_{l=1}^{k}q^{\alpha+2k+m+l-3}=q^{\alpha+2k-1}[\alpha+k]_{q}[k]_{q}=\lambda_{k}.
    \end{equation*}
Since North–East steps and the addition of isolated vertices do not contribute to the statistic, this completes the proof.
\end{proof}

The remaining task is to evaluate the RHS of \eqref{eqn:mhat=qcr}.  
To this end, we proceed by induction, beginning with the base case $j = 0$.

\begin{lem} \label{Lem_q cr evaluation j=0}
For any nonnegative integer $t$ with $0 \le t \le p$, we have
    \begin{equation} \label{crossing counting for j=0}
        \sum_{M\in M_{p,0}^{(\alpha)}(t)}q^{\Cr(M)}=\qbinom{p+\alpha}{t} \qbinom{p}{t}   [t]_{q}!.
    \end{equation}
\end{lem}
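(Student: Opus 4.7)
The plan is to exploit the special structure at $j=0$, where $B_{-}=\emptyset$, so that every matching in $M_{p,0}^{(\alpha)}(t)$ consists of $t$ edges from the $p+\alpha$ top vertices to the $p$ positive bottom vertices and the defining constraint of $M_{p,j}^{(\alpha)}$ becomes vacuous. I would encode such a matching as a triple $(A,B,\sigma)$ consisting of a size-$t$ subset $A=\{a_1<\cdots<a_t\}$ of the (left-to-right relabelled) top positions $\{1,\dots,p+\alpha\}$, a size-$t$ subset $B=\{b_1<\cdots<b_t\}$ of the bottom positions $\{1,\dots,p\}$, and a permutation $\sigma\in S_t$ pairing the top vertex at $a_i$ with the bottom vertex at $\tilde{b}_{\sigma(i)}$.

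The key step is then to establish that, under this parameterisation, the crossing statistic decomposes additively as
\begin{equation*}
\Cr(M)=\operatorname{inv}(\sigma)+\sum_{i=1}^{t}(a_i-i)+\sum_{j=1}^{t}(b_j-j),
\end{equation*}
where the three summands correspond to crossings of type (C1), (C2) and (C3) respectively. The (C1) contribution is the classical identification of pairs of crossing edges with inversions of $\sigma$. For (C2), I would observe that among the $a_i-1$ top positions strictly to the left of $a_i$, exactly $i-1$ are matched, so $a_i-i$ remain isolated; summing over $i$ gives the second term. The (C3) contribution is computed analogously on the bottom row, and the reindexing $j=\sigma(i)$ removes the apparent dependence on $\sigma$, leaving $\sum_{j=1}^{t}(b_j-j)$.

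With the decomposition in place, the generating function factorises into three independent pieces, each of which is classical. I would invoke the Mahonian identity $\sum_{\sigma\in S_t}q^{\operatorname{inv}(\sigma)}=[t]_{q}!$ together with the standard $q$-binomial evaluation
\begin{equation*}
\sum_{1\leq a_1<\cdots<a_t\leq N}q^{\sum_i(a_i-i)}=\qbinom{N}{t},
\end{equation*}
applied once with $N=p+\alpha$ and once with $N=p$, and multiply the three factors together to recover \eqref{crossing counting for j=0}. The step I expect to be the main obstacle is the careful verification of the (C2) and (C3) counts: one must be certain not to double-count or to overlook crossings involving isolated vertices. Here the ``infinity vertex'' picture illustrated in Figure~\ref{Fig_crossing def} offers a useful geometric sanity check, because it turns the formal definitions of (C2) and (C3) into genuine geometric crossings with the edges emanating from the auxiliary vertex, and makes the counts $a_i-i$ and $b_j-j$ transparent.
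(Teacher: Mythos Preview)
Your proposal is correct and follows essentially the same route as the paper: encoding a matching as a triple (choice of matched top positions, choice of matched bottom positions, permutation), decomposing $\Cr(M)$ additively into the contributions from (C1), (C2), (C3), and then invoking the Mahonian identity together with the standard $q$-binomial subset evaluation. The only cosmetic difference is that the paper records the top and bottom selections as binary words and phrases the (C2)/(C3) contributions as their inversion numbers, whereas you work directly with the subsets $A,B$ and the equivalent statistic $\sum_i(a_i-i)$; these are the same computation.
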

\begin{proof}
This lemma can be derived by a direct argument in elementary enumerative combinatorics. To connect our computation with the standard framework in \cite{Stanley12}, it is convenient to work with the inversion statistic for permutations. Recall that for a permutation $w = w_1 w_2 \cdots w_m \in S_m$, an \emph{inversion} is a pair $(i,j)$ with $i<j$ and $w_i > w_j$; the total number of inversions is denoted by $\textup{inv}(w)$.

We regard $\Cr(M)$ as an inversion-type statistic by encoding a matching $M \in M_{p,0}^{(\alpha)}(t)$ as follows.  
First, record which vertices in the top row are incident to an edge by a
binary word  $ \varepsilon = (\varepsilon_1,\dots,\varepsilon_{p+\alpha}) \in \{0,1\}^{p+\alpha},$ where $\varepsilon_i = 1$ if the $i$-th top vertex is matched and $\varepsilon_i = 0$ otherwise. Similarly, record the matched vertices in the
bottom row by a binary word $\delta = (\delta_1,\dots,\delta_p) \in \{0,1\}^{p},$ where $\delta_j = 1$ if the $j$-th bottom vertex is matched. Each of $\varepsilon$ and $\delta$ is a multiset permutation of $\{0^{m-t}, 1^t\}$. 

Next, order the matched vertices in each row from left to right, and let
$\sigma \in S_t$ be the permutation describing how the $k$-th matched top
vertex is connected to the $\sigma(k)$-th matched bottom vertex. This gives a
bijection between $M$ and $(\varepsilon,\delta,\sigma)$. 

With this encoding, our crossing statistic decomposes as
\begin{equation} \label{crossing inv decomp}
\Cr(M) = \textup{inv}(\delta) + \textup{inv}(\varepsilon) +  \textup{inv}(\sigma),
\end{equation} 
where $\textup{inv}(\cdot)$ denotes the inversion number of a word or permutation. Note that each term on the RHS of \eqref{crossing inv decomp} corresponds precisely to the contributions from (C1)--(C3) in the definition of the crossing statistic.
On the other hand, by \cite[Propositions~1.7.1 and ~1.3.13]{Stanley12}, the inversion generating functions are given by 
$$
\sum_{\varepsilon} q^{\textup{inv}(\varepsilon)} = \qbinom{p+\alpha}{t},
\qquad
\sum_{\delta} q^{\textup{inv}(\delta)} = \qbinom{p}{t}, \qquad \sum_{\sigma\in S_t} q^{\textup{inv}(\sigma)} = [t]_{q}!.
$$
Multiplying these three factors, we obtain \eqref{crossing counting for j=0}. 
\end{proof}

Next, we proceed to the case $j\geq1$. As a $q$-counterpart of Lemma~\ref{Lem_mathcing counting LUE}, we show the following. 

\begin{lem} \label{Lem_q cr evaluation}
For any nonnegative integers $\alpha,p$ and $j,$ we have 
\begin{equation} \label{eqn in Lema q cr eval}
\sum_{M\in M_{p,j}^{(\alpha)}(p)}q^{\Cr(M)}= [p]_q!  \sum_{i=0}^{p} q^{ (p-i)(\alpha-i)+pj }  \qbinom{p}{i} \qbinom{\alpha+j}{i} \qbinom{p-i+j}{j}.  
\end{equation}
\end{lem}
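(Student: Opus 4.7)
The plan is to extend the bijective encoding of Lemma~\ref{Lem_q cr evaluation j=0} so as to accommodate the vertices of $B_-$ together with the constraint that no edge connects $T_-$ with $B_-$. Each matching $M \in M_{p,j}^{(\alpha)}(p)$ is encoded by binary words $\varepsilon_T \in \{0,1\}^{\alpha+j+p}$ and $\varepsilon_B \in \{0,1\}^{j+p}$ marking the matched positions in each row (both containing exactly $p$ ones), together with a pairing permutation $\sigma \in S_p$. Under this encoding,
\[
\Cr(M) = \textup{(C2 count of }\varepsilon_T\textup{)} + \textup{(C3 count of }\varepsilon_B\textup{)} + \textup{inv}(\sigma),
\]
where the ``C2 count'' of a binary word $w$ is the number of pairs $(s,t)$ with $s<t$, $w_s=0$, $w_t=1$. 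The forbidden-edge rule translates into the following: if $i$ (resp.\ $k$) denotes the number of $1$'s in the $T_-$-block of $\varepsilon_T$ (resp.\ $B_-$-block of $\varepsilon_B$), then $\sigma$ must send $\{1,\ldots,i\}$ into $\{k+1,\ldots,p\}$.

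The next step is to partition $M_{p,j}^{(\alpha)}(p)$ by $(i,k)$ and factor the generating function of each piece. Splitting $\varepsilon_T = \varepsilon_T^-\varepsilon_T^+$ along the $T_-/T_+$ boundary, the within-block contributions generate $\qbinom{\alpha+j}{i}\qbinom{p}{i}$, while the cross-block contribution --- counting $0$'s of $\varepsilon_T^-$ preceding $1$'s of $\varepsilon_T^+$ --- equals $(\alpha+j-i)(p-i)$, producing a factor $q^{(\alpha+j-i)(p-i)}$. The analogous splitting of $\varepsilon_B$ yields $q^{(j-k)(p-k)}\qbinom{j}{k}\qbinom{p}{k}$. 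For the constrained permutation I plan to write $\sigma = (\sigma_1,\sigma_2)$, where $\sigma_1\colon[i]\to S$ is an injection onto some $S \subseteq \{k+1,\ldots,p\}$ of size $i$ and $\sigma_2$ is the induced bijection on the complement. The inversion count of $\sigma$ then splits into (a) inversions internal to $\sigma_1$ and $\sigma_2$ (jointly summing to $[i]_q![p-i]_q!$ over all admissible pairs once $S$ is fixed), (b) an automatic block of $ik$ inversions coming from $\sigma_1$-values in $\{k+1,\ldots,p\}$ compared against $\sigma_2$-values in $\{1,\ldots,k\}$, and (c) an $S$-dependent remainder whose sum over $S$ contributes $\qbinom{p-k}{i}$.

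Multiplying these three contributions and using the multinomial identity $\qbinom{p}{k}\qbinom{p-k}{i} = \qbinom{p}{i}\qbinom{p-i}{k}$ together with $\qbinom{p}{i}[i]_q![p-i]_q! = [p]_q!$, the inner sum over $k$ reduces to
\[
\sum_k q^{(j-k)(p-k)+ik}\qbinom{j}{k}\qbinom{p-i}{k}.
\]
Rewriting the exponent as $ij + (j-k)(p-i-k)$ puts this in the form of the standard $q$-Chu--Vandermonde identity, which evaluates the sum to $q^{ij}\qbinom{p-i+j}{j}$. Combined with the $q$-exponent $(\alpha+j-i)(p-i)$ coming from $\varepsilon_T$, the total power of $q$ simplifies to $(p-i)(\alpha-i)+pj$, and summation over $i$ yields the claimed formula. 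The main technical difficulty lies in the constrained pairing step: one must carefully disentangle $\textup{inv}(\sigma)$ under the coupling between $(i,k)$ and the permutation structure, identifying precisely the portion that contributes the automatic $q^{ik}$ factor and the portion that remains as the $S$-dependent $\qbinom{p-k}{i}$ after summation.
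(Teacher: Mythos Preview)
Your approach is correct and is a genuinely different route from the paper's. Both proofs partition $M_{p,j}^{(\alpha)}(p)$ by essentially the same pair of parameters---your $(i,k)$ corresponds to the paper's $(i,l-i)$---and both close with the $q$-Chu--Vandermonde identity. The difference lies in how the crossing generating function is computed on each block. The paper argues \emph{constructively}: it starts from a core matching in $M_{t-l,0}^{(0)}(t-l)$ (using Lemma~\ref{Lem_q cr evaluation j=0} for the base case) and then inserts the remaining edges and isolated vertices in four ordered steps, tracking the crossing increment at each insertion. This yields the more general formula~\eqref{eqn:lem2.3} for arbitrary $t\le p$, which is then specialised to $t=p$.

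Your argument is more \emph{structural}: you extend the word-and-permutation encoding of Lemma~\ref{Lem_q cr evaluation j=0} directly to the full vertex set, so that $\Cr(M)$ factorises cleanly into a coinversion count on each of $\varepsilon_T,\varepsilon_B$ and an inversion count on $\sigma$, with the no-edge constraint absorbed entirely into the block condition $\sigma(\{1,\dots,i\})\subseteq\{k+1,\dots,p\}$. The key observation that the cross-inversions between the two blocks of $\sigma$ depend only on the image set $S$ (not on $\sigma_1,\sigma_2$ individually) is what makes the constrained permutation sum factor as $q^{ik}\qbinom{p-k}{i}[i]_q![p-i]_q!$. This route is arguably more transparent and avoids the case-by-case bookkeeping of the four-step construction, at the cost of not producing the auxiliary formula for general $t$---which, however, is not needed elsewhere in the paper.
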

 
\begin{proof}
We claim indeed a slightly general one: for a nonneatgive integer $t$ with $0 \le t \le p$, we have 
\begin{align}
\begin{split}
\label{eqn:lem2.3}
\sum_{M\in M_{p,j}^{(\alpha)}(t)}q^{\Cr(M)} 
       &=\sum_{l=0}^{t}\sum_{i=0}^{l}  q ^{i(l-i)+(t-i)(j-i+\alpha)+(j-l+i)(t-l+i)}
       \\
       &\quad \times \frac{[t-l+i]_{q}![t-i]_{q}!}{[t-l]_{q}!}\qbinom{p}{t-i}\qbinom{p}{t-l+i}\qbinom{\alpha+j}{i}\qbinom{j}{l-i}. 
\end{split}
\end{align} 
By specialising \eqref{eqn:lem2.3} to $t=p$ and performing a change of indices, we have 
\begin{align*}
\sum_{M\in M_{p,j}^{(\alpha)}(p)}q^{\Cr(M)} &= [p]_q! \sum_{l=0}^{p}\sum_{i=0}^{l}q ^{i(l-i)+(p-i)(j-i+\alpha)+(j-l+i)(p-l+i)}    \qbinom{p}{i} \qbinom{p-i}{l-i}   \qbinom{\alpha+j}{i}\qbinom{j}{l-i}
\\
&= [p]_q! \sum_{i=0}^{p}\sum_{l=i}^{p} q ^{i(l-i)+(p-i)(j-i+\alpha)+(j-l+i)(p-l+i)} \qbinom{p}{i} \qbinom{p-i}{l-i} \qbinom{\alpha+j}{i}\qbinom{j}{l-i}
\\
&= [p]_q!  \sum_{i=0}^{p} q^{ (p-i)(\alpha-i)+pj }  \qbinom{p}{i} \qbinom{\alpha+j}{i}  \sum_{k=0}^{p-i} q ^{ (p-i-k)(j-k) }   \qbinom{p-i}{k}  \qbinom{j}{j-k}. 
\end{align*}
Then by applying the $q$-Vandermonde identity (see e.g. \cite[p. 190]{Stanley12}), we obtain \eqref{eqn in Lema q cr eval}. 

To compute the LHS of \eqref{eqn:lem2.3}, we further decompose the set of matchings in $M_{p,j}^{(\alpha)}(t)$ as follows. 
\begin{itemize}
 \item Let $l$, with $0 \le l \le \min\{t,j\}$, denote the number of edges incident to vertices in $T_-$ or $B_-$. Equivalently, there are $t-l$ edges connecting $T_+$ and $B_+$.
We denote by $M_{p,j}^{(\alpha)}(t;l)$ the subset of $M_{p,j}^{(\alpha)}(t)$ consisting of matchings with exactly $l$ such edges.
\smallskip 
    \item
    Let $i$, with $0 \le i \le l$, denote the number of edges incident to $T_-$.
    Then $l-i$ edges are incident to $B_-$. 
\end{itemize}
By definition, we have 
\begin{equation}  \label{eqn:lem2.3 decomp}
    \sum_{M\in M_{p,j}^{(\alpha)}(t)}q^{\Cr(M)}
    =\sum_{l=0}^{t}\sum_{i=0}^l \sum_* q^{\Cr(M)}, 
\end{equation} 
where the inner sum $\sum_*$ runs over all matchings $M \in M_{p,j}^{(\alpha)}(t;l)$ such that exactly $i$ edges are incident to $T_-$.
Therefore, to prove \eqref{eqn:lem2.3}, it suffices to show that
\begin{equation}\label{eqn:mpj0tl}
   \sum_* q^{\Cr(M)}   = q ^{i(l-i)+(t-i)(j-i+\alpha)+(j-l+i)(t-l+i)}\frac{[t-l+i]_{q}![t-i]_{q}!}{[t-l]_{q}!}\qbinom{p}{t-i}\qbinom{p}{t-l+i}\qbinom{\alpha+j}{i}\qbinom{j}{l-i}.
\end{equation} 
 
Given a matching in $M_{p,j}^{(\alpha)}(t;l)$, we consider the submatching
consisting of those edges that connect $T_+$ and $B_+$, together with their incident vertices.
This submatching is an element of $M_{t-l,0}^{(0)}(t-l)$ for some $0 \le l \le t$.
Conversely, starting from a matching in $M_{t-l,0}^{(0)}(t-l)$, we construct a matching in $M_{p,j}^{(\alpha)}(t;l)$ as follows.
In the top graphical representation of Figure~\ref{fig:construction_steps} below, the filled vertices and edges represent a matching in $M_{t-l,0}^{(0)}(t-l)$, where $t-l=2$.
By contrast, all vertices---both filled and empty---in the same figure correspond to those in $M_{p,j}^{(\alpha)}(t;l)$, with parameters $\alpha=1$, $j=3$, and $p=5$.
To obtain a matching in $M_{p,j}^{(\alpha)}(t;l)$ from the given one in
$M_{t-l,0}^{(0)}(t-l)$, we therefore need to add
\begin{itemize}
     \item[(i)] $t-(t-l)=l$ additional edges that are connected to $T_-$ or $B_-$, 
     \smallskip 
    \item[(ii)] $\alpha+p+j-(t-l)$ vertices in the top row and $p+j-(t-l)$ vertices in the bottom row.
\end{itemize}
We perform this construction through the steps described below. After completing all steps, the resulting collection of vertices and edges forms a matching, illustrated in the bottom graphical representation of Figure~\ref{fig:construction_steps}. 
\begin{itemize}
    \item \textbf{(Step 1)} Recall that $i$ denotes the number of edges incident to vertices in $T_-$ at the end of the process.
    In the first step, we add $i$ such edges together with their incident
    vertices.
    \smallskip 
    \item \textbf{(Step 2)} Next, we add the remaining $l-i$ edges incident to vertices in $B_-$.
    After this step, all $t$ edges have been added.
    Then it remains to add $\alpha+p+j-t$ isolated vertices in the top row and $p+j-t$ isolated vertices in the bottom row.
    \smallskip 
    \item \textbf{(Step 3)} We next add isolated vertices in $T_+$ and $B_+$.
    Specifically, we add $p-(t-i)$ vertices in $T_+$ and
    $p-(t-l+i)$ isolated vertices in $B_+$.
    \smallskip 
    \item \textbf{(Step 4)}  Finally, we add the remaining $\alpha+j-i$ isolated vertices in $T_-$ and $j-(l-i)$ isolated vertices in $B_-$.
\end{itemize}

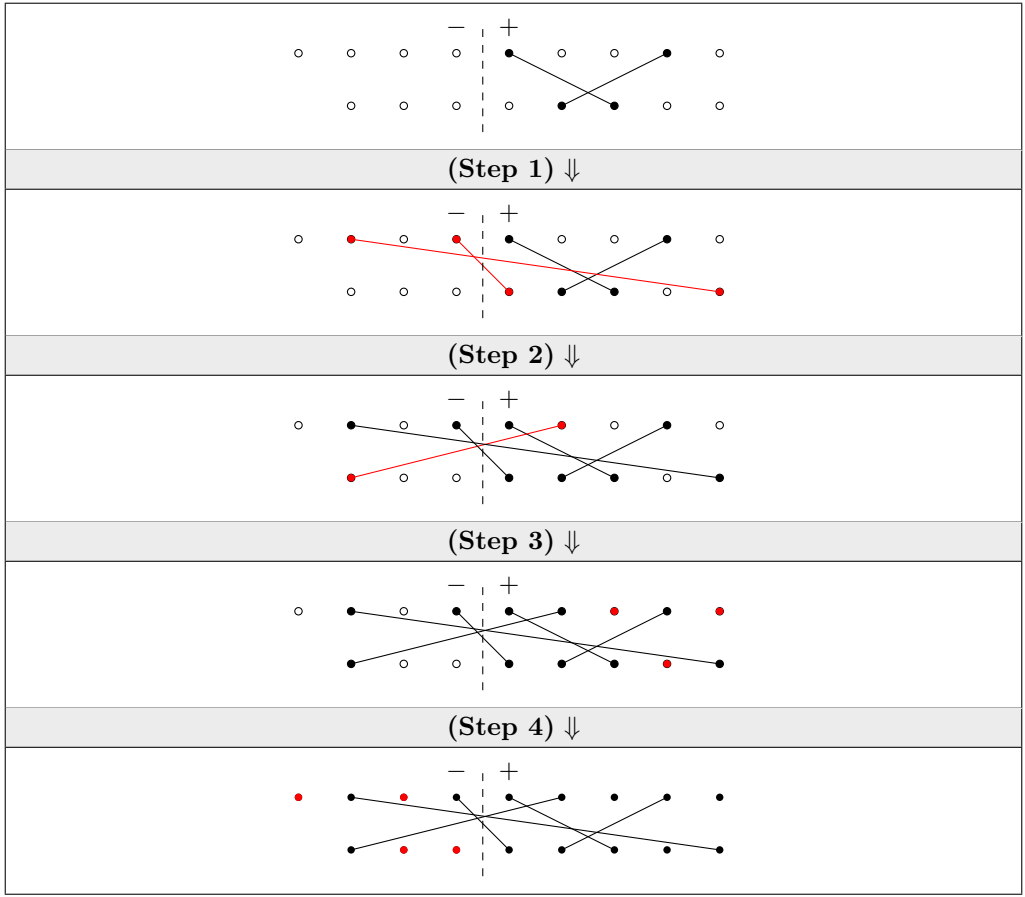
\begin{figure}[h]
\centering
\renewcommand{\arraystretch}{1.2}
\setlength{\tabcolsep}{0pt}

\begin{tabular}{|C{13.5cm}|}
\hline
\smallskip  
      \begin{tikzpicture}[scale=0.7]
            \foreach \i in {-1,0,1,2,3,4,5,6}{\draw[black] (\i,0) circle (2pt);}
            \foreach \i in {-2,-1,0,1,2,3,4,5,6}{\draw[black] (\i,1) circle (2pt);}
            \fill[black] (3,0) circle (2pt);
            \fill[black] (4,0) circle (2pt);
            \fill[black] (2,1) circle (2pt);
            \fill[black] (5,1) circle (2pt);
            \draw[black] (5,1) -- (3,0);
            \draw[black] (2,1) -- (4,0);
               \draw[dashed] (1.5,-0.5) -- (1.5,1.5);
            \node at (1,1.5) {$-$};
            \node at (2,1.5) {$+$};
        \end{tikzpicture}
\smallskip 
\\ \hline

{\cellcolor{gray!15}\centering
\textbf{(Step 1)}\ $\Downarrow$}
\\ \hline

\smallskip 
  \begin{tikzpicture}[scale=0.7]
            \foreach \i in {-1,0,1,2,3,4,5,6}{\draw[black] (\i,0) circle (2pt);}
            \foreach \i in {-2,-1,0,1,2,3,4,5,6}{\draw[black] (\i,1) circle (2pt);}
            \fill[black] (3,0) circle (2pt);
            \fill[black] (4,0) circle (2pt);
            \fill[black] (2,1) circle (2pt);
            \fill[black] (5,1) circle (2pt);
            
            \fill[red] (2,0) circle (2pt); 
             \fill[red] (1,1) circle (2pt); 
            \fill[red] (6,0) circle (2pt);
            \fill[red] (-1,1) circle (2pt);
            
            \draw[black] (5,1) -- (3,0);
            \draw[black] (2,1) -- (4,0);
            
            \draw[red] (2,0) -- (1,1);
            \draw[red] (6,0) -- (-1,1);
               \draw[dashed] (1.5,-0.5) -- (1.5,1.5);
            \node at (1,1.5) {$-$};
            \node at (2,1.5) {$+$};
        \end{tikzpicture}
        \smallskip 
\\ \hline

{\cellcolor{gray!15}\centering
\textbf{(Step 2)}\ $\Downarrow$}
\\ \hline
\smallskip 
   \begin{tikzpicture}[scale=0.7]
            \foreach \i in {-1,0,1,2,3,4,5,6}{\draw[black] (\i,0) circle (2pt);}
            \foreach \i in {-2,-1,0,1,2,3,4,5,6}{\draw[black] (\i,1) circle (2pt);}
            \fill[black] (3,0) circle (2pt);
            \fill[black] (4,0) circle (2pt);
            \fill[black] (2,1) circle (2pt);
            \fill[black] (5,1) circle (2pt);
            
            \fill[red] (-1,0) circle (2pt);
            \fill[red] (3,1) circle (2pt);
            
            \fill[black] (2,0) circle (2pt);
            \fill[black] (1,1) circle (2pt);
            \fill[black] (6,0) circle (2pt);
            \fill[black] (-1,1) circle (2pt);
            
            \draw[black] (5,1) -- (3,0);
            \draw[black] (2,1) -- (4,0);
            
            \draw[red] (-1,0) -- (3,1);
            
            \draw[black] (2,0) -- (1,1);
            \draw[black] (6,0) -- (-1,1);
            
            \draw[dashed] (1.5,-0.5) -- (1.5,1.5);
            \node at (1,1.5) {$-$};
            \node at (2,1.5) {$+$}; 
        \end{tikzpicture}
        \smallskip 
\\ \hline

{\cellcolor{gray!15}\centering
\textbf{(Step 3)}\ $\Downarrow$}
\\ \hline
\smallskip 
      \begin{tikzpicture}[scale=0.7]
            \foreach \i in {-1,0,1,2,3,4,5,6}{\draw[black] (\i,0) circle (2pt);}
            \foreach \i in {-2,-1,0,1,2,3,4,5,6}{\draw[black] (\i,1) circle (2pt);}
            \fill[black] (3,0) circle (2pt);
            \fill[black] (4,0) circle (2pt);
            \fill[black] (2,1) circle (2pt);
            \fill[black] (5,1) circle (2pt);
            \fill[black] (-1,0) circle (2pt);
            \fill[black] (3,1) circle (2pt);
            \fill[black] (2,0) circle (2pt);
            \fill[black] (1,1) circle (2pt);
            \fill[black] (6,0) circle (2pt);
            \fill[black] (-1,1) circle (2pt);
            
            \fill[red] (5,0) circle (2pt);
            \fill[red] (4,1) circle (2pt);
            \fill[red] (6,1) circle (2pt);
            
            \draw[black] (5,1) -- (3,0);
            \draw[black] (2,1) -- (4,0);
            \draw[black] (-1,0) -- (3,1);
            \draw[black] (2,0) -- (1,1);
            \draw[black] (6,0) -- (-1,1);

            \draw[dashed] (1.5,-0.5) -- (1.5,1.5);
            \node at (1,1.5) {$-$};
            \node at (2,1.5) {$+$};  
    \end{tikzpicture}
        \smallskip 
\\ \hline
 
{\cellcolor{gray!15}\centering
\textbf{(Step 4)}\ $\Downarrow$}
\\ \hline
\smallskip 
     \begin{tikzpicture}[scale=0.7]
       
            \foreach \i in {-1,0,1,2,3,4,5,6}{\fill[black] (\i+10,0) circle (2pt);}
            \foreach \i in {-2,-1,0,1,2,3,4,5,6}{\fill[black] (\i+10,1) circle (2pt);}
            \fill[red] (8,1) circle (2pt);
            \fill[red] (10,1) circle (2pt);
            \fill[red] (10,0) circle (2pt);
            \fill[red] (11,0) circle (2pt);
            
            \draw[black] (15,1) -- (13,0);
            \draw[black] (12,1) -- (14,0);
            \draw[black] (9,0) -- (13,1);
            \draw[black] (16,0) -- (9,1);
            \draw[black] (12,0) --(11,1);

            \draw[dashed] (11.5,-0.5) -- (11.5,1.5);
            \node at (11,1.5) {$-$};
            \node at (12,1.5) {$+$};
    \end{tikzpicture}
        \smallskip 
\\ \hline

\end{tabular}

\caption{A step-by-step illustration of the construction.}
\label{fig:construction_steps}
\end{figure}

At each step, we keep track of the combinatorial factors contributing to the resulting summation defining our statistic. 
Note that by Lemma~\ref{Lem_q cr evaluation j=0}, we have 
   \begin{equation} \label{factor in Step0}
        \sum_{M\in M_{t-l,0}^{(0)}(t-l)}q^{\Cr(M)}=  [t-l]_{q}!.
    \end{equation} 
    
\smallskip 

\noindent \textbf{(Step 1)} We add one vertex at the leftmost position of the top row and one vertex in the bottom row, and then introduce an edge connecting these two vertices.
Since there are $t-l$ existing vertices in the bottom row, there are $t-l+1$ possible positions for the newly added bottom vertex. 
If the added vertex in $B_+$ is the $n$-th vertex from the left in the bottom row, then the added edge contributes $n-1$ to the crossing number.
Consequently, summing over all admissible choices, the contribution of the first added edge to our statistic is $[t-l+1]_q$.

To add the second edge, we again insert a new leftmost vertex in the top row and a new vertex in the bottom row.
Now there are $t-l+2$ possible positions for the newly added bottom vertex, and by the same reasoning, the admissible choices of the second edge contribute $[t-l+2]_q$.
Repeating this process $i$ times, this step produces the factor
        \begin{equation} \label{factor in Step1} 
         \frac{[t-l+i]_{q}!}{[t-l]_{q}!}.
        \end{equation}
\noindent \textbf{(Step 2)} Using the same argument as in the previous step, we proceed as follows.
Since $i$ edges were added, each newly added edge in the present step creates $i$ additional crossings, contributing a factor of $q^{i}$.
As this operation is performed $l-i$ times, the total contribution from crossings is $q^{i(l-i)}$.

Moreover, as in \textbf{(Step 1)}, the admissible positions of the added vertices in the bottom row contribute the factor
    \begin{equation} \label{factor in Step2}
        q^{i(l-i)}\frac{[t-i]_{q}!}{[t-l]_{q}!}.
    \end{equation} 

\noindent \textbf{(Step 3)}  
Since there are $t-i$ existing vertices in $T_+$, we add $p-(t-i)$ isolated vertices inductively.
By the definition of the crossing number, the contribution of a newly added vertex to the crossing count equals the number of vertices incident to edges that are positioned to the right of the newly added one.
Hence, by the standard combinatorial interpretation of $q$-binomial coefficients, this procedure contributes the factor
        \begin{equation*}
            \qbinom{(t-i)+(p-t+i)}{t-i}=\qbinom{p}{t-i}.
        \end{equation*}
Similarly, we add $p-(t-l+i)$ isolated vertices in $B_+$. By the same reasoning, this step contributes the factor
        \begin{equation*}
            \qbinom{p}{t-l+i}.
        \end{equation*} 
Therefore, the total contribution of this step is
\begin{equation}  \label{factor in Step3}
\qbinom{p}{t-i}  \qbinom{p}{t-l+i}.
\end{equation} 

\noindent \textbf{(Step 4)}  
Since there are $i$ existing vertices in $T_-$, we add $\alpha+j-i$ isolated vertices in $T_-$. Each of these newly added vertices creates crossings with the $t-i$ edges incident to vertices in $T_+$.
Moreover, there are already $i$ connected vertices in $T_-$.
By the same reasoning as in \textbf{(Step 3)}, this yields the factor
        \begin{equation*}
            q^{(t-i)(\alpha+j-i)}\qbinom{i+(\alpha+j-i)}{i}=q^{(t-i)(\alpha+j-i)}\qbinom{\alpha+j}{i}.
        \end{equation*}
Similarly, since there are $l-i$ existing vertices in $B_-$, we add $j-(l-i)$ isolated vertices in $B_-$. These vertices create crossings with the $t-l+i$ edges incident to vertices in $B_+$.
Therefore, this contributes the factor
        \begin{equation*}
            q^{(t-l+i)(j-l+i)}\qbinom{(l-i)+(j-l+i)}{j-l+i}=q^{(t-l+i)(j-l+i)}\qbinom{j}{l-i}.
        \end{equation*}   
Combining these, the total contribution of this step is
\begin{equation}  \label{factor in Step4}
q^{(t-i)(\alpha+j-i)+(t-l+i)(j-l+i)}\qbinom{\alpha+j}{i} \qbinom{j}{l-i}. 
\end{equation}
Now by combining all of \eqref{factor in Step0}, \eqref{factor in Step1}, \eqref{factor in Step2}, \eqref{factor in Step3} and \eqref{factor in Step4}, we obtain the desired result \eqref{eqn:mpj0tl}. 
\end{proof}

\begin{proof}[Proof of Theorem \ref{Thm:Spectralmoment} (A)]
The desired evaluation \eqref{eqn:m} follows by combining \eqref{mathfrak m in terms of Motzkin}, \eqref{m hat m relation}, Proposition~\ref{Prop_m har q cr stat} and Lemma~\ref{Lem_q cr evaluation}.  
\end{proof}
 
\subsection{Proof of Theorem~\ref{Thm:Spectralmoment} (B)} \label{Subsection_Them moment B}

In this subsection, we show Theorem~\ref{Thm:Spectralmoment} (B). Throughout this subsection we assume $p \ge 1$, since the case $p=0$ is trivial: by definition $m_{N,0}^{\rm (qL)} = N$.

Recall that $\alpha=cN+d$. Then we can rewrite \eqref{eqn:m} as 
\begin{equation}\label{eqn:rewrite mNp}
  {m}_{N,p}^{(\mathrm{qL})}\Big|_{\alpha=cN+d}
    =(1-q)^{p}[p]_{q}!\sum_{i=0}^{p}q^{-(p-i)i}\qbinom{p}{i}\sum_{j=0}^{N-1}f_{i}^{(c,d)}(j),
\end{equation}
where 
\begin{equation}
    f_{i}^{(c,d)}(j)=q^{(cN+d)(p-i)+pj}\qbinom{cN+d+j}{i}\qbinom{p-i+j}{p-i}. 
\end{equation}
We now analyse the large-$N$ asymptotics of each factor in the RHS of \eqref{eqn:rewrite mNp}.

\begin{lem}\label{lem:large N of qfactorials}
    Let $q$ be scaled as in \eqref{eqn:q scale}. As $N\rightarrow\infty$, we have
    \begin{equation}\label{eqn:lemma 3.1}
        q^{-(p-i)i}(1-q)^{p}[p]_{q}!\qbinom{p}{i}
        =\Big(\frac{\lambda}{N}\Big)^{p}p!\binom{p}{i}\Big(1+\frac{-p^{2}-p-2i^{2}+2ip}{4}\frac{\lambda}{N}+O(N^{-2})\Big).
    \end{equation}
\end{lem}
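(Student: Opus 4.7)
The plan is to expand each of the three factors $q^{-(p-i)i}$, $(1-q)^p[p]_q!$, and $\qbinom{p}{i}$ separately as $N\to\infty$ under the scaling $q=e^{-\lambda/N}$, and then multiply these expansions, collecting the coefficient of $\lambda/N$. All quantities depend polynomially on fixed integers $p,i$, so uniformity in the error terms is automatic and no deep obstacle is expected. The key algebraic observation that keeps the bookkeeping clean is the identity
\begin{equation*}
(1-q)^p [p]_q! = \prod_{k=1}^p (1-q^k),
\end{equation*}
which absorbs the factorial $[p]_q!$ into a tidy product.

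First, I would expand the exponential prefactor as
\begin{equation*}
q^{-(p-i)i}=e^{(p-i)i\lambda/N}=1+\frac{(p-i)i\,\lambda}{N}+O(N^{-2}).
\end{equation*}
Next, starting from $1-e^{-k\lambda/N}=\frac{k\lambda}{N}\bigl(1-\frac{k\lambda}{2N}+O(N^{-2})\bigr)$, I would deduce
\begin{equation*}
(1-q)^p [p]_q!=\prod_{k=1}^p(1-q^k)=\Big(\frac{\lambda}{N}\Big)^p p!\,\prod_{k=1}^p\Big(1-\frac{k\lambda}{2N}+O(N^{-2})\Big)=\Big(\frac{\lambda}{N}\Big)^p p!\,\Big(1-\frac{p(p+1)\lambda}{4N}+O(N^{-2})\Big),
\end{equation*}
using $\sum_{k=1}^p k=p(p+1)/2$. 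For the $q$-binomial coefficient I would use the product representation
\begin{equation*}
\qbinom{p}{i}=\prod_{k=1}^{p-i}\frac{1-q^{i+k}}{1-q^k}
=\prod_{k=1}^{p-i}\frac{i+k}{k}\Big(1-\frac{i\lambda}{2N}+O(N^{-2})\Big)=\binom{p}{i}\Big(1-\frac{i(p-i)\lambda}{2N}+O(N^{-2})\Big).
\end{equation*}

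Multiplying the three expansions together, the $O(1)$ term is $(\lambda/N)^p p!\binom{p}{i}$ and the coefficient of $\lambda/N$ inside the parenthesis is
\begin{equation*}
(p-i)i-\frac{p(p+1)}{4}-\frac{i(p-i)}{2}=\frac{2i(p-i)-p(p+1)}{4}=\frac{-p^2-p-2i^2+2ip}{4},
\end{equation*}
exactly matching \eqref{eqn:lemma 3.1}. The only care needed is verifying that all $O(N^{-2})$ remainders in the finite products remain $O(N^{-2})$ after multiplication, which follows immediately since $p$ and $i$ are fixed integers. This completes the proposed proof.
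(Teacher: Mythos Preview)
Your proof is correct and follows essentially the same approach as the paper: expanding each of the three factors separately (using the identity $(1-q)^p[p]_q!=\prod_{k=1}^p(1-q^k)$ and a product form of the $q$-binomial) and then combining the $\lambda/N$ coefficients. The only cosmetic difference is that the paper writes $\qbinom{p}{i}=\prod_{m=1}^{i}\frac{1-q^{p-i+m}}{1-q^m}$ rather than your $\prod_{k=1}^{p-i}\frac{1-q^{i+k}}{1-q^k}$, but both lead to the same $-\tfrac{i(p-i)}{2}\tfrac{\lambda}{N}$ correction.
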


\begin{proof}
Note that as $N \to \infty$, we have 
    \begin{equation*}
        q^{-(p-i)i}=1+i(p-i)  \frac{ \lambda}{N}+O(N^{-2}).
    \end{equation*}
Also by the definition of $q$-factorial, we have 
\begin{align*}
 (1-q)^{p}[p]_{q}!=\prod_{m=1}^{p}(1-q^{m})=\Big(\frac{\lambda}{N}\Big)^{p}p!\Big(1-\frac{p(p+1)}{4}\frac{\lambda}{N}+O(N^{-2})\Big). 
\end{align*} 
Similarly, we have 
\begin{align*}
\qbinom{p}{i}=\prod_{m=1}^{i}\frac{1-q^{p-i+m}}{1-q^{m}} =\binom{p}{i}\Big(1-\frac{i(p-i)}{2}\frac{\lambda}{N}+O(N^{-2})\Big).
\end{align*}
Combining the above, the desired asymptotic formula \eqref{eqn:lemma 3.1} follows. 
\end{proof}

We now examine the asymptotic behaviour of the term $\sum_{j=0}^{N-1}f_{l,i}^{(c,d)}(j)$.

\begin{lem}
    Let $q$ be scaled as in \eqref{eqn:q scale}. Then, as $N\rightarrow\infty$, we have
    \begin{equation}\label{eqn:intf(Nt) asymptotic}
        \int_{0}^{1}f_{i}(Nt)\,dt
        =\Big(\frac{N}{\lambda}\Big)^{p}\frac{1}{i!(p-i)!}\Big(\mathcal{A}_{i,0}+(\mathcal{A}_{i,1}^{(1)}+\mathcal{A}_{i,1}^{(2)}+\mathcal{A}_{i,1}^{(3)})\frac{\lambda}{N}+O(N^{-2})\Big), 
    \end{equation}
    where
        \begin{align} \label{def of mathcal Ai0}
        \mathcal{A}_{i,0}
        &=\mathsf{s}^{c(p-i)} \int_{0}^{1} \mathsf{s}^{pt}(1-\mathsf{s}^{t})^{p-i}(1-\mathsf{s}^{c+t})^{i}\,dt, 
        \\
        \mathcal{A}_{i,1}^{(1)} &=\mathsf{s}^{c(p-i+1)}\frac{i(2d-i+1)}{2}\int_{0}^{1}\mathsf{s}^{(p+1)t}(1-\mathsf{s}^{t})^{p-i}(1-\mathsf{s}^{c+t})^{i-1}\,dt,
        \\
        \mathcal{A}_{i,1}^{(2)} &=\mathsf{s}^{c(p-i)} \frac{(p-i)(p-i+1)}{2}\int_{0}^{1} \mathsf{s}^{(p+1)t}(1-\mathsf{s}^{t})^{p-i-1}(1-\mathsf{s}^{c+t})^{i}\,dt, 
        \\
        \mathcal{A}_{i,1}^{(3)} &=\frac{4di+2i^{2}+p-4dp-2ip+p^2}{4}\mathcal{A}_{i,0}. 
    \end{align}
Here, we recall that $\mathsf{s}=e^{-\lambda}.$   
\end{lem}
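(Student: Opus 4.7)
My plan is to substitute $q = e^{-\lambda/N}$ directly into $f_i^{(c,d)}(Nt)$, perform a termwise Taylor expansion of each factor in powers of $1/N$ up to order $N^{-2}$, multiply the expansions, and integrate in $t$. Concretely, I factor
\begin{equation*}
f_i^{(c,d)}(Nt) \;=\; q^{(cN+d)(p-i)+pNt}\cdot \qbinom{cN+d+Nt}{i}\cdot \qbinom{p-i+Nt}{p-i},
\end{equation*}
and treat the three pieces separately. For the scalar prefactor I write $q^{(cN+d)(p-i)+pNt}=\mathsf{s}^{c(p-i)+pt}e^{-\lambda d(p-i)/N}$ and expand the second exponential, producing a leading factor $\mathsf{s}^{c(p-i)+pt}$ together with a $-\lambda d(p-i)/N$ correction. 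For each $q$-binomial I use the product representation
\begin{equation*}
\qbinom{n}{k}=\prod_{m=1}^{k}\frac{1-q^{n-k+m}}{1-q^{m}},
\end{equation*}
and expand each ratio using $1-q^{Nt+m}=(1-\mathsf{s}^{t})\bigl(1+\tfrac{m\lambda\mathsf{s}^{t}}{N(1-\mathsf{s}^{t})}+O(N^{-2})\bigr)$ and $1-q^{m}=\tfrac{m\lambda}{N}\bigl(1-\tfrac{m\lambda}{2N}+O(N^{-2})\bigr)$. Summing the logarithmic corrections over $m=1,\dots,k$ uses only $\sum_{m=1}^{k}m=\frac{k(k+1)}{2}$ and $\sum_{m=1}^{i}(d-i+m)=\frac{i(2d-i+1)}{2}$.

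The outcome of the three expansions, upon multiplication, is
\begin{equation*}
f_i^{(c,d)}(Nt)=\frac{N^p}{\lambda^{p}\,i!(p-i)!}\,\mathsf{s}^{c(p-i)+pt}(1-\mathsf{s}^{t})^{p-i}(1-\mathsf{s}^{c+t})^{i}\Bigl(1+\frac{\lambda}{N}\,G_i(t)+O(N^{-2})\Bigr),
\end{equation*}
where $G_i(t)=-d(p-i)+\tfrac{(p-i)(p-i+1)}{2}\tfrac{\mathsf{s}^{t}}{1-\mathsf{s}^{t}}+\tfrac{(p-i)(p-i+1)}{4}+\tfrac{i(2d-i+1)}{2}\tfrac{\mathsf{s}^{c+t}}{1-\mathsf{s}^{c+t}}+\tfrac{i(i+1)}{4}$. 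I then integrate termwise over $[0,1]$. The leading term reproduces $\mathcal{A}_{i,0}$ exactly. In $G_i(t)$, multiplying the term $\tfrac{i(2d-i+1)}{2}\tfrac{\mathsf{s}^{c+t}}{1-\mathsf{s}^{c+t}}$ by the leading integrand cancels one factor of $(1-\mathsf{s}^{c+t})$ and introduces an extra $\mathsf{s}^{c+t}$, producing $\mathcal{A}_{i,1}^{(1)}$; the term $\tfrac{(p-i)(p-i+1)}{2}\tfrac{\mathsf{s}^{t}}{1-\mathsf{s}^{t}}$ similarly yields $\mathcal{A}_{i,1}^{(2)}$; and the three constant contributions $-d(p-i)+\tfrac{i(i+1)}{4}+\tfrac{(p-i)(p-i+1)}{4}$ combine algebraically to the coefficient $\tfrac{4di+2i^{2}+p-4dp-2ip+p^{2}}{4}$ that multiplies $\mathcal{A}_{i,0}$ in $\mathcal{A}_{i,1}^{(3)}$.

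The one place requiring care is the uniformity of the expansion in $t$: the correction factors $\tfrac{\mathsf{s}^{t}}{1-\mathsf{s}^{t}}$ and $\tfrac{\mathsf{s}^{c+t}}{1-\mathsf{s}^{c+t}}$ have integrable singularities at $t=0$ when $c=0$, and the $O(N^{-2})$ remainder in each $q$-binomial ratio has a corresponding inverse factor. However, the leading weights $(1-\mathsf{s}^{t})^{p-i}$ and $(1-\mathsf{s}^{c+t})^{i}$ absorb these singularities---and when they would not, the multiplying coefficients $(p-i)(p-i+1)$ or $i(2d-i+1)$ vanish identically. This yields a majorant of the form $C(1-\mathsf{s}^{t})^{p-i-2}(1-\mathsf{s}^{c+t})^{i-2}\,\mathbf{1}_{[0,1]}(t)$ (with vanishing coefficient whenever the exponent is negative), which is integrable on $[0,1]$. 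Thus dominated convergence justifies term-by-term integration, the $O(N^{-2})$ bound is preserved under the integral, and \eqref{eqn:intf(Nt) asymptotic} follows. This is the only technical subtlety I anticipate; the remainder of the argument is a routine, if somewhat lengthy, expansion bookkeeping exercise.
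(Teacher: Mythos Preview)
Your proposal is correct and follows essentially the same route as the paper: expand each of the three factors of $f_i^{(c,d)}(Nt)$ in powers of $\lambda/N$, multiply, and integrate in $t$; the paper records the pointwise expansion as \eqref{eqn:f(Nt) asymptotic} and then simply integrates it over $[0,1]$. Your computation of $G_i(t)$ and the identification of the three pieces $\mathcal{A}_{i,1}^{(1)},\mathcal{A}_{i,1}^{(2)},\mathcal{A}_{i,1}^{(3)}$ match the paper line for line, including the algebraic simplification of the constant part to $\tfrac{4di+2i^{2}+p-4dp-2ip+p^{2}}{4}$. If anything, you are more careful than the paper about the uniformity of the $O(N^{-2})$ remainder near $t=0$, which the paper does not discuss; your parenthetical about the majorant is slightly imprecise (for $p-i=1$ the $N^{-2}$ coefficient does not literally vanish, but the double pole in $(1-\mathsf{s}^t)^{-2}$ only arises from cross terms that require $p-i\ge 2$, so the product with $(1-\mathsf{s}^t)^{p-i}$ is still bounded), though the conclusion that the remainder is integrable is correct.
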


\begin{proof}
    First, we find the asymptotic behaviour of $f_{i}^{(c,d)}(Nt)$. By definition, we have
    \begin{equation*}
        \qbinom{(c+t)N+d}{i}=\prod_{m=1}^{i}\frac{1-q^{(c+t)N+d-i+m}}{1-q^{m}}, \qquad  \qbinom{p-i+tN}{p-i}=\prod_{m=1}^{p-i}\frac{1-q^{tN+m}}{1-q^{m}}. 
    \end{equation*}
    Note that as $N \to \infty,$ we have 
    \begin{equation*}
    \begin{split}
        \prod_{m=1}^{i}(1-q^{(c+t)N+d-i+m})
        &=(1-\mathsf{s}^{c+t})^{i}\prod_{m=1}^{i}\Big(1+\frac{\mathsf{s}^{c+t}}{1-\mathsf{s}^{c+t}}(d-i+m)\frac{\lambda}{N}+O(N^{-2})\Big)\\
        &=(1-\mathsf{s}^{c+t})^{i}\Big(1+\frac{\mathsf{s}^{c+t}}{1-\mathsf{s}^{c+t} }\frac{i(2d-i+1)}{2}\frac{\lambda}{N}+O(N^{-2})\Big)
    \end{split}
    \end{equation*}
    and
    \begin{equation*}
        \prod_{m=1}^{i}\frac{1}{1-q^{m}}=\prod_{m=1}^{i}\frac{N}{m\lambda}\Big(1+\frac{m}{2}\frac{\lambda}{N}+O(N^{-2})\Big)=\Big(\frac{N}{\lambda}\Big)^{i}\frac{1}{i!}\Big(1+\frac{i(i+1)}{4}\frac{\lambda}{N}+O(N^{-2})\Big).
    \end{equation*}
    Thus it follows that
    \begin{equation*}
        \qbinom{(c+t)N+d}{i}=\Big(\frac{N}{\lambda}\Big)^{i} \frac{(1-\mathsf{s}^{c+t})^{i}}{i!}\Big(1+\Big(\frac{\mathsf{s}^{c+t}}{1-\mathsf{s}^{c+t}}\frac{i(2d-i+1)}{2}+\frac{i(i+1)}{4}\Big)\frac{\lambda}{N}+O(N^{-2})\Big).
    \end{equation*} 
    Similarly, we obtain
    \begin{equation*}
        \qbinom{Nt+p-i}{p-i}=\Big(\frac{N}{\lambda}\Big)^{p-i}\frac{(1-\mathsf{s}^{t})^{p-i}}{(p-i)!}\Big(1+\Big(\frac{1+\mathsf{s}^{t} }{1-\mathsf{s}^{t}} \frac{(p-i)(p-i+1)}{4}\Big)\frac{\lambda}{N}+O(N^{-2})\Big).
    \end{equation*}
    Combining these asymptotics with
    \begin{equation*}
        q^{(cN+d)(p-i)+ptN}
        =\mathsf{s}^{pt+c(p-i)}\Big(1-d(p-i)\frac{\lambda}{N}+O(N^{-2})\Big),
    \end{equation*}
    we obtain  
    \begin{align} \label{eqn:f(Nt) asymptotic}
    \begin{split}
      &\quad f_{i}^{(c,d)}(Nt) \Big(\frac{\lambda}{N}\Big)^{p}\frac{i!(p-i)!}{(1-\mathsf{s}^{c+t})^{i}(1-\mathsf{s}^{t})^{p-i}\mathsf{s}^{pt+c(p-i)}} -1
        \\
        &=\bigg(\frac{\mathsf{s}^{c+t}}{1-\mathsf{s}^{c+t}}\frac{i(2d-i+1)}{2}+\frac{\mathsf{s}^{t}}{1-\mathsf{s}^{t}}\frac{(p-i)(p-i+1)}{2}+\frac{4di+2i^{2}+p-4dp-2ip+p^{2}}{4}\bigg)\frac{\lambda}{N}  +O(N^{-2}). 
    \end{split}
    \end{align} 
    Now, by integrating both sides with respect to $t$ from $0$ to $1$, we complete the proof.
\end{proof}

\begin{lem}\label{lem:asymptotic of sum f_{l,i}}
    As $N\rightarrow\infty$, we have
    \begin{equation}
        \sum_{j=0}^{N-1}f_{l,i}^{(c,d)}(j)
        =\Big(\frac{N}{\lambda}\Big)^{p+1}\frac{1}{i!(p-i)!}\Big(\mathcal {B}_{i,0}+\mathcal{B}_{i,1}\frac{\lambda}{N}+O(N^{-2})\Big), 
    \end{equation}
    where 
     \begin{align} \label{def of mathcal Bi01}
        \mathcal{B}_{i,0}=\lambda\mathcal{A}_{i,0},\qquad 
\mathcal{B}_{i,1}=\lambda(\mathcal{A}_{i,1}^{(1)}+\mathcal{A}_{i,1}^{(2)}+\mathcal{A}_{i,1}^{(3)}) -\frac{(1-\mathsf{s}^{c+1})^{i}(1-\mathsf{s})^{p-i}\mathsf{s}^{p+c(p-i)} }{2}+\frac{(1-\mathsf{s}^{c})^{p}}{2}\delta_{i,p}.
    \end{align}  
\end{lem}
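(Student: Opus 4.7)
\medskip

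\noindent\textbf{Proof proposal.} The plan is to convert the sum into an integral via the Euler--Maclaurin summation formula and then use the integral asymptotics already established in \eqref{eqn:intf(Nt) asymptotic}, with the boundary correction providing the non-integral contributions to $\mathcal{B}_{i,1}$. Concretely, I would write
\begin{equation*}
  \sum_{j=0}^{N-1} f_{i}^{(c,d)}(j)
  = N\int_{0}^{1} f_{i}^{(c,d)}(Nt)\,dt \;+\; \frac{f_{i}^{(c,d)}(0) - f_{i}^{(c,d)}(N)}{2} \;+\; R_N,
\end{equation*}
where $R_N$ is the remainder containing higher-order Bernoulli corrections $\propto (f_{i}^{(c,d)})^{(2k-1)}(N) - (f_{i}^{(c,d)})^{(2k-1)}(0)$. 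Since $f_{i}^{(c,d)}(Nt)=(N/\lambda)^p O(1)$ uniformly on $[0,1]$, differentiation in $j$ divides by $N$, so each derivative correction is $O(N^{p-1})$; after dividing by $(N/\lambda)^{p+1}$ this yields exactly the $O(N^{-2})$ error announced in the statement.

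For the leading and subleading terms coming from the integral, multiply \eqref{eqn:intf(Nt) asymptotic} by $N$ and factor out $(N/\lambda)^{p+1}$:
\begin{equation*}
  N\int_0^1 f_i^{(c,d)}(Nt)\,dt
  = \Big(\tfrac{N}{\lambda}\Big)^{p+1}\frac{1}{i!(p-i)!}\Big(\lambda\mathcal{A}_{i,0} + \lambda\big(\mathcal{A}_{i,1}^{(1)}+\mathcal{A}_{i,1}^{(2)}+\mathcal{A}_{i,1}^{(3)}\big)\tfrac{\lambda}{N} + O(N^{-2})\Big),
\end{equation*}
which reproduces $\mathcal{B}_{i,0}=\lambda\mathcal{A}_{i,0}$ and the first piece of $\mathcal{B}_{i,1}$.

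The two remaining pieces of $\mathcal{B}_{i,1}$ must come from the endpoint term $\tfrac{1}{2}(f_i^{(c,d)}(0)-f_i^{(c,d)}(N))$. At $j=N$ the definition gives $f_i^{(c,d)}(N) = q^{((c+1)N+d)(p-i)+pN+(cN+d)i+\cdots}$ times two $q$-binomials, and applying exactly the asymptotic expansions for $\qbinom{(c+1)N+d}{i}$ and $\qbinom{p-i+N}{p-i}$ obtained in the proof of the previous lemma yields
\begin{equation*}
  \tfrac{1}{2} f_i^{(c,d)}(N) = \Big(\tfrac{N}{\lambda}\Big)^{p}\frac{1}{i!(p-i)!}\cdot\frac{(1-\mathsf{s}^{c+1})^i(1-\mathsf{s})^{p-i}\mathsf{s}^{p+c(p-i)}}{2} + O(N^{p-1}),
\end{equation*}
which, written with the prefactor $(N/\lambda)^{p+1}\cdot\lambda/N$, matches the minus sign term in $\mathcal{B}_{i,1}$. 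At $j=0$, $f_i^{(c,d)}(0) = q^{(cN+d)(p-i)}\qbinom{cN+d}{i}$, whose leading order is $\mathsf{s}^{c(p-i)}(N/\lambda)^i(1-\mathsf{s}^c)^i/i!$; this is of order $N^i$ and therefore lies strictly below the $N^p$ level unless $i=p$, in which case it contributes exactly $(N/\lambda)^p(1-\mathsf{s}^c)^p/(2\, p!)$, producing the $\tfrac{1}{2}(1-\mathsf{s}^c)^p\,\delta_{i,p}$ term.

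The only delicate point --- and the one I would treat with some care --- is the remainder analysis, since $f_i^{(c,d)}$ depends on $N$ both through $q=e^{-\lambda/N}$ and through its argument. The cleanest way around this is to write $f_i^{(c,d)}(Nt)=(N/\lambda)^p G_N(t)$ with $G_N(t)$ and all its derivatives uniformly bounded on $[0,1]$ (this is visible from \eqref{eqn:f(Nt) asymptotic} together with the analogous expansions for $\qbinom{\cdot}{\cdot}$), so that the standard Euler--Maclaurin remainder bound applied to $G_N$ at scale $1/N$ gives $R_N = (N/\lambda)^p\cdot O(N^{-1}) = O(N^{p-1})$, which dominates the cross-contributions from the $O(N^{-2})$ terms in \eqref{eqn:intf(Nt) asymptotic} and the sub-leading pieces of $f_i^{(c,d)}(0)$ and $f_i^{(c,d)}(N)$. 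Assembling these three inputs gives exactly the announced expansion with the stated $\mathcal{B}_{i,0}$ and $\mathcal{B}_{i,1}$.
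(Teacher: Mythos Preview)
Your proposal is correct and follows essentially the same route as the paper's proof: apply the Euler--Maclaurin formula to replace the sum by $N\int_0^1 f_i^{(c,d)}(Nt)\,dt$ plus the endpoint correction $\tfrac12(f_i^{(c,d)}(0)-f_i^{(c,d)}(N))$, read off the integral contribution from \eqref{eqn:intf(Nt) asymptotic}, and evaluate the two boundary terms exactly as you do to recover the remaining pieces of $\mathcal{B}_{i,1}$. Your treatment of the remainder $R_N$ via the rescaling $f_i^{(c,d)}(Nt)=(N/\lambda)^p G_N(t)$ is in fact more careful than the paper's, which simply invokes Euler--Maclaurin with a $\sim$ and moves on.
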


\begin{proof}
    Applying the Euler–Maclaurin formula (see e.g. \cite[Section 2.10]{NIST}), we have 
    \begin{equation}\label{eqn:Euler Maclaurin}
        \sum_{j=0}^{N-1}f_{i}^{(c,d)}(j)\sim\int_{0}^{N}f_{i}^{(c,d)}(x)\,dx-\frac{f_{i}^{(c,d)}(N)-f_{i}^{(c,d)}(0)}{2}
    \end{equation}
    as $N\rightarrow\infty$. Substituting $t=0$ and $t=1$ into \eqref{eqn:f(Nt) asymptotic}, it follows that 
    \begin{align}
    \label{eqn:f(0) asymptotic}
        f_{i}^{(c,d)}(0)&=\Big(\frac{N}{\lambda}\Big)^{i}\frac{\mathsf{s}^{c(p-i)}(1-\mathsf{s}^{c})^{i}}{i!}(1+O(N^{-1})),
        \\
        \label{eqn:f(N) asymptotic}
        f_{i}^{(c,d)}(N)&=\Big(\frac{N}{\lambda}\Big)^{p}\frac{(1-\mathsf{s}^{c+1})^{i}(1-\mathsf{s})^{p-i}\mathsf{s}^{p+c(p-i)}}{i!(p-i)!}(1+O(N^{-1})).
    \end{align} 
  Since $f_{i}^{(c,d)}(0)$ is of order $N^{i}$, only the term with $i=p$ 
contributes to the first subleading order. Combining 
\eqref{eqn:intf(Nt) asymptotic}, \eqref{eqn:f(N) asymptotic}, and 
\eqref{eqn:f(0) asymptotic} with \eqref{eqn:Euler Maclaurin} 
then completes the proof.
\end{proof}

We are ready to show Theorem~\ref{Thm:Spectralmoment} (B). 

\begin{proof}[Proof of Theorem~\ref{Thm:Spectralmoment} (B)]
By combining Lemmas \ref{lem:large N of qfactorials} and \ref{lem:asymptotic of sum f_{l,i}}  with \eqref{eqn:rewrite mNp}, we obtain 
    \begin{equation} \label{mNp interms of mathcal B}
        m_{N,p}^{(\mathrm{qL})}
        =\frac{N}{\lambda}\sum_{i=0}^{p}\binom{p}{i}^{2}\Big(\mathcal{B}_{i,0}+\Big(\mathcal{B}_{i,1}+\frac{-p^{2}-p-2i^2+2ip}{4}\mathcal{B}_{i,0}\Big)\frac{\lambda}{N}+O(N^{-2})\Big).
    \end{equation}
Then it follows from \eqref{def of mathcal Ai0} and \eqref{def of mathcal Bi01} that 
\begin{equation}  \label{def of leading order moment by integral}
\lim_{N \to \infty} \frac{ 1 }{N}m_{N,p}^{(\mathrm{qL})} =\int_{0}^{1} \mathsf{s}^{p t}\sum_{i=0}^{p}\Big(1-\mathsf{s}^{c+t}\Big)^{i}\Big(\mathsf{s}^{c}(1-\mathsf{s}^{t})\Big)^{p-i}\binom{p}{i}^{2}\,dt. 
\end{equation}
Now, in order to show the first assertion of Theorem~\ref{Thm:Spectralmoment} (B), it remains to show that the RHS of \eqref{def of leading order moment by integral} is same as the RHS of \eqref{def of leading order moment}. 

For this, note that by the change of variables $v=1-e^{- t\lambda}$, the RHS of \eqref{def of leading order moment by integral} is written as   
    \begin{align*}
       \frac{e^{-c\lambda p}}{\lambda} \int_{0}^{1-e^{-\lambda}}  (1-v)^{p-1} \sum_{i=0}^{p}\Big( e^{c\lambda}-1+v \Big)^{i}v^{p-i}\binom{p}{i}^{2}\,dv. 
    \end{align*}
    Note here that with $u=e^{c\lambda}-1,$ we have 
    \begin{align*}
        \sum_{i=0}^{p} ( u+v )^{i}v^{p-i}\binom{p}{i}^{2}& = \sum_{i=0}^p \sum_{k=0}^i  \binom{i}{k}\binom{p}{i}^{2} u^{i-k} v^{p-i+k}
        =\sum_{m=0}^p u^{p-m} \bigg( \sum_{r=0}^m \binom{p}{r}^2 \binom{p-r}{m-r} \bigg) v^m, 
    \end{align*}
    where we have used the index changes $r=p-i$ and $k=m-r$. Furthermore, by using 
    \begin{align*}
        \sum_{r=0}^m \binom{p}{r}^2 \binom{p-r}{m-r} &= \binom{p}{m} \sum_{r=0}^m  \binom{p}{r} \binom{m}{m-r} = \binom{p}{m} \binom{p+m}{m},  
    \end{align*}
    we obtain 
    \begin{align*}
        \mathcal{M}^{ \rm (qL) }_{p,0}=  \frac{e^{-c\lambda p}}{\lambda} \sum_{m=0}^p   \binom{p}{m} \binom{p+m}{m} (e^{c\lambda}-1)^{p-m} \int_{0}^{1-e^{-\lambda}} v^m  (1-v)^{p-1}   \,dv. 
    \end{align*}
    Then the desired formula, the RHS of \eqref{def of leading order moment} follows from the definition of the regularised incomplete beta function \eqref{def of beta ftn}. 
    
    Next, we show the second assertion of Theorem~\ref{Thm:Spectralmoment} (B). 
    Note that, when $c=0$, by \eqref{def of mathcal Bi01}, the summand in \eqref{mNp interms of mathcal B} is computed as 
    \begin{align*}
        &\quad \mathcal{B}_{i,1}+\frac{-p^2-p-2i^2+2ip}{4}\mathcal{B}_{i,0}
        \\
        &=\frac{2di+p^2-2ip+p}{2}\int_{0}^{1- \mathsf{s} }v^{p-1}(1-v)^{p}\,dv+d(i-p)\int_{0}^{1-\mathsf{s} }v^{p}(1-v)^{p-1}\,dv-\frac{\mathsf{s}^{p}(1-\mathsf{s})^{p} }{2}.
    \end{align*}
    Then by using \eqref{def of beta ftn} and the recursion formula of the incomplete beta function \cite[Eq.~(8.17.18)]{NIST} 
    \begin{equation*}
        I_{1- \mathsf{s} }(p,p+1)=I_{1- \mathsf{s} }(p+1,p)+ \binom{2p}{p} \mathsf{s}^{p}(1-\mathsf{s})^{p} ,
    \end{equation*}
    we obtain that 
    \begin{align*}
        & \quad \sum_{i=0}^{p}\binom{p}{i}^{2} \Big(\mathcal{B}_{i,1}+\frac{-p^{2}-5p-2i^2+2ip}{4}\mathcal{B}_{i,0}\Big) 
        \\
        &=\sum_{i=0}^{p}\binom{p}{i}^{2}\Big(\frac{p^{2}+p-2ip-2dp+4di}{2}\frac{p!(p-1)!}{(2p)!}I_{1-\mathsf{s}}(p+1,p)+\frac{2di+p^{2}-2ip}{2p}\mathsf{s}^{p}(1-\mathsf{s})^{p} \Big)\\
        &= \frac{1}{2}I_{1-\mathsf{s}}(p+1,p)+\frac{d}{2}\binom{2p}{p}\mathsf{s}^{p}(1-\mathsf{s})^{p} .
    \end{align*}
    Combining this with \eqref{mNp interms of mathcal B}, the desired asymptotic expansion \eqref{qLUE moment expansion c0} follows. 
    \end{proof}

\section{Proofs of Theorem \ref{Thm:limiting density in growth regime}} \label{Section_Thm main proof}

In this section, we prove Theorem~\ref{Thm:limiting density in growth regime}. Parts (A), (B), and (C) are established in the subsequent subsections.

\subsection{Proof of Theorem \ref{Thm:limiting density in growth regime} (A)} \label{Subsec_Thm main (A)}

By Theorem~\ref{Thm:Spectralmoment}(B), this part follows from a standard argument in random matrix theory, which derives the limiting spectral distribution from the large-$N$ behaviour of the moments.
We denote by  
\begin{equation}
    g^{(c)}(y):=\int_{0}^{1}\frac{\rho^{(c)}(x)}{y-x} \, dx, \qquad y \in \C\setminus \R 
\end{equation}
the Cauchy transform of the limiting density $\rho^{(c)}$. Then by Theorem~\ref{Thm:Spectralmoment} (B), we have 
\begin{align}
\begin{split}
    g^{(c)}(y)
    &= \frac{1}{y}\sum_{p=0}^{\infty}\frac{1}{y^{p}}\int_{0}^{1}x^{p}\rho^{(c)}(x) \, dx
    =\frac{1}{y}+\frac{1}{y}\sum_{p=1}^{\infty}\frac{1}{y^{p}}\int_{0}^{1}\mathsf{s}^{pu}\sum_{i=0}^{p}(1-\mathsf{s}^{c+u})^{i}(\mathsf{s}^{c}(1-\mathsf{s}^{u}))^{p-i}\binom{p}{i}^{2}\, du\\
    &=\frac{1}{\lambda y}\int_{\mathsf{s}}^{1}\frac{1}{t}\sum_{p=0}^{\infty}\sum_{i=0}^{p}\binom{p}{i}^{2}\Big(\frac{1-\mathsf{s}^{c}t}{\mathsf{s}^{c}(1-t)}\Big)^{i}\Big(\frac{t(1-t)\mathsf{s}^{c}}{y}\Big)^{p}\, dt
\end{split}
\end{align} 
as $ y \to \infty.$ Here, we used the change of variable $\mathsf{s}^{u}=t$ in the last equality.
To simplify the double sum in the integrand, we first note that
\begin{equation}
    \sum_{i=0}^{p}\binom{p}{i}z^{i}=(1-z)^{p}P_{p}\Big(\frac{1+z}{1-z}\Big),
\end{equation}
where $P_n$ is the Legendre polynomial. Then by the well-known generating function of the Legendre polynomial (see e.g. \cite[Eq.~(9.8.68)]{KLS10}), it follows that
\begin{equation} \label{3.15}
  \sum_{p=0}^{\infty}\sum_{i=0}^{p}\binom{p}{i}^{2}z^{i}w^{p}=\frac{1}{\sqrt{1-2w(1+z)+w^2 (1-z)^2}}, \qquad \textup{for }\abs{w}<\min{\Big|{\frac{1+z\pm2\sqrt{z}}{(1-z)^2}}}\Big|.
\end{equation}
Using this identity and substituting $z$ and $w$ accordingly, we obtain
\begin{equation}
    g^{(c)}(y)=\frac{1}{\lambda}\int_{\mathsf{s}}^{1}\frac{1}{t}\frac{dt}{\sqrt{y^2 -2(t+\mathsf{s}^{c}t-2\mathsf{s}^{c}t^2)y+t^2(1-\mathsf{s}^{c})^2}}.
\end{equation}
Notice here that the condition for $w$ in \eqref{3.15} is satisfied for
\begin{equation}
    y\geq \begin{cases}
       b& \textup{if } \lambda \leq \lambda_c,
       \smallskip \\
       1 & \textup{if } \lambda >\lambda_c, 
    \end{cases}
\end{equation}
where $b$ is given by \eqref{def of endpts a and b}. 
Moreover, note that for a fixed $0<x<1$, the inequality
\begin{equation}\label{eqn:x inequality}
    x^2-2(t+\mathsf{s}^{c}t -2\mathsf{s}^{c}t^2)x+t^2(1-\mathsf{s}^{c})^2 <0
\end{equation}
is satisfied for 
\begin{equation} 
    t\in (\max\{x_{0}-x_{1},\mathsf{s}\},x_0 +x_1),  
\end{equation}
where 
 \begin{equation}\label{eqn:def of x0x1}
        x_0=\frac{x(1+\mathsf{s}^{c})}{(1-\mathsf{s}^{c})^2 +4\mathsf{s}^{c}x}, \qquad 
        x_1=\frac{2x\sqrt{\mathsf{s}^{c}(1-x)}}{(1-\mathsf{s}^{c})^{2}+ 4 \mathsf{s}^{c}x}.
    \end{equation}
Note that $x_0 \pm x_1$ are defined as the roots of the LHS of \eqref{eqn:x inequality}. 
Then by the Sokhotski-Plemelj formula, we have
\begin{equation}\label{eqn:integral representation of rho}
\begin{split}
    \rho^{(c)}(x)
    &=\frac{1}{\pi}\lim_{\varepsilon\downarrow0}\Im\, g^{(c)}(x-i\varepsilon) =\frac{1}{\pi\lambda}\int_{\max\{x_0-x_1,\mathsf{s}\}}^{x_0 +x_1}\frac{1}{t}\frac{dt}{\sqrt{-x^2 +2(t+\mathsf{s}^{c}t-2\mathsf{s}^{c}t^2)x-t^2(1-\mathsf{s}^{c})^2}}
    \\
    &=\frac{1}{\pi\lambda \sqrt{(1-\mathsf{s}^{c})^2+4\mathsf{s}^{c}x}}\int_{\max\{x_0-x_1,\mathsf{s}\}}^{x_0 +x_1}\frac{1}{t}\frac{dt}{\sqrt{(x_0+x_1 -t)(t-(x_0 -x_1))}}. 
\end{split}
\end{equation}
Then it follows that
\begin{equation}
    \begin{split}
        \rho^{(c)}(x)
        &=\frac{1}{\pi\lambda \sqrt{(1-\mathsf{s}^{c})^2+4\mathsf{s}^{c}x}}\int_{\mathsf{s}}^{x_0 +x_1}\frac{1}{t}\frac{dt}{\sqrt{(x_0+x_1 -t)(t-(x_0 -x_1))}}\mathbbm{1}_{(a,b)}(x)
        \\
        &\quad+\frac{1}{\pi\lambda \sqrt{(1-\mathsf{s}^{c})^2+4\mathsf{s}^{c}x}}
        \begin{cases}
            0 & \textup{if } \lambda \leq\lambda _c ,
            \smallskip 
            \\
            \displaystyle\int_{{x_0-x_1}}^{x_0 +x_1}\frac{1}{t}\frac{dt}{\sqrt{(x_0+x_1 -t)(t-(x_0 -x_1))}}\mathbbm{1}_{(b,1)}(x) &\textup{if } \lambda >\lambda_c. 
        \end{cases}
    \end{split}
\end{equation}
We now simplify the integrals in this expression. 

First, note that if $e^{-\lambda}<x_0 -x_1$, the integral evaluation
\begin{equation*}
    \int_{\alpha}^{\beta}\frac{1}{t}\frac{dt}{\sqrt{(t-\alpha)(\beta-t)}}=\pi\abs{\alpha \beta}^{-1/2} 
\end{equation*}
gives rise to 
\begin{equation}
    \frac{1}{\pi\lambda \sqrt{(1-\mathsf{s}^{c})^2+4\mathsf{s}^{c}x}}\int_{x_0-x_1}^{x_0 +x_1}\frac{1}{t}\frac{dt}{\sqrt{(x_0+x_1 -t)(t-(x_0 -x_1))}}=\frac{1}{\lambda x}. 
\end{equation} 
On the other hand, for $x$ satisfying $x_0-x_1<\mathsf{s}$, we make use of the integral evaluation
\begin{equation*}
    \int_{\alpha}^{\gamma}\frac{1}{t}\frac{dt}{\sqrt{(t-\alpha)(\beta-t)}}
    =\int_{0}^{\theta_{\gamma}}\frac{2 d\theta}{({\alpha+\beta})+({\beta-\alpha})\cos\theta},\quad \alpha<\gamma<\beta
\end{equation*}
where $\cos\theta_{\gamma}=(2\gamma-\alpha-\beta)/(\beta-\alpha)$, and obtain 
\begin{equation}
    \frac{1}{\pi\lambda \sqrt{(1-\mathsf{s}^{c})^2+4\mathsf{s}^{c}x}}\int_{\mathsf{s}}^{x_0 +x_1}\frac{1}{t}\frac{dt}{\sqrt{(x_0+x_1 -t)(t-(x_0 -x_1))}}
    =\frac{2}{\pi\lambda x}\arctan\sqrt{\frac{x_0-x_1}{x_0 +x_1}\frac{{x_0+x_1-\mathsf{s}}}{{\mathsf{s}-x_0+x_1}}}.
\end{equation}
Combining the above, it follows that 
\begin{equation} \label{def of limiting density in Section 3}
            \rho^{(c)}(x)
            =\frac{2}{\pi\lambda x}\arctan\sqrt{\frac{x_0-x_1}{x_0 +x_1}\frac{{x_0+x_1- \mathsf{s}  }}{{ \mathsf{s} -x_0+x_1}}}\mathbbm{1}_{(a,b)}(x) +\begin{cases}
                0& \textup{if } \lambda \leq \lambda_c , \smallskip \\
                \displaystyle\frac{1}{\lambda x}\mathbbm{1}_{(b,1)}(x) & \textup{if } \lambda > \lambda_c,
            \end{cases} 
    \end{equation} 
Furthermore, by \eqref{eqn:def of x0x1}, we have the expression \eqref{eqn:def of limiting density v0}. 
On the other hand, by \eqref{def of endpts a and b}, we have  
\begin{align*}
\sqrt{ab}=\mathsf{s}-\mathsf{s}^c,\qquad  \sqrt{(1-a)(1-b)}=|1-\mathsf{s}-\mathsf{s}^{1+c}|. 
\end{align*} 
Then by straightforward computations, one can show that \eqref{def of limiting density in Section 3} coincides with \eqref{eqn:def of limiting density}.  
Since $\rho^{(c)}$ is non-negative and integrates to one, it defines a probability density and can therefore be integrated against any continuous test function. This shows the desired convergence. 
\qed

\subsection{Proof of Theorem~\ref{Thm:limiting density in growth regime} (B)} \label{Section_Thm B}

In this subsection, we prove Theorem~\ref{Thm:limiting density in growth regime} (B).
By the general theory of large deviation principles \cite{DD22,HMO25}, the main task is to show that the minimiser of the functional $I[\mu]$ in
\eqref{eq:functional_rhp} is given by \eqref{eqn:def of limiting density}.
For this, we solve  the associated Euler--Lagrange variational conditions explicitly.

To identify the rate function $I[\mu]$, we begin with the Hamiltonian in \eqref{def of jpdf for qL} and analyse its large-$N$ behaviour, which naturally suggests a candidate for the rate function.
For this purpose, we require the asymptotic behaviour of the weight function.

\begin{lem} \label{Lem_asymp of weight}
Let $q=e^{-\lambda/N}$ with $\lambda \ge 0$. Let $\alpha$ be scaled according to \eqref{def of alpha scaling}. Then for $0<x<1$, the weight function \eqref{def of qLUE weight} satisfies 
\begin{equation}
\log  w^{(\mathrm{qL})}(x;q) = -N \Big( \frac{1}{\lambda} \Li_2(x) -c \log x \Big)+O(1),
\end{equation}
as $N \to \infty.$ Here,  $\Li_2(x)$ is given by \eqref{eq:Li2_def}. 
\end{lem}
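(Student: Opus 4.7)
The plan is to split the logarithm of the weight into its two natural pieces and analyse each separately:
\begin{equation*}
\log w^{(\mathrm{qL})}(x;q) = \alpha \log x + \log (qx;q)_\infty = (cN+d)\log x + \sum_{k=1}^\infty \log(1-q^k x).
\end{equation*}
The first piece is immediate: since $d$ is fixed, $(cN+d)\log x = cN\log x + O(1)$, which already accounts for the $-(-c\log x)N = cN\log x$ contribution on the right-hand side. Thus everything reduces to showing
\begin{equation*}
\sum_{k=1}^\infty \log(1-q^k x) = -\frac{N}{\lambda}\Li_2(x) + O(1), \qquad N\to\infty,
\end{equation*}
for each fixed $x \in (0,1)$.

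To handle this infinite sum, I would view it as a Riemann sum with spacing $\lambda/N$ for the smooth, integrable function $t \mapsto \log(1-e^{-t}x)$ on $[0,\infty)$. Concretely, writing $q^k = e^{-k\lambda/N}$ and setting $t_k = k\lambda/N$, the target integral is
\begin{equation*}
\frac{N}{\lambda}\int_{0}^{\infty} \log(1-e^{-t}x)\,dt.
\end{equation*}
The substitution $u = e^{-t}x$ (so $du/u = -dt$, with $u$ ranging from $x$ down to $0$) reduces this integral to
\begin{equation*}
\int_{0}^{x}\frac{\log(1-u)}{u}\,du = -\Li_2(x),
\end{equation*}
by the definition \eqref{eq:Li2_def}. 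Thus the leading term is precisely $-\frac{N}{\lambda}\Li_2(x)$.

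It remains to justify the $O(1)$ error in passing from the sum to the integral. For this I would apply the Euler--Maclaurin formula (or the trapezoidal-rule error bound) to $f(t) = \log(1-e^{-t}x)$: one has $f \in C^\infty([0,\infty))$ with $f(t) = -e^{-t}x + O(e^{-2t})$ as $t\to\infty$, so $f$ and $f'$ are integrable on $[0,\infty)$, and near $t=0$ the function behaves like $\log(1-x)$ which is a finite constant for $x<1$ fixed. Hence
\begin{equation*}
\sum_{k=1}^{\infty} f(k\lambda/N) = \frac{N}{\lambda}\int_{0}^{\infty} f(t)\,dt - \tfrac{1}{2}f(0) + O(1/N),
\end{equation*}
where the boundary term $-\tfrac12 f(0) = -\tfrac12\log(1-x)$ is $O(1)$ and the derivative correction is bounded because $\int_0^\infty |f'(t)|\,dt < \infty$. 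An alternative, equally clean route is to expand $\log(1-q^k x) = -\sum_{n\geq 1}(q^k x)^n/n$, swap the sums to obtain $-\sum_{n\geq 1}\frac{x^n}{n}\frac{q^n}{1-q^n}$, and then insert the expansion $\frac{q^n}{1-q^n} = \frac{N}{n\lambda} - \tfrac12 + O(n/N)$ uniformly for $n \le N$, controlling the tail $n > N$ geometrically using $x<1$. The main (and only mildly delicate) obstacle is controlling this remainder uniformly in $n$, but since $x$ is fixed away from $1$ the exponential decay of $x^n$ absorbs all polynomial factors, so the error is indeed $O(1)$. Combining these estimates with the $(cN+d)\log x$ contribution yields the claimed asymptotic.
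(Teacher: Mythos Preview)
Your proof is correct. The paper's own proof is a one-line citation of McIntosh's asymptotic formula for $q$-shifted factorials \cite[Theorem~4]{Mc99} (see also \cite[Lemma~4.1]{FKP24}), which is precisely the statement $\log(qx;q)_\infty = -\tfrac{N}{\lambda}\Li_2(x)+O(1)$ that you derive from scratch. Your Euler--Maclaurin/Riemann-sum argument (and the alternative power-series route via $\sum_n \frac{x^n}{n}\frac{q^n}{1-q^n}$) is essentially how McIntosh's result itself is established, so you are re-deriving the cited lemma rather than taking a genuinely different mathematical path. The benefit of your write-up is that it is self-contained and makes the subleading term $-\tfrac12\log(1-x)$ visible; the paper's version is terser but pushes the work into the reference.
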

\begin{proof}
This follows immediately from the asymptotic behaviour of the $q$-Pochhammer symbol established in \cite[Theorem~4]{Mc99}; see also \cite[Lemma~4.1]{FKP24}.   
\end{proof}

By Lemma~\ref{Lem_asymp of weight}, we formally obtain the asymptotic of the joint probability distribution function \eqref{def of jpdf for qL}: 
\begin{equation}
 \prod_{1\le j<k \le N}(x_j-x_k)^2 \prod_{j=1}^N \omega^{\rm (qL)}(x_j)  \approx \exp \Big( -N^2 I[\mu] \Big), 
\end{equation}
where $I[\mu]$ is given by \eqref{eq:functional_rhp}. Here, the $N^2$ terms is coherent with the speed of convergence claimed in Theorem \ref{Thm:limiting density in growth regime} (B).

A standard approach to identifying the unique minimiser of an energy
functional of the form $I[\mu]$ is via the Euler--Lagrange variational
conditions. In the present setting, the corresponding Euler--Lagrange conditions are formulated as follows;
see \cite[Section~2.1.4]{BKMM}.

\begin{prop} \label{Prop_EL conditions}
Let $\mu(dx)=\rho^{(c)}(x)\,dx$, where $\rho^{(c)}$ is given by \eqref{eqn:def of limiting density}. Define  
\begin{equation} \label{def of parameter d}
d= \begin{cases}
b &\textup{if } \lambda < \lambda_c,
\smallskip 
\\
1 &\textup{if } \lambda > \lambda_c. 
\end{cases}
\end{equation}
Then there exists an $\Omega \in \R$ such that the following Euler-Lagrange variational conditions hold: 
\begin{equation} \label{def of EL conditions}
 -2 \int_a^d \log(|x-y|)\, \mu(dy) + \frac{1}{\lambda}\Li_2(x) -c\log(x) 
 \begin{cases}
    = -\Omega &\textup{if } x \in (a,b),
    \smallskip 
    \\
    \ge -\Omega  &\textup{if } x\in(0,a)\cup(d,1), 
    \smallskip 
    \\
    \le -\Omega  &\textup{if }  x\in(b,d).
 \end{cases}
\end{equation}
\end{prop}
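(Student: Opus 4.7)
The plan is to verify the three Euler--Lagrange conditions in \eqref{def of EL conditions} directly, by analysing the effective potential
\[
F(x) := -2\int_a^d \log|x-y|\,\rho^{(c)}(y)\,dy + \frac{1}{\lambda}\Li_2(x) - c\log x
\]
on each of the three relevant regions; the constant $-\Omega$ will emerge as the common value of $F$ on the band $(a,b)$. The main tool is the explicit Cauchy transform $g^{(c)}(y)$ of $\rho^{(c)}$ obtained in Subsection~\ref{Subsec_Thm main (A)}, together with the Sokhotski--Plemelj relation $\pv\!\int_a^d \rho^{(c)}(y)/(x-y)\,dy = \Re\,g^{(c)}(x\pm i0)$ for $x$ in the support of $\rho^{(c)}$.

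For the equality on $(a,b)$, I would first differentiate: using $\frac{d}{dx}\Li_2(x) = -\log(1-x)/x$, one has
\[
F'(x) = -2\,\pv\!\int_a^d \frac{\rho^{(c)}(y)}{x-y}\,dy - \frac{\log(1-x)}{\lambda x} - \frac{c}{x},
\]
so the task reduces to showing that $\Re\,g^{(c)}(x) = -\tfrac{1}{2}(\log(1-x)/(\lambda x) + c/x)$ on $(a,b)$. Plugging in the formula
\[
g^{(c)}(y) = \frac{1}{\lambda}\int_{\mathsf{s}}^{1}\frac{dt}{t\sqrt{y^{2}-2(t+\mathsf{s}^{c}t-2\mathsf{s}^{c}t^{2})y+t^{2}(1-\mathsf{s}^{c})^{2}}},
\]
the same decomposition of the $t$-axis according to the sign of the radicand used in \eqref{eqn:integral representation of rho} to extract the imaginary part now produces the real part as an integral of $dt/(t\sqrt{Q(t)})$ over the complement of $(\max\{x_0-x_1,\mathsf{s}\},\,x_0+x_1)$ inside $(\mathsf{s},1)$, with $x_0,x_1$ as in \eqref{eqn:def of x0x1}. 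The explicit evaluations collected in Appendix~\ref{app:integrals} should then reduce this to $-\tfrac{1}{2}V'(x)$, so that $F'\equiv 0$ on $(a,b)$. I define $-\Omega$ as the constant value of $F$ there, which by continuity also equals $F(a^+)$ and $F(b^-)$.

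For the two remaining inequalities, the strategy is a monotonicity argument combined with the boundary matching at $a,b$. On $(0,a)$ and on the empty portion of $(d,1)$, the integral in $F'(x)$ ceases to be singular, and the same real-part calculation---now over all of $(\mathsf{s},1)$, since the radicand is positive throughout---gives a closed-form expression whose sign can be read off: one expects $F'<0$ on $(0,a)$ and $F'>0$ on the empty portion of $(d,1)$, which together with $F(a)=F(b)=-\Omega$ (and $F(d)=-\Omega$ in the supercritical case) forces $F\ge -\Omega$ there. In the supercritical case $\lambda>\lambda_c$, the saturated region $(b,1)$ has $\rho^{(c)}(y)=1/(\lambda y)$, and a partial-fraction computation gives $\pv\!\int_b^1 dy/[y(x-y)] = (1/x)\log((x-b)/[b(1-x)])$; combining this with the band contribution and the critical relation \eqref{eqn:def of lambdac} together with the explicit form of $b$ in \eqref{def of endpts a and b}, one should find $F'\equiv 0$ on $(b,1)$, so that $F\equiv -\Omega$ there---the condition $F\le -\Omega$ holding as equality, consistent with the continuous onset of saturation at $\lambda=\lambda_c$.

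The chief obstacle is the closed-form evaluation of $\Re\,g^{(c)}(x)$ in the band region: it depends sensitively on whether $\mathsf{s}$ lies inside or outside $(x_0-x_1,\,x_0+x_1)$---precisely the distinction between the subcritical and supercritical regimes---and on the delicate cancellations between the two endpoint contributions to $\int dt/(t\sqrt{Q(t)})$. A secondary technical point is the uniform treatment of the phase transition at $\lambda=\lambda_c$, where $b$ merges with $1$ and the saturated region appears; continuity in $\lambda$ at $\lambda_c$ of $F$ and of the Lagrange multiplier $-\Omega$ should provide an internal consistency check, and strict convexity of $I[\mu]$ on $\mathcal{P}_\lambda(0,1)$ (modulo translations by probability measures) will ensure that these Euler--Lagrange conditions in fact characterise the unique minimiser.
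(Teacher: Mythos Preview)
Your verification strategy---use the integral representation of $g^{(c)}$ from Subsection~\ref{Subsec_Thm main (A)} to extract $\Re g^{(c)}(x)$ and thereby compute $F'(x)$ on each region---is genuinely different from the paper's route. The paper instead poses and solves a scalar Riemann--Hilbert problem for the Cauchy transform $g(z)$: after the substitution $\varphi(z)=g(z)/R(z)$ with $R(z)=\sqrt{(z-a)(z-b)}$, it writes $\varphi$ as a Cauchy integral along $(a,d)$, matches the large-$z$ expansion to the nonlinear system \eqref{eqn for system 1}--\eqref{eqn for system 3} (closed using the first moment $\mathcal{M}_1^{(\mathrm{qL})}$ from Theorem~\ref{Thm:Spectralmoment}), recovers $\rho^{(c)}$ from $g_+-g_-$, and then checks the inequalities via an explicit formula for $h(x)=g_+(x)+g_-(x)+c/x+\log(1-x)/(\lambda x)=-F'(x)$ expressed through the elementary integrals $J(x),I(x)$ of \eqref{integral eval of J}--\eqref{integral eval of I}. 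One caveat on your side: the evaluations in Appendix~\ref{app:integrals} are in the spectral variable $s\in(a,b)$ with weight $1/R_+(s)$ and will not directly handle your integrals $\int dt/(t\sqrt{Q(t)})$ in the auxiliary variable $t\in(\mathsf{s},1)$; that reduction is a genuine computation you have not supplied.

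The real gap is your claim that $F'\equiv 0$ on the saturated region $(b,1)$ when $\lambda>\lambda_c$. This is false, and the error is conceptual: on a saturated region the variational inequality $F\le -\Omega$ is generically \emph{strict} in the interior. Concretely, your own partial-fraction computation combined with the external-field derivative gives
\[
-\frac{2}{\lambda}\,\pv\!\int_b^1 \frac{dy}{y(x-y)} \;-\; \frac{\log(1-x)}{\lambda x} \;-\; \frac{c}{x}
\;=\; \frac{1}{\lambda x}\bigl[\,2\log b - 2\log(x-b) + \log(1-x)\,\bigr] - \frac{c}{x},
\]
which diverges to $-\infty$ as $x\to 1^-$; the band contribution $-2\int_a^b \rho^{(c)}(y)/(x-y)\,dy$ remains bounded on $(b,1)$ and cannot cancel this divergence, nor does \eqref{eqn:def of lambdac} or the formula for $b$ produce the needed cancellation. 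Hence $F'(x)$ runs from $0$ at $x=b$ down to $-\infty$ at $x=1$, and $F$ is strictly decreasing there. What you must actually show on $(b,1)$ is $F'\le 0$ (equivalently $h\ge 0$), not $F'=0$; your ``continuous onset of saturation'' heuristic confuses continuity of the density at $b$ with constancy of the effective potential beyond $b$.
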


Although it is sufficient to verify this proposition directly, it is more
instructive to illustrate how one can construct a probability measure
satisfying \eqref{def of EL conditions} in a setting where the solution is not known \emph{a priori}. 

For this purpose, we first recall some standard definitions from \cite{BKMM}.
Let $\mu(dx)\in \mathcal{P}_{\lambda}(0,1)$. For any interval
$I \subseteq(0,1)$, we use the following terminology: 
\begin{itemize}
    \item $I$ is called a \textit{void} if $\mu(I)\equiv 0$;
\smallskip
    \item $I$ is called a \textit{saturated region} if the upper constraint $\frac{1}{\lambda x}$ is active for all $x\in I$, i.e. $\mu(I)=\int_I\frac{1}{\lambda x}dx$; 
\smallskip 
    \item $I$ is called a \textit{band} if it is neither a void nor a saturated region. 
\end{itemize}
To construct a probability measure $\mu(dx)$ satisfying
\eqref{def of EL conditions}, one needs to assume a basic structural form of the measure. To be more precise, we suppose that there exist parameters
$0 \le a<b \le d\leq 1$ such that $(0,a)$ and $(d,1)$ are voids; $(a,b)$ is a band; and  $(b,d)$ is a saturated region. 
Furthermore, we assume that $\mu(dx)$ is absolutely continuous with respect
to the Lebesgue measure, so that there exists a density $\mu(x)$ satisfying
$\mu(dx)=\mu(x)\,dx$. 

We note that such assumptions are far from obvious in general. However, in the present setting we already know the solution---albeit temporarily setting
this knowledge aside---from complementary approaches. This allows us to
verify \emph{a posteriori} that these assumptions are natural, in view of the
explicit form \eqref{eqn:def of limiting density}.

\begin{proof}[Proof of Proposition~\ref{Prop_EL conditions}; Euler-Lagrange equality]
We first prove the equality part in \eqref{def of EL conditions}. Let $x \in (a,b).$ 
By differentiating \eqref{def of EL conditions} with respect to $x$, it follows from \eqref{eq:Li2_def} that 
\begin{equation}
\label{eq:pv_relation}
    2\pv \int_a^d \frac{\mu(y)}{x-y}\, dy  + \frac{\log(1-x)}{\lambda x} + \frac{c}{x} = 0, \qquad x \in (a,b), 
\end{equation}
where $\pv$ is the Cauchy principal value. 

We consider the Cauchy transform
\begin{equation} \label{def of g Cauchy}
g(z) := \int_a^d \frac{\mu(y)}{z-y}\, dy
\end{equation}
of the equilibrium measure $\mu$. Under the assumed structure of the support of $\mu(dx)$, the Sokhotski--Plemelj theorem implies that $g(z)$ satisfies the following Riemann--Hilbert problem (RHP).

\begin{rhp}
\label{rhp:little_g}
    The function $g(z)$ satisfies the following conditions: 
    \begin{enumerate}
        \item $g(z)$ is analytic for $z\in \mathbb{C}\setminus(a,d)$;
        \smallskip 
        \item \label{item:recovery_formula} $\displaystyle g_+(x)-g_-(x) = -2\pi i \mu(x)$ for all $x\in(a,b)$;
        \smallskip 
        \item $\displaystyle g_+(x)-g_-(x) = -\frac{2\pi i}{\lambda x}$ for all $x\in(b,d)$;
        \smallskip 
        \item $\displaystyle g_+(x)+g_-(x) %= 2\pv \int_a^d \frac{\mu(y)dy}{x-y} 
        = - \frac{\log(1-x)}{\lambda x}-\frac{c}{x} $ for all $x\in(a,b)$;
        \smallskip 
        \item $\displaystyle g(z) \sim \frac{1}{z} + \frac{ \mathsf{M}_1 }{z^2} + O(z^{-3})$ as $z \to \infty$, where $\mathsf{M}_1:= \int x \mu(x)\,dx$. 
    \end{enumerate}
\end{rhp}
Here, $g_\pm(x) = \lim_{\varepsilon\to0^+} g(x\pm i\varepsilon)$. Here, the last condition follows from 
\begin{align*}
g(z) = \frac{1}{z}\int_a^d  \frac{\mu(y)}{1-y/z}\,dy = \frac{1}{z} \int_a^d \Big( 1+\frac{y}{z} +O(z^{-2}) \Big)\mu(y)\,dy,
\end{align*}
as $z \to \infty.$ At this stage, we do not specify $\mathsf{M}_1$; however, we will later make essential use of the fact that $\mathsf{M}_1=\mathcal{M}_1^{(\mathrm{qL})}$, where $\mathcal{M}_1^{(\mathrm{qL})}$ is given in
\eqref{eq:first_moment}. This is precisely where the alternative approach based on the moment method plays a complementary role within the large deviation framework.

To solve \textbf{RHP~\ref{rhp:little_g}}, we introduce the transformation
\begin{equation}
\varphi(z):= \frac{g(z)}{R(z)}, \qquad    R(z) := (z-a)^{\frac{1}{2}}(z-b)^{\frac{1}{2}},
\end{equation}
where the square roots are taken with canonical branch cuts. 
Note that 
\begin{equation} \label{jump of R pm}
R_+(x) = \begin{cases}
 R_-(x) &\textup{if } x <a \textup{ or } x>b, 
 \smallskip 
 \\
 -R_-(x) &\textup{if } a<x<b. 
\end{cases} 
\end{equation}
The function
$\varphi(z)$ then satisfies the following Riemann--Hilbert problem.

\begin{rhp}
\label{rhp:varphi}
    The function $\varphi(z)$ satisfies the following conditions:
    \begin{enumerate}
        \item $\varphi(z)$ is analytic for $z\in \mathbb{C}\setminus(a,d)$;
        \smallskip 
        \item $\displaystyle \varphi_+(x)-\varphi_-(x) = -\frac{2\pi i}{\lambda xR(x)}$ for all $x\in(b,d)$;
        \smallskip 
        \item $\displaystyle \varphi_+(x)-\varphi_-(x) = - \frac{\log(1-x)}{\lambda xR_+(x)} -\frac{c}{xR_+(x)}$ for all $x\in(a,b)$; 
        \smallskip 
        \item $\displaystyle \varphi(z) \sim \frac{1}{z^2} + \frac{ \mathsf{M}_1 + \frac{1}{2}(a+b)}{z^3} + O(z^{-4})$ as $z \to \infty$. 
    \end{enumerate}
\end{rhp}

Now, by using the second and the third condition in \textbf{RHP \ref{rhp:varphi}} and the Sokhotski–Plemelj formula, we have 
\begin{equation}
\label{eq:sol_phi}
    \varphi(z) =  \frac{1}{2\pi i}  \int_a^b \Big( \frac{\log(1-s)}{\lambda sR_+(s) } + \frac{c}{sR_+(s) } \Big) \,\frac{ds}{z-s} + \int_b^d \frac{1}{\lambda R(s)s } \,\frac{ds}{z-s} . 
\end{equation}
By expanding \eqref{eq:sol_phi} as $z \to \infty$ we obtain the asymptotic behaviour 
\begin{align}
\label{eq:var_phi_asymp}
    \begin{split}
    \varphi(z)& = \bigg( \frac{1}{2\pi i}\int_a^b  \Big( \frac{\log(1-s)}{\lambda sR_+(s) } + \frac{c}{sR_+(s) } \Big)\, ds + \int_b^d \frac{1}{\lambda s R(s)}\,ds \bigg)  \frac{1}{z}
    \\ 
    &\quad +  \bigg( \frac{1}{2\pi i}\int_a^b \Big( \frac{\log(1-s)}{\lambda R_+(s) } + \frac{c}{R_+(s) } \Big)\, ds + \int_b^d \frac{1}{\lambda  R(s)} \,ds \bigg) \frac{1}{z^2}
    \\
    & \quad +  \bigg( \frac{1}{2\pi i}\int_a^b \Big( \frac{s\log(1-s)}{\lambda R_+(s) } + \frac{cs}{R_+(s) } \Big)\, ds + \int_b^d \frac{s}{\lambda  R(s)} \,ds \bigg)\frac{1}{z^3} + O(z^{-4})\,.
    \end{split}
\end{align} 
We now match the coefficients in \eqref{eq:var_phi_asymp} with the final condition in \textbf{RHP~\ref{rhp:varphi}}. For this purpose, the integrals appearing in \eqref{eq:var_phi_asymp} are evaluated separately in Propositions~\ref{Prop_int evaluation 1}, \ref{Prop_int evaluation 2} and \ref{Prop_int evaluation 3} below. As a result, we deduce that the parameters
$a,b,d$ must satisfy the following nonlinear system of equations. 
\begin{align}
&  -\frac{c}{2} - \frac{1}{\lambda } \log\Big( \frac{\sqrt{(1-a)b} + \sqrt{(1-b)a}}{\sqrt{b} + \sqrt{a}}\Big)+ \frac{1}{\lambda}  \log\Big( \frac{\sqrt{b(d-a)} + \sqrt{a(d-b)}}{\sqrt{b(d-a)} - \sqrt{a(d-b)}}\Big) = 0,  \label{eqn for system 1}
\\
&   -\frac{c}{2} - \frac{1}{\lambda} \log\Big(\frac{\sqrt{1-a} + \sqrt{1-b}}{2}\Big)+ \frac{2 }{\lambda }  \log \Big(\frac{\sqrt{d-a}+\sqrt{d-b}}{\sqrt{b-a}}\Big) =  1 ,  \label{eqn for system 2}
\\
\begin{split} 
&  -\Big(c+\frac{1}{\lambda} + 2\Big)\frac{a+b}{4} + \frac{1}{2\lambda}\bigg(1-\sqrt{(1-b)(1-a)} - (a+b) \log\Big(\frac{\sqrt{1-a} + \sqrt{1-b}}{2}\Big)\bigg)  
\\
&\quad +\frac{1}{\lambda} \bigg( \sqrt{(d-a) (d-b)}+(a+b) \log \Big(\frac{\sqrt{d-a}+\sqrt{d-b}}{\sqrt{b-a}}\Big) \bigg) = \mathsf{M}_1. \label{eqn for system 3}
\end{split}
\end{align}
Up to this point, we have not used any a priori information about $\mu(x)$ except its structure of the support. To solve the resulting system of equations, however, we must specify $\mathsf{M}_1=\mathcal{M}_1^{(\mathrm{qL})}$ as given in \eqref{eq:first_moment}. It then follows from lengthy but straightforward computations that the parameters $a,b$, and $d$, defined in \eqref{def of endpts a and b} and \eqref{def of parameter d}, indeed satisfy \eqref{eqn for system 1}, \eqref{eqn for system 2}, and \eqref{eqn for system 3}.

Next, we compute $\mu(x)\, dx$ using the second condition in \textbf{RHP~\ref{rhp:little_g}}. By definition, we have  
\begin{align}
\begin{split}
    \mu(x) &= -\frac{1}{2\pi i}(R_+(x)\varphi_+(x) - R_-(x)\varphi_-(x))
    \\
    &= - \frac{R_+(x)}{2\pi i} ( \varphi_+(x) + \varphi_-(x) ) \mathbbm{1}_{(a,b)}(x) - \frac{R(x)}{2\pi i}(\varphi_+(x)-\varphi_-(x))\mathbbm{1}_{(b,d)}(x)\,.
\end{split}
\end{align} 
We first consider the case $\lambda\leq \lambda_c$. Recall from \eqref{def of parameter d} that, in the present case, we have $d=b$.
By evaluating the integrals in \eqref{eq:sol_phi} using Propositions~\ref{Prop_int evaluation 1} and~\ref{Prop_int evaluation 3}, we obtain that, for $z \notin (a,b)$, 
\begin{align*}
    \varphi(z) &= -\frac{c}{2z\sqrt{ab}} - \frac{c}{2zR(z)} -\frac{\log(1-z)}{2\lambda zR(z)} +\frac{1}{2z\lambda R(z)}\log\bigg( \frac{ \sqrt{z-b}+ \sqrt{ z-a} }{ \sqrt{z-b}-\sqrt{ z-a} }\bigg)
    \\
    &\quad -\frac{1}{2z\lambda R(z)}\log\bigg( \frac{ \sqrt{(1-a)(z-b)} + \sqrt{ (1-b)(z-a)} }{  \sqrt{(1-a)(z-b)}-\sqrt{(1-b)(z-a)} }\bigg)  - \frac{1}{z\lambda \sqrt{ab}} \log\bigg( \frac{\sqrt{b(1-a)} +\sqrt{a(1-b)}}{\sqrt{b} +\sqrt{a} }\bigg)\,.
\end{align*} 
Then, by taking the plus and minus boundary values of this expression,
and using both \eqref{eqn for system 1} and the identity~\cite[Eq.~(4.23.26)]{NIST} 
\begin{equation}
    \arctan(z) = \frac{i}{2}\log \Big(\frac{i+z}{i-z}\Big), \qquad iz\not\in (-\infty,-1)\cup(1,+\infty)\,,
\end{equation}
we deduce that for $x \in (a,b)$,  
\begin{equation}
\begin{split}  
\varphi_+(x)+\varphi_-(x) =-\frac{2i}{x\lambda R_+(x)}\bigg(\arctan\bigg(\sqrt{\frac{1-a}{1-b}}\sqrt{\frac{b-x}{x-a}}\bigg) - \arctan\bigg(\sqrt{\frac{b-x}{x-a}}\bigg)\bigg)\,.
\end{split}
\end{equation}
Therefore, we obtain that for $x \in (a,b)$, 
\begin{align*}
  \mu(x) &= \frac{1}{x\lambda\pi}\bigg(\arctan\bigg(\sqrt{\frac{1-a}{1-b}}\sqrt{\frac{b-x}{x-a}}\bigg) - \arctan\bigg(\sqrt{\frac{b-x}{x-a}}\bigg)\bigg) 
       \\
       & =\frac{1}{x\lambda\pi}\arctan\bigg(\frac{(\sqrt{1-a}-\sqrt{1-b})\sqrt{(b-x)(x-a)}}{\sqrt{1-b}(x-a) + \sqrt{1-a}(b-x)}\bigg)
       \\
       & =\frac{2}{x\lambda\pi} \arctan \bigg( \frac{(\sqrt{1-a}-\sqrt{1-b})\sqrt{(b-x)(x-a)}}{\sqrt{1-b}(x-a) + \sqrt{1-a}(b-x) + (b-a)\sqrt{1-x}} \bigg), 
\end{align*} 
where we have used the addition formula and the double-angle formula
for the arctangent function; see e.g. \cite[Eq.~(4.24.15)]{NIST}. This shows that for $\lambda \le \lambda_c$, the density of $\mu(x)$ is given by \eqref{eqn:def of limiting density}. 
 
We now consider the case $\lambda > \lambda_c$.
It is straightforward to see that $\mu(x) = 1/(\lambda x)$ for $x \in (b,1)$.
On the other hand, for $x \in (a,b)$, by calculations analogous to those above, one can show that
\begin{align*}
 \mu(x) &= \frac{1}{x\lambda\pi}\bigg(\arctan\bigg(\sqrt{\frac{1-a}{1-b}}\sqrt{\frac{b-x}{x-a}}\bigg) - \arctan\bigg(\sqrt{\frac{b-x}{x-a}}\bigg)+2\arctan\bigg(\sqrt{\frac{(x-a)(1-b)}{(b-x)(1-a)}}\bigg)\bigg)  
       \\ 
       & = \frac{2}{x\lambda\pi} \bigg( \arctan \bigg( \frac{(\sqrt{1-a}-\sqrt{1-b})\sqrt{(b-x)(x-a)}}{\sqrt{1-b}(x-a) + \sqrt{1-a}(b-x) + (b-a)\sqrt{1-x}} \bigg) + \arctan\bigg( \sqrt{\frac{(1-b)(x-a)}{(1-a)(b-x)}}\bigg) \bigg)  
       \\
       &=  \frac{2}{x\lambda\pi}  \arctan \bigg(  \sqrt{ \frac{x-a }{b-x} } \frac{\sqrt{1-x}+\sqrt{1-b} }{ \sqrt{1-x}+\sqrt{1-a} }  \bigg) . 
\end{align*} 
This again shows that for $\lambda >  \lambda_c$, the density of $\mu(x)$ is given by \eqref{eqn:def of limiting density}. 

We have shown that, for $x \in (a,b)$, the derivative of the LHS of \eqref{def of EL conditions} with respect to $x$ vanishes. Therefore, the equality part of \eqref{def of EL conditions} follows.
\end{proof}

Next we show the inequalities in \eqref{def of EL conditions}.

\begin{proof}[Proof of Proposition~\ref{Prop_EL conditions}; Euler-Lagrange inequalities]

Let 
\begin{equation}
\label{eq:h_func}
    h(x) = g_+(x) + g_-(x) + \frac{c}{x} + \frac{\log(1-x)}{\lambda x},
\end{equation}
where $g(x)$ is given by \eqref{def of g Cauchy}. Note that, by \textbf{RHP~\ref{rhp:little_g}}, we have $h(x)=0$ for $x \in (a,b)$.

We first consider the case $\lambda > \lambda_c$; thus $d=1$.  
We claim that 
\begin{equation} \label{h sign lambda big}
h(x) \begin{cases}
>0 &\textup{if } x \in (0,a),
\smallskip 
\\
<0  &\textup{if } x \in (b,1).  
\end{cases}
\end{equation}
Then the desired conclusion follows by integration. 
 
By residue calculations analogous to those above and \eqref{eqn for system 2}, it follows that, for $x \in (0,a)\cup(b,1)$, 
\begin{equation}
  h(x) = -R(x)\bigg(\frac{c}{x\sqrt{ab}} +\frac{1}{\lambda}\int_1^\infty \frac{1}{sR(s)(s-x)}\, ds\bigg)= -\frac{R(x)}{x}\Big(\frac{c}{\sqrt{ab}} -\frac{J(0)}{\lambda} +\frac{  J(x) }{\lambda }  \Big), 
\end{equation}
where 
\begin{equation} \label{def of J}
J(x) := \int_1^\infty \frac{1}{(s-x)R(s)}\, ds\,.
\end{equation}
By standard integration techniques, such as Euler substitutions, one can evaluate $J(x)$ as 
\begin{equation}  \label{integral eval of J}
    J(x) = 
    \begin{cases}
    \displaystyle  
    \frac{2 }{\sqrt{(a-x)(b-x)}} \log\bigg( \frac{(\sqrt{b-x} + \sqrt{a-x})\sqrt{1-x} }{\sqrt{b-x}\sqrt{1-a} + \sqrt{a-x}\sqrt{1-b}}\bigg) &\textup{if } 0 \le x < a, 
    \smallskip
    \\
    \displaystyle 
    -\frac{ 2 }{\sqrt{(x-a)(x-b)}} \log\bigg( \frac{(\sqrt{x-b} + \sqrt{x-a})\sqrt{1-x} }{\sqrt{x-b}\sqrt{1-a} + \sqrt{x-a}\sqrt{1-b}}\bigg) &\textup{if } b<x<1. 
    \end{cases} 
\end{equation} 

Therefore, $J(x)>0$ for $x\in(0,a)$, whereas $J(x)<0$ for $x \in (b,1)$. Furthermore, by \eqref{eqn for system 1} we have $\frac{c}{ \sqrt{ab} }- \frac{J(0)}{\lambda}=0$.  Therefore we conclude that \eqref{h sign lambda big} holds. 

Next, we consider the case $\lambda \le \lambda_c$, thus $d=b$. In this case, it suffices to show that  
\begin{equation} \label{h sign lambda small}
h(x)>0 \quad \textup{if } x \in (0,a)\cup (b,1). 
\end{equation} 
We notice that in this case, one can compute $h(x)$ as 
\begin{equation}
    h(x) = -\frac{R(x)}{x }\left(\frac{c}{ \sqrt{ab} }-\frac{J(0)}{\lambda} - \frac{2I(0)}{\lambda} + \frac{J(x)}{\lambda} + \frac{2I(x)}{\lambda}\right)\,,
    \end{equation}
where $J(x)$ is given by \eqref{def of J} and  
\begin{equation}
I(x):= \int_b^1 \frac{1}{(s-x)R(s)} \, ds. 
\end{equation}
Similar to the above, $I(x)$ is evaluated as 
\begin{equation}  \label{integral eval of I}
 I(x) =  \begin{cases}
   \displaystyle     \frac{1}{\sqrt{(b-x)(a-x)}}\log\bigg(\frac{\sqrt{b-x}\sqrt{1-a} + \sqrt{a-x}\sqrt{1-b} }{\sqrt{b-x}\sqrt{1-a} - \sqrt{a-x}\sqrt{1-b}}\bigg)  & \textup{if } 0<x<a, 
        \smallskip 
        \\
     \displaystyle      -\frac{1}{\sqrt{(x-b)(x-a)}}\log\bigg(\frac{\sqrt{x-b}\sqrt{1-a} + \sqrt{x-a}\sqrt{1-b} }{ \sqrt{x-a}\sqrt{1-b} - \sqrt{x-b}\sqrt{1-a}}\bigg) & \textup{if } b<x<1. 
    \end{cases}
\end{equation} 
Notice that by \eqref{eqn for system 1}, we have $c-\frac{J(0)}{\lambda} - \frac{2I(0)}{\lambda}=0$. Moreover, since $I(x)>0$ for $x \in (0,a) \cup (b,1)$, we obtain \eqref{h sign lambda small}. This completes the proof. 
\end{proof}

\begin{proof}[Proof of Theorem~\ref{Thm:limiting density in growth regime} (B)] The first assertion follows from the general theory developed in \cite[Theorem~1.1]{HMO25} and \cite[Theorem~1.7]{DD22}, together with Lemma~\ref{Lem_asymp of weight}. In particular, by considering the special case $\eta=\theta=1$ and $\beta=2$ in \cite[Theorem~1.1]{HMO25} and then applying Varadhan’s lemma~\cite{LDP_book}, the claim follows.
The second assertion follows directly from Proposition~\ref{Prop_EL conditions} and the fact that the Euler--Lagrange variational condition \eqref{def of EL conditions} characterises the unique minimiser of the energy functional.
\end{proof}

\subsection{Proof of Theorem~\ref{Thm:limiting density in growth regime} (C)} \label{Subsec_Thm main (C)}

In this subsection, we study the limiting zero distribution of the little $q$-Laguerre polynomials as a third approach to the $q$-deformed Marchenko-Pastur law \eqref{eqn:def of limiting density}, and thereby establish Theorem~\ref{Thm:limiting density in growth regime} (C).
As mentioned earlier, our analysis follows the general method introduced by Kuijlaars and Van Assche~\cite{KV99}; see also the earlier work~\cite{VK91}. 
To apply this method, we consider the double-indexed \emph{normalised} little $q$-Laguerre polynomials $P_{n,N}(x)$ defined by
\begin{equation}
    P_{n,N}(x)=-\frac{1}{\sqrt{h_{n}}}p_{n}(x|e^{-\frac{\lambda (cN+d)}{N}};e^{-\frac{\lambda}{N}}).
\end{equation}
Here, $h_n$ is given by \eqref{eqn:orthogonality of little q Laguerre}, and we use the scalings $q=e^{-\lambda/N}$ and $\alpha=cN+d$.
Then, by the three-term recurrence relation \eqref{eqn:three term of qlaguerre} for $p_n$, we have
\begin{equation}
    xP_{n,N}(x)=a_{n+1,N}P_{n+1,N}(x)+b_{n,N}P_{n,N}(x)+a_{n,N}P_{n-1,N}(x), 
\end{equation}
where the recurrence coefficients are given by
\begin{equation}
\begin{split}
    a_{n,N}&=e^{-\frac{\lambda n}{N}}\sqrt{e^{-\frac{\lambda(cN+(d-1))}{N}}(1-e^{-\frac{\lambda n}{N}})(1-e^{-\frac{\lambda(n+cN+d)}{N}})}, 
    \\
    b_{n,N}&=e^{-\frac{\lambda n}{N}}(1-e^{-\frac{\lambda(n+cN+d+1)}{N}})+e^{-\frac{\lambda(n+cN+d)}{N}}(1-e^{-\frac{\lambda n}{N}}).
\end{split}
\end{equation}
For $t>0$, we write $\lim_{n/N\to t} X_{n,N} = X$ if $\lim_{j\to\infty} X_{n_j,N_j} = X$ for any sequences $\{n_j\}$ and $\{N_j\}$ such that $N_j \to \infty$ and $n_j/N_j \to t$ as $j \to \infty$.
Note that by definition, we have 
\begin{equation}
\begin{split}
    A(t):=\lim_{n/N\rightarrow t}a_{n,N}
    &=e^{-\lambda t}\sqrt{e^{-c\lambda}(1-e^{-\lambda t})(1-e^{-(c+t)\lambda})},\\
    B(t):=\lim_{n/N\rightarrow t}b_{n,N}
    &=e^{-\lambda t}(1-e^{-(c+t)\lambda})+e^{-(c+t)\lambda}(1-e^{-\lambda t}).
\end{split}
\end{equation}

Recall that for $\mathfrak{a}<\mathfrak{b}$, the arcsine distribution $\omega_{[\mathfrak{a},\mathfrak{b}]}$ is a measure defined by 
\begin{equation}
    \frac{d\omega_{[\mathfrak{a},\mathfrak{b}]}(x)}{dx}=
    \begin{cases}
        \displaystyle\frac{1}{\pi\sqrt{(\mathfrak{b}-x)(x-\mathfrak{a}})},&\text{if }x\in(\mathfrak{a},\mathfrak{b}),\\
        0,&\text{otherwise.}
    \end{cases}
\end{equation}
Applying \cite[Theorem~1.4]{KV99}, we deduce that the empirical zero distribution $\nu_{n,N}(dx)$ of the little $q$-Laguerre polynomials converges to
\begin{equation}
\nu:=\lim_{n/N\rightarrow1}\nu_{n,N}=\int_{0}^{1}\omega_{[B(s)-2A(s),B(s)+2A(s)]}\,ds.
\end{equation}
We also note that by \cite[Eq.~(1.10)]{KV99}, the support of the limiting zero distribution $\nu$ is given by
\begin{equation}
    I_\lambda:=\Big[\inf_{0<s<1}(B(s)-2A(s)),\sup_{0<s<1}(B(s)+2A(s))\Big].
\end{equation}
Therefore, it follows that the density function of $\nu$ is given by
\begin{equation}
    \frac{d\nu(x)}{dx}=\begin{cases}
        \displaystyle\frac{1}{\pi}\int_{t_{-}(x)}^{\min\{1,t_{+}(x)\}}\frac{ds}{\sqrt{(B(s)+2A(s)-x)(x-B(s)+2A(s))}} , &\text{if } x\in I_{\lambda},  
        \smallskip 
        \\
        0,&\text{otherwise,}
    \end{cases}
\end{equation}
where $t_{\pm}(x)$ denote the endpoints of the interval
\begin{equation}
    \big\{s>0:B(s)-2A(s)\leq x\leq B(s)+2A(s)\big\}.
\end{equation}
Since this expression coincides with the integral representation of the limiting spectral density \eqref{eqn:integral representation of rho} under the change of variables $t = e^{-\lambda s}$, this completes the proof of Theorem~\ref{Thm:limiting density in growth regime} (C).  \qed

\appendix
\section{Integral evaluation}
\label{app:integrals}

In this appendix, we provide explicit computations of the integrals that appear in the coefficients of the asymptotic expansion~\eqref{eq:var_phi_asymp} for $\varphi(z)$. Recall that for $0<a<b \le 1$, 
$$
R(z)=(z-a)^\frac{1}{2}(z-b)^\frac{1}{2}, \qquad R_+(z) = \lim_{\varepsilon\to0^+} R(z+i\varepsilon). 
$$

\begin{prop} \label{Prop_int evaluation 1}
Let $0<a < b \le 1$. Then we have 
\begin{equation}
\label{eq:main_part_prop1}
   \frac{1}{2\pi i} \int_a^b \frac{1}{s(z-s)R_+(s)} \, ds =  -\frac{1}{2z\sqrt{ab}} - \frac{1}{2zR(z)} \qquad \textup{for } z \not\in[a,b]\,,
\end{equation}
and 
    \begin{equation}
    \label{eq:expansion_prop1}
        \frac{1}{2\pi i}\int_a^b \frac{s^j}{R_+(s)} \, ds = \begin{cases}
            -\frac{1}{2\sqrt{ab}} &\textup{if } j=-1, 
            \smallskip 
            \\
            -\frac{1}{2} &\textup{if } j=0, 
            \smallskip 
            \\
    -\frac{1}{4} - \frac{a+b}{4}  &\textup{if } j=1.  
        \end{cases}
    \end{equation} 
\end{prop}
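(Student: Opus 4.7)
The argument is a standard residue computation exploiting the jump relation $R_+(s)=-R_-(s)$ on $(a,b)$. For any function $f$ holomorphic on a neighbourhood of $[a,b]$, this relation gives
\[
\int_a^b \frac{f(s)}{R_+(s)}\,ds \;=\; -\frac{1}{2}\oint_C \frac{f(s)}{R(s)}\,ds,
\]
with $C$ a small counterclockwise loop encircling $[a,b]$. The plan is then to deform $C$ outward to a large circle $|s|=\Lambda$, collect the residues of $f/R$ at the finite poles of $f$ lying outside $[a,b]$, and use $\oint_{|s|=\Lambda} F(s)\,ds = -2\pi i\,\Res_{s=\infty} F$ as $\Lambda\to\infty$. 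This produces the master identity
\[
\frac{1}{2\pi i}\int_a^b \frac{f(s)}{R_+(s)}\,ds \;=\; \frac{1}{2}\Res_{s=\infty}\frac{f(s)}{R(s)} \;+\;\frac{1}{2}\!\!\sum_{\substack{s_0\text{ pole of }f\\s_0\notin[a,b]}}\!\!\Res_{s=s_0}\frac{f(s)}{R(s)}.
\]

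To prove \eqref{eq:main_part_prop1} I would apply this master formula with $f(s)=1/(s(z-s))$. Its only poles outside $[a,b]$ are $s=0$ and $s=z$, whose residues in $f/R$ are $1/(zR(0))$ and $-1/(zR(z))$ respectively; at infinity $f(s)/R(s)=O(s^{-3})$, so no residue contribution arises there. The one non-trivial input is the evaluation $R(0)=(-a)^{1/2}(-b)^{1/2}=-\sqrt{ab}$, dictated by the chosen branch of $R$, which is precisely what produces the $-1/(2z\sqrt{ab})$ term in the claimed identity.

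For \eqref{eq:expansion_prop1} the cleanest route is to expand both sides of \eqref{eq:main_part_prop1} as Laurent series in $1/z$ at infinity: on the left, $1/(z-s)=\sum_{n\ge 0}s^n/z^{n+1}$ identifies $\tfrac{1}{2\pi i}\int_a^b s^j/R_+(s)\,ds$ with the coefficient of $z^{-(j+2)}$; on the right, the expansion $1/R(z)=1/z+(a+b)/(2z^2)+\cdots$ then reads off the stated values for $j=-1,0,1$ (and, if desired, all higher $j$). Equivalently, one can apply the master identity directly with $f(s)=s^j$, in which case only the residue at infinity contributes and the coefficient to extract is the $s^{-1}$ coefficient of $s^j/R(s)$. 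The only real subtlety in the entire argument is fixing the branch of $R$ consistently so that $R(0)$ carries the correct sign; with the branch pinned down, everything else is a routine residue computation with no genuine obstacle.
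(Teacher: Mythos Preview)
Your argument is correct and is essentially the paper's own proof: both rewrite the interval integral as a contour integral around $[a,b]$ via the jump relation $R_+=-R_-$, deform outward to pick up the residues at $s=0$ and $s=z$ (with the branch determination $R(0)=-\sqrt{ab}$ being the only delicate point, which you identify correctly), and then deduce \eqref{eq:expansion_prop1} by matching coefficients in the large-$z$ expansion of \eqref{eq:main_part_prop1}. One small caveat worth flagging: if you actually carry out that expansion for $j=1$ you obtain $-(a+b)/4$, not the printed $-\tfrac14-\tfrac{a+b}{4}$; this appears to be a typo in the stated formula (the paper's own subsequent use of this integral in deriving \eqref{eqn for system 3} is consistent with $-(a+b)/4$), not a flaw in your method or the paper's.
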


\begin{proof}
We observe that~\eqref{eq:main_part_prop1} implies~\eqref{eq:expansion_prop1}, since the three integrals correspond to the coefficients of $z^{-1}$, $z^{-2}$, and $z^{-3}$ in the large-$z$ expansion of \eqref{eq:main_part_prop1}. 

Let $\gamma$ denote the contour depicted in Figure~\ref{fig:deformation} (A). Deforming $\gamma$ into a contour $\gamma'$ that lies close to and encircles the interval $[a,b]$, we obtain
\begin{align*}
\int_\gamma \frac{1}{s(z-s)R(s)} \, ds = \int_{\gamma'}
\frac{1}{s(z-s)R(s)} \, ds = \int_{ a }^b \frac{1}{s(z-s)} \Big( \frac{1}{R_+(s)}-\frac{1}{R_-(s)}\Big) \,ds  = 2\int_a^b \frac{1}{s(z-s)R_+(s)} \, ds, 
\end{align*}
where we have used \eqref{jump of R pm}. 
Then it follows that 
\begin{equation}
    \frac{1}{2\pi i}\int_a^b \frac{1}{s(z-s)R_+(s)} \, ds = \frac{1}{4\pi i}\int_\gamma \frac{1}{s(z-s)R(s)} \, ds\,.
\end{equation}
Then \eqref{eq:main_part_prop1} follows from the residue calculus. 
\end{proof}

\begin{figure}[t]
	\begin{subfigure}{0.48\textwidth}
	\begin{center}

\tikzset{every picture/.style={line width=0.75pt}}  

\begin{tikzpicture}[scale=0.7,x=0.75pt,y=0.75pt,yscale=-1,xscale=1]

\draw    (280.2,210.8) -- (459,210) ;
\draw [shift={(459,210)}, rotate = 359.74] [color={rgb, 255:red, 0; green, 0; blue, 0 }  ][fill={rgb, 255:red, 0; green, 0; blue, 0 }  ][line width=0.75]      (0, 0) circle [x radius= 3.35, y radius= 3.35]   ;
\draw [shift={(280.2,210.8)}, rotate = 359.74] [color={rgb, 255:red, 0; green, 0; blue, 0 }  ][fill={rgb, 255:red, 0; green, 0; blue, 0 }  ][line width=0.75]      (0, 0) circle [x radius= 3.35, y radius= 3.35]   ;

\draw  [fill={rgb, 255:red, 0; green, 0; blue, 0 }  ,fill opacity=1 ] (216.5,210.79) .. controls (216.5,208.84) and (218.09,207.25) .. (220.04,207.25) .. controls (222,207.25) and (223.58,208.84) .. (223.58,210.79) .. controls (223.58,212.75) and (222,214.33) .. (220.04,214.33) .. controls (218.09,214.33) and (216.5,212.75) .. (216.5,210.79) -- cycle ;

\draw  [fill={rgb, 255:red, 0; green, 0; blue, 0 }  ,fill opacity=1 ] (290,118.29) .. controls (290,116.34) and (291.59,114.75) .. (293.54,114.75) .. controls (295.5,114.75) and (297.08,116.34) .. (297.08,118.29) .. controls (297.08,120.25) and (295.5,121.83) .. (293.54,121.83) .. controls (291.59,121.83) and (290,120.25) .. (290,118.29) -- cycle ;

\draw  [color={rgb, 255:red, 65; green, 117; blue, 5 }  ,draw opacity=1 ][line width=0.75]  (271.58,121.83) .. controls (271.58,109.15) and (281.86,98.87) .. (294.54,98.87) .. controls (307.22,98.87) and (317.5,109.15) .. (317.5,121.83) .. controls (317.5,134.51) and (307.22,144.79) .. (294.54,144.79) .. controls (281.86,144.79) and (271.58,134.51) .. (271.58,121.83) -- cycle ;

\draw  [color={rgb, 255:red, 65; green, 117; blue, 5 }  ,draw opacity=1 ][line width=0.75]  (197.08,210.79) .. controls (197.08,198.11) and (207.36,187.83) .. (220.04,187.83) .. controls (232.72,187.83) and (243,198.11) .. (243,210.79) .. controls (243,223.47) and (232.72,233.75) .. (220.04,233.75) .. controls (207.36,233.75) and (197.08,223.47) .. (197.08,210.79) -- cycle ;

\draw  [draw opacity=0][line width=0.75]  (510.8,252.26) .. controls (492.24,323.56) and (427.37,376.2) .. (350.2,376.2) .. controls (258.56,376.2) and (184.27,301.97) .. (184.27,210.4) .. controls (184.27,118.83) and (258.56,44.6) .. (350.2,44.6) .. controls (441.84,44.6) and (516.13,118.83) .. (516.13,210.4) .. controls (516.13,224.75) and (514.31,238.67) .. (510.88,251.95) -- (350.2,210.4) -- cycle ; \draw  [color={rgb, 255:red, 65; green, 117; blue, 5 }  ,draw opacity=1 ][line width=0.75]  (510.8,252.26) .. controls (492.24,323.56) and (427.37,376.2) .. (350.2,376.2) .. controls (258.56,376.2) and (184.27,301.97) .. (184.27,210.4) .. controls (184.27,118.83) and (258.56,44.6) .. (350.2,44.6) .. controls (441.84,44.6) and (516.13,118.83) .. (516.13,210.4) .. controls (516.13,224.75) and (514.31,238.67) .. (510.88,251.95) ;  
\draw  [color={rgb, 255:red, 65; green, 117; blue, 5 }  ,draw opacity=1 ][fill={rgb, 255:red, 65; green, 117; blue, 5 }  ,fill opacity=1 ] (179.07,213.29) -- (184.32,203.22) -- (189.4,213.38) ;
\draw  [color={rgb, 255:red, 65; green, 117; blue, 5 }  ,draw opacity=1 ][fill={rgb, 255:red, 65; green, 117; blue, 5 }  ,fill opacity=1 ] (512.08,153.97) -- (509.96,165.13) -- (502.16,156.88) ;
\draw  [color={rgb, 255:red, 65; green, 117; blue, 5 }  ,draw opacity=1 ][fill={rgb, 255:red, 65; green, 117; blue, 5 }  ,fill opacity=1 ] (223.76,192.99) -- (213.24,188.71) -- (222.87,182.7) ;
\draw  [color={rgb, 255:red, 65; green, 117; blue, 5 }  ,draw opacity=1 ][fill={rgb, 255:red, 65; green, 117; blue, 5 }  ,fill opacity=1 ] (299.21,104.23) -- (289.22,98.83) -- (299.45,93.9) ;
 
\draw (278.2,214.2) node [anchor=north east] [inner sep=0.75pt]    {$a$};
 
\draw (461,213.4) node [anchor=north west][inner sep=0.75pt]    {$b$};
 
\draw (222.04,214.19) node [anchor=north west][inner sep=0.75pt]    {$0$};
 
\draw (295.54,121.69) node [anchor=north west][inner sep=0.75pt]    {$z$};
 
\draw (312.67,47.4) node [anchor=north west][inner sep=0.75pt]    {$\gamma $};
\end{tikzpicture}
	\end{center}
	\subcaption{ Deformed contour in Proposition~\ref{Prop_int evaluation 1} }
\end{subfigure}	
	\begin{subfigure}{0.48\textwidth}
	\begin{center}	
\tikzset{every picture/.style={line width=0.75pt}} 

\begin{tikzpicture}[scale=0.7, x=0.75pt,y=0.75pt,yscale=-1,xscale=1]
  
\draw    (260.2,190.8) -- (330.2,190.4) ;
\draw [shift={(330.2,190.4)}, rotate = 359.67] [color={rgb, 255:red, 0; green, 0; blue, 0 }  ][fill={rgb, 255:red, 0; green, 0; blue, 0 }  ][line width=0.75]      (0, 0) circle [x radius= 3.35, y radius= 3.35]   ;
\draw [shift={(260.2,190.8)}, rotate = 359.67] [color={rgb, 255:red, 0; green, 0; blue, 0 }  ][fill={rgb, 255:red, 0; green, 0; blue, 0 }  ][line width=0.75]      (0, 0) circle [x radius= 3.35, y radius= 3.35]   ;
 
\draw  [fill={rgb, 255:red, 0; green, 0; blue, 0 }  ,fill opacity=1 ] (196.5,190.79) .. controls (196.5,188.84) and (198.09,187.25) .. (200.04,187.25) .. controls (202,187.25) and (203.58,188.84) .. (203.58,190.79) .. controls (203.58,192.75) and (202,194.33) .. (200.04,194.33) .. controls (198.09,194.33) and (196.5,192.75) .. (196.5,190.79) -- cycle ;
 
\draw    (496.2,190.4) -- (390.71,190) ;
\draw [shift={(390.71,190)}, rotate = 180.22] [color={rgb, 255:red, 0; green, 0; blue, 0 }  ][fill={rgb, 255:red, 0; green, 0; blue, 0 }  ][line width=0.75]      (0, 0) circle [x radius= 3.35, y radius= 3.35]   ;
\draw [shift={(496.2,190.4)}, rotate = 180.22] [color={rgb, 255:red, 0; green, 0; blue, 0 }  ][fill={rgb, 255:red, 0; green, 0; blue, 0 }  ][line width=0.75]      (0, 0) circle [x radius= 3.35, y radius= 3.35]   ;
 
\draw  [dash pattern={on 4.5pt off 4.5pt}]  (330.2,190.4) -- (427,56.25) ;
 
\draw  [fill={rgb, 255:red, 0; green, 0; blue, 0 }  ,fill opacity=1 ] (270,98.29) .. controls (270,96.34) and (271.59,94.75) .. (273.54,94.75) .. controls (275.5,94.75) and (277.08,96.34) .. (277.08,98.29) .. controls (277.08,100.25) and (275.5,101.83) .. (273.54,101.83) .. controls (271.59,101.83) and (270,100.25) .. (270,98.29) -- cycle ;
 
\draw  [color={rgb, 255:red, 65; green, 117; blue, 5 }  ,draw opacity=1 ][line width=0.75]  (250.58,101.83) .. controls (250.58,89.15) and (260.86,78.87) .. (273.54,78.87) .. controls (286.22,78.87) and (296.5,89.15) .. (296.5,101.83) .. controls (296.5,114.51) and (286.22,124.79) .. (273.54,124.79) .. controls (260.86,124.79) and (250.58,114.51) .. (250.58,101.83) -- cycle ;
 
\draw  [color={rgb, 255:red, 65; green, 117; blue, 5 }  ,draw opacity=1 ][line width=0.75]  (177.08,190.79) .. controls (177.08,178.11) and (187.36,167.83) .. (200.04,167.83) .. controls (212.72,167.83) and (223,178.11) .. (223,190.79) .. controls (223,203.47) and (212.72,213.75) .. (200.04,213.75) .. controls (187.36,213.75) and (177.08,203.47) .. (177.08,190.79) -- cycle ;
 
\draw [color={rgb, 255:red, 65; green, 117; blue, 5 }  ,draw opacity=1 ][line width=0.75]    (494.98,169.98) -- (389.24,170.44) ;
 
\draw [color={rgb, 255:red, 65; green, 117; blue, 5 }  ,draw opacity=1 ][line width=0.75]    (494.96,210.57) -- (389.24,210.67) ;
 
\draw  [draw opacity=0][line width=0.75]  (389.24,210.67) .. controls (389.22,210.67) and (389.19,210.67) .. (389.17,210.67) .. controls (380.48,210.67) and (373.44,201.66) .. (373.44,190.56) .. controls (373.44,179.45) and (380.48,170.44) .. (389.17,170.44) .. controls (389.19,170.44) and (389.22,170.44) .. (389.24,170.44) -- (389.17,190.56) -- cycle ; \draw  [color={rgb, 255:red, 65; green, 117; blue, 5 }  ,draw opacity=1 ][line width=0.75]  (389.24,210.67) .. controls (389.22,210.67) and (389.19,210.67) .. (389.17,210.67) .. controls (380.48,210.67) and (373.44,201.66) .. (373.44,190.56) .. controls (373.44,179.45) and (380.48,170.44) .. (389.17,170.44) .. controls (389.19,170.44) and (389.22,170.44) .. (389.24,170.44) ;  
 
\draw  [draw opacity=0][line width=0.75]  (494.96,210.57) .. controls (484.91,292.53) and (415.01,356) .. (330.27,356) .. controls (238.62,356) and (164.33,281.77) .. (164.33,190.2) .. controls (164.33,98.63) and (238.62,24.4) .. (330.27,24.4) .. controls (415.06,24.4) and (484.99,87.95) .. (494.98,169.98) -- (330.27,190.2) -- cycle ; \draw  [color={rgb, 255:red, 65; green, 117; blue, 5 }  ,draw opacity=1 ][line width=0.75]  (494.96,210.57) .. controls (484.91,292.53) and (415.01,356) .. (330.27,356) .. controls (238.62,356) and (164.33,281.77) .. (164.33,190.2) .. controls (164.33,98.63) and (238.62,24.4) .. (330.27,24.4) .. controls (415.06,24.4) and (484.99,87.95) .. (494.98,169.98) ;  

\draw  [color={rgb, 255:red, 65; green, 117; blue, 5 }  ,draw opacity=1 ][fill={rgb, 255:red, 65; green, 117; blue, 5 }  ,fill opacity=1 ] (159.07,193.29) -- (164.32,183.22) -- (169.4,193.38) ;
\draw  [color={rgb, 255:red, 65; green, 117; blue, 5 }  ,draw opacity=1 ][fill={rgb, 255:red, 65; green, 117; blue, 5 }  ,fill opacity=1 ] (325.88,19.79) -- (336,24.93) -- (325.9,30.12) ;
\draw  [color={rgb, 255:red, 65; green, 117; blue, 5 }  ,draw opacity=1 ][fill={rgb, 255:red, 65; green, 117; blue, 5 }  ,fill opacity=1 ] (492.08,133.97) -- (489.96,145.13) -- (482.16,136.88) ;
\draw  [color={rgb, 255:red, 65; green, 117; blue, 5 }  ,draw opacity=1 ][fill={rgb, 255:red, 65; green, 117; blue, 5 }  ,fill opacity=1 ] (438.59,175.45) -- (428.49,170.27) -- (438.6,165.12) ;
\draw  [color={rgb, 255:red, 65; green, 117; blue, 5 }  ,draw opacity=1 ][fill={rgb, 255:red, 65; green, 117; blue, 5 }  ,fill opacity=1 ] (418.64,204.63) -- (428.6,210.09) -- (418.34,214.96) ;
\draw  [color={rgb, 255:red, 65; green, 117; blue, 5 }  ,draw opacity=1 ][fill={rgb, 255:red, 65; green, 117; blue, 5 }  ,fill opacity=1 ] (330.2,360.91) -- (320.22,355.48) -- (330.46,350.58) ;
\draw  [color={rgb, 255:red, 65; green, 117; blue, 5 }  ,draw opacity=1 ][fill={rgb, 255:red, 65; green, 117; blue, 5 }  ,fill opacity=1 ] (203.76,172.99) -- (193.24,168.71) -- (202.87,162.7) ;
\draw  [color={rgb, 255:red, 65; green, 117; blue, 5 }  ,draw opacity=1 ][fill={rgb, 255:red, 65; green, 117; blue, 5 }  ,fill opacity=1 ] (279.21,84.23) -- (269.22,78.83) -- (279.45,73.9) ;
 
\draw (258.2,194.2) node [anchor=north east] [inner sep=0.75pt]    {$a$};
 
\draw (332.2,193.8) node [anchor=north west][inner sep=0.75pt]    {$b$};
 
\draw (202.04,194.19) node [anchor=north west][inner sep=0.75pt]    {$0$};
 
\draw (375.5,73.4) node [anchor=north west][inner sep=0.75pt]    {$R$};
 
\draw (392.71,193.4) node [anchor=north west][inner sep=0.75pt]    {$1$};
 
\draw (498.2,193.8) node [anchor=north west][inner sep=0.75pt]    {$R$};
 
\draw (275.54,101.69) node [anchor=north west][inner sep=0.75pt]    {$z$};
 
\draw (292.67,27.4) node [anchor=north west][inner sep=0.75pt]    {$\gamma $};

\end{tikzpicture}
	\end{center}
	\subcaption{Deformed contour in Proposition~\ref{Prop_int evaluation 3}}
\end{subfigure}		
	\caption{Contour deformations in Propositions~\ref{Prop_int evaluation 1} and~\ref{Prop_int evaluation 3}. }   \label{fig:deformation}
\end{figure}
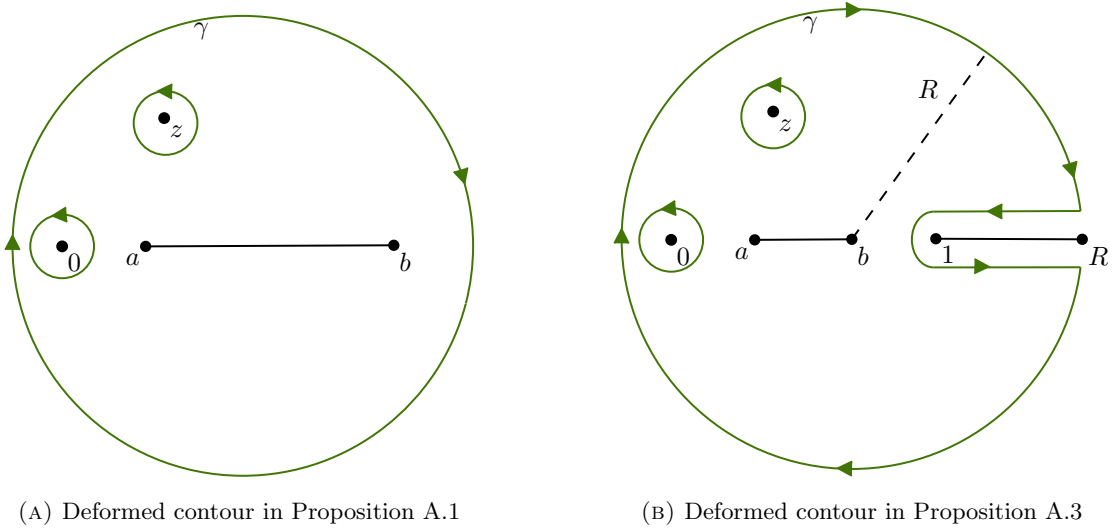

    \begin{prop} \label{Prop_int evaluation 2}
    Let $0<a<b \le d \le 1$. Then we have 
    \begin{equation}
       \int_b^d \frac{s^j}{R(s)}\, ds = \begin{cases}
            \frac{1}{\sqrt{ab}} \log\Big( \frac{\sqrt{b(d-a)} + \sqrt{a(d-b)}}{\sqrt{b(d-a)} - \sqrt{a(d-b)}}\Big) &\textup{if } j=-1, 
            \smallskip 
            \\
            2 \log \Big(\frac{\sqrt{d-a}+\sqrt{d-b}}{\sqrt{b-a}}\Big)  &\textup{if } j=0, 
            \smallskip 
            \\
    \sqrt{(d-a) (d-b)}+(a+b) \log \Big(\frac{\sqrt{d-a}+\sqrt{d-b}}{\sqrt{b-a}}\Big) &\textup{if } j=1. 
        \end{cases}
    \end{equation}
\end{prop}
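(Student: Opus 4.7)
My plan is to evaluate the three integrals separately by elementary substitutions, treating the cases in increasing order of difficulty; no tools beyond standard calculus are needed.

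For $j=0$, I would use the antiderivative $\int ds/R(s) = 2\log(\sqrt{s-a}+\sqrt{s-b})+C$, which is verified by a one-line differentiation. Since $R(b)=0$ and $\sqrt{s-b}$ vanishes at $s=b$, the lower endpoint contributes $2\log\sqrt{b-a}$ and evaluation between $b$ and $d$ yields exactly the stated expression.

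For $j=1$, I would reduce to the previous case via the algebraic identity $s=\frac{1}{2}(2s-a-b)+\frac{a+b}{2}$. The first piece integrates to $R(s)$ since $2s-a-b=\frac{d}{ds}[(s-a)(s-b)]$, and the second piece is $\frac{a+b}{2}$ times the $j=0$ integral already computed. The contribution at the lower endpoint $s=b$ vanishes from both terms, yielding the closed form.

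The $j=-1$ case is the only genuinely nontrivial step. I would apply the Euler-type substitution
\[
\tau=\sqrt{\frac{s-b}{s-a}},
\]
which sends $s=b$ to $\tau=0$ and rationalises the integrand. A direct calculation gives $s=(b-a\tau^{2})/(1-\tau^{2})$, $R(s)=(b-a)\tau/(1-\tau^{2})$, and $ds=2(b-a)\tau/(1-\tau^{2})^{2}\,d\tau$, so that
\[
\frac{ds}{sR(s)} \;=\; \frac{2\,d\tau}{b-a\tau^{2}}.
\]
This is an elementary integral evaluating to $\frac{1}{\sqrt{ab}}\log\bigl((\sqrt{b}+\sqrt{a}\tau)/(\sqrt{b}-\sqrt{a}\tau)\bigr)+C$. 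Substituting $\tau=\sqrt{(d-b)/(d-a)}$ at the upper endpoint and multiplying numerator and denominator by $\sqrt{d-a}$ produces exactly the claimed expression.

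I do not expect any real obstacle; the only mildly clever step is identifying the right Euler substitution in the $j=-1$ case, after which the computation is mechanical. An alternative sanity check would be to differentiate the proposed antiderivative with respect to $d$ and match $1/(dR(d))$, using the identity $b(d-a)-a(d-b)=d(b-a)$ to simplify; this corroborates the formula but hides why the specific logarithmic combination appears, which is transparent from the substitution approach.
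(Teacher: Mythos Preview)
Your proposal is correct. The only substantive difference from the paper's proof is organisational: the paper applies the single Euler substitution $u=\sqrt{(s-b)/(s-a)}$ uniformly to all three integrals, reducing $j=0$ to $\int 2/(1-u^{2})\,du$ and $j=1$ to $\int 2(b-au^{2})/(1-u^{2})^{2}\,du$, whereas you use the substitution only for $j=-1$ and handle $j=0,1$ by exhibiting explicit antiderivatives (and, for $j=1$, the reduction $s=\tfrac12(2s-a-b)+\tfrac{a+b}{2}$). Your route is slightly more direct for $j=0,1$ since it avoids the partial-fractions work hidden in $\int (b-au^{2})/(1-u^{2})^{2}\,du$; the paper's route has the virtue of uniformity. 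For the case $j=-1$, which you correctly identify as the only nontrivial one, your computation is line-for-line the same as the paper's.
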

\begin{proof}
We first observe that the argument of the square root is positive. We then proceed by introducing the change of variables $u=\sqrt{(s-b)/(s-a)}$. Then by straightforward computations, it follows that 
\begin{align}
      \int_b^d \frac{1}{sR(s)}\,ds &= 2\int_0^{\sqrt{\frac{d-b}{d-a}}} \frac{1}{b-u^2a}\,du = \tfrac{1}{\sqrt{ab}} \log\Big( \tfrac{\sqrt{b(d-a)} + \sqrt{a(d-b)}}{\sqrt{b(d-a)} - \sqrt{a(d-b)}}\Big), 
      \\
    \int_b^d \frac{1}{R(s)}ds &= 2\int_0^{\sqrt{\frac{d-b}{d-a}}} \frac{1}{1-u^2}\, du =  2\log \Big(\tfrac{\sqrt{d-a}+\sqrt{d-b}}{\sqrt{b-a}}\Big),
    \\
      \int_b^d \frac{s}{R(s)} \,ds &= 2\int_0^{\sqrt{\frac{d-b}{d-a}}} \frac{b-au^2}{(1-u^2)^2}\, du = \sqrt{(d-a) (d-b)}+(a+b) \log \Big(\tfrac{\sqrt{d-a}+\sqrt{d-b}}{\sqrt{b-a}}\Big). 
\end{align}
This completes the proof. 
\end{proof}

\begin{prop} \label{Prop_int evaluation 3}
   Let $0<a<b \le d \le 1$. Then for $z\not\in \R$, we have 
\begin{align}
\begin{split}
\frac{1}{2\pi i}\int_a^b \frac{\log(1-s)}{R_+(s)s(z-s)} \, ds  &= -\frac{\log(1-z)}{2zR(z)} + \frac{1}{ 2 z R(z)}\log\bigg( \frac{ \sqrt{ z-b} + \sqrt{z-a} }{ \sqrt{ z-b} -\sqrt{ z-a} }\bigg) 
\\
&\quad - \frac{1}{2 zR(z)}\log\bigg( \frac{ \sqrt{ (1 -a)(z-b) } + \sqrt{ (1-b)(z-a)} }{  \sqrt{(1-a)(z-b)} -\sqrt{ (1-b)(z-a)} }\bigg) 
\\
& \quad  - \frac{1}{z\sqrt{ab}} \log\bigg( \frac{\sqrt{b(1-a)} +\sqrt{a(1-b)}}{\sqrt{b} +\sqrt{a} }\bigg). 
\end{split}
\end{align} 
\end{prop}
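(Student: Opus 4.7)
The plan is to adapt the contour-deformation argument of Proposition~\ref{Prop_int evaluation 1}, now accounting for the additional branch cut of $\log(1-s)$ which I take along $[1,\infty)$. Using the jump relation $R_-(s)=-R_+(s)$ on $(a,b)$, the left-hand side rewrites as
\[
-\frac{1}{4\pi i}\oint_{\gamma_0}\frac{\log(1-s)}{sR(s)(z-s)}\,ds,
\]
where $\gamma_0$ is a small positively oriented loop around $[a,b]$.

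Next, I would expand $\gamma_0$ to a circle $\gamma_\Lambda$ of radius $\Lambda\to\infty$. Since the integrand decays like $\log|s|/s^3$ at infinity, the contribution of $\gamma_\Lambda$ vanishes, and one instead collects the simple poles at $s=0$ and $s=z$ together with a keyhole contour $\gamma_1$ wrapped around $[1,\infty)$, as shown on the right of Figure~\ref{fig:deformation}. The residue at $s=0$ vanishes since $\log 1=0$, while the residue at $s=z$ produces the isolated term $-\tfrac{\log(1-z)}{2zR(z)}$. Using the jump $\log(1-s)_--\log(1-s)_+=2\pi i$ across $[1,\infty)$, the keyhole contribution reduces to $\tfrac{1}{2}\int_1^\infty\tfrac{ds}{sR(s)(z-s)}$, so the proof reduces to evaluating this one real integral.

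For the remaining integral I would apply partial fractions $\tfrac{1}{s(z-s)}=\tfrac{1}{z}\bigl(\tfrac{1}{s}+\tfrac{1}{z-s}\bigr)$, yielding a sum of two integrals of the form $\int_1^\infty\tfrac{ds}{(s-\alpha)R(s)}$ with $\alpha=0$ and $\alpha=z$. Both are evaluated by the rationalising substitution $u=\sqrt{(s-b)/(s-a)}$ already employed in Proposition~\ref{Prop_int evaluation 2}, which converts each into a standard rational integral of the type $\int\tfrac{du}{\alpha-\beta u^2}$ with antiderivative a single logarithm. Evaluating between the bounds $u=\sqrt{(1-b)/(1-a)}$ (corresponding to $s=1$) and $u=1$ (corresponding to $s=\infty$) then produces exactly the three logarithmic contributions in the statement, and the $\alpha=0$ integral in particular reproduces the evaluation of $J(0)$ in~\eqref{integral eval of J}.

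The main obstacle will be the consistent branch choice for $\sqrt{z-a}$, $\sqrt{z-b}$, and $R(z)=\sqrt{(z-a)(z-b)}\sim z$ when $z\in\mathbb{C}\setminus\mathbb{R}$: the substitution is carried out along the real axis, but the antiderivative involves $\sqrt{(z-a)/(z-b)}$, which must be reconciled with the prescribed analytic branch of $R(z)$. I would handle this by first performing the computation for real $z\in(0,a)$, where every quantity is unambiguously positive and the resulting expression can be matched directly with the symmetric form $\log\tfrac{\sqrt{z-b}+\sqrt{z-a}}{\sqrt{z-b}-\sqrt{z-a}}$ appearing in the statement (via the identity $\log\tfrac{A+B}{A-B}=2\log(A+B)-\log(A^2-B^2)$ applied with $A^2-B^2=a-b$, and analogously for the mixed pair with $A^2-B^2=-(b-a)(1-z)$), and then invoking analytic continuation in $z$ to extend the identity to $\mathbb{C}\setminus\mathbb{R}$. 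Aside from this branch bookkeeping, the remaining manipulations are routine.
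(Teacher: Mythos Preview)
Your proposal is correct and follows essentially the same route as the paper's own proof: both deform the loop around $[a,b]$ outward, pick up the residues at $s=0$ and $s=z$ together with the keyhole contribution along $[1,\infty)$, and then evaluate the resulting integrals $\int_1^\infty \frac{ds}{(s-\alpha)R(s)}$ via the Euler substitution $u=\sqrt{(s-b)/(s-a)}$, exactly as in Proposition~\ref{Prop_int evaluation 2} and the evaluation of $J(x)$ in~\eqref{integral eval of J}. Your treatment is in fact slightly more explicit than the paper's, which simply asserts that ``the remaining integral can now be evaluated using a standard Euler substitution''; your plan to first carry out the computation for real $z\in(0,a)$ and then invoke analytic continuation, together with the rewriting via $\log\frac{A+B}{A-B}=2\log(A+B)-\log(A^2-B^2)$, is a clean way to handle the branch bookkeeping that the paper leaves implicit.
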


\begin{proof}
    As in the proof of Proposition~\ref{Prop_int evaluation 1}, we have 
    \begin{equation}
        \frac{1}{2\pi i}\int_a^b \frac{\log(1-s)}{R_+(s)s(z-s)} \, ds  = \frac{1}{4\pi i}\int_\gamma \frac{\log(1-s)}{R(s)s(z-s)}\,ds\,,
    \end{equation}
    where $\gamma$ is the contour in Figure \ref{fig:deformation} (B). By taking the radius $R\to\infty$, we deduce that
    \begin{equation}
    \begin{split}
         \frac{1}{2\pi i}\int_a^b \frac{\log(1-s)}{R_+(s)s(z-s)} \, ds &  = \frac{1}{2} \bigg(\Res_{s=0} \frac{\log(1-s)}{R(s)s(z-s)} + \Res_{s=z} \frac{\log(1-s)}{R(s)s(z-s)} - \int_1^\infty\frac{1}{s(z-s)R(s)} \, ds \bigg)
         \\
         & = -\frac{\log(1-z)}{2zR(z)}+ \frac{1}{2z} \bigg(\int_1^{\infty} \frac{1}{R(s)(z-s)}\, ds -  \int_1^{\infty} \frac{1}{R(s)s}\,ds\bigg). 
    \end{split}
    \end{equation}
The remaining integral can now be evaluated using a standard Euler substitution, yielding the desired identity. 
\end{proof}
 
%%%%%%%%%%bibliography%%%%%%%%%%%%%%%%%%%%%%%%%%%%%%%%%%%

\bibliographystyle{abbrv}

\end{document}